\newcommand{\qe}{}
\numberwithin{equation}{section}
\newtheorem{Thm}{Theorem}[section]{\bf}{\it}
{\bf}{\it}
\newtheorem{Lem}[Thm]{Lemma}{\bf}{\it}
{\bf}{\it}
{\bf}{\it}
{\bf}{\it}
\newcommand{\1}{\mathbf{1}}
\newcommand{\R}{\mathbb{R}}
\newcommand{\Z}{\mathbb{Z}}
\renewcommand{\P}{\mathbf{P}}
\renewcommand{\Re}{\text{Re}}
\newcommand{\F}{\mathscr{F}}
\newcommand{\C}{\mathbb{C}}
\newcommand{\E}{\mathcal{E}}
\newcommand{\D}{\mathcal{D}}
\newcommand{\ve}{\varepsilon}
\newcommand{\vt}{\vartheta}
\newcommand{\ga}{\gamma}
\newcommand{\pa}{\partial}
\newcommand{\na}{\nabla}
\newcommand{\wh}{\widehat}
\renewcommand{\Re}{\text{Re}}
\newcommand{\<}{\langle}
\renewcommand{\>}{\rangle}
\newcommand{\T}{\mathbb{T}}
\newcommand{\I}{\mathbf{I}}
\newcommand{\II}{\mathbf{I}_{\pm}}
\renewcommand{\P}{\mathbf{P}}
\newcommand{\PP}{\mathbf{P}_{\pm}}
\newcommand{\vertiii}[1]{{\left\vert\kern-0.25ex\left\vert\kern-0.25ex\left\vert #1 \right\vert\kern-0.25ex\right\vert\kern-0.25ex\right\vert}}
\begin{document}
	
	\title[Vlasov-Poisson-Landau/Boltzmann system on torus and finite channel]{Low Regularity Solutions for the Vlasov-Poisson-Landau/Boltzmann System}
	
	\author[D.-Q. Deng]{Dingqun Deng}
	\address[D.-Q. Deng]{Beijing Institute of Mathematical Sciences and Applications and Yau Mathematical Science Center, Tsinghua Univeristy, Beijing, People's Republic of China}
	\email{dingqun.deng@gmail.com}
	
	\author[R.-J. Duan]{Renjun Duan}
	\address[R.-J. Duan]{Department of Mathematics, The Chinese University of Hong Kong, Shatin, Hong Kong,
		People's Republic of China}
	\email{rjduan@math.cuhk.edu.hk}
	
	\begin{abstract}
		In the paper, we are concerned with the nonlinear Cauchy problem on the Vlasov-Poisson-Landau/Boltzmann system around global Maxwellians in torus or finite channel. The main goal is to establish the global existence and large time behavior of small amplitude solutions for a class of low regularity initial data. The molecular interaction type is restricted to the case of hard potentials for two classical collision operators because of the effect of the self-consistent forces. The result extends the one by Duan-Liu-Sakamoto-Strain [{\it Comm. Pure Appl. Math.} 74 (2021), no.~5, 932--1020] for the pure Landau/Boltzmann equation to the case of the VPL and VPB systems.  
	\end{abstract}

	\date{\today}
	\maketitle	
	\tableofcontents	
	
	\thispagestyle{empty}

	\section{Introduction}

	\subsection{Equations}
	
	We consider the Vlasov-Poisson-Boltzmann (VPB) and Vlasov-Poisson-Landau (VPL) systems describing the motion of plasma particles of two species (e.g. ions and electrons) in a domain $\Omega \subset \R^3$:
	\begin{equation}\label{F1}\left\{
		\begin{aligned}
			&\pa_tF_+ + v\cdot \na_xF_+ - \na_x\phi\cdot\na_vF_+ = Q(F_+,F_+)+Q(F_-,F_+),\\
			&\pa_tF_- + v\cdot \na_xF_- + \na_x\phi\cdot\na_vF_- = Q(F_+,F_-)+Q(F_-,F_-),\\
			&-\Delta_x\phi = \int_{\R^3}(F_+-F_-)\,dv,\\ 
			& F_\pm(0,x,v)=F_{0,\pm}(x,v). 
		\end{aligned}\right. 
	\end{equation}
	Here, the unknowns $F_\pm(t,x,v)\ge 0$ stand for the velocity distribution functions for the particles of ions $(+)$ and electrons $(-)$, respectively, at position $x\in\Omega$ and velocity $v\in\R^3$ and time $t\ge 0$. For later use we introduce the self-consistent electrostatic field taking the form 
\begin{equation*}
E(t,x)= -\na_x\phi(t,x),
\end{equation*}
where $\phi(t,x)$ is the potential function given 
in terms of the Poisson equation.

	For the case of VPL system, the collision operator $Q$ is given by 
	\begin{align}\label{VPL}\notag
		Q(G,F)&=\nabla_v\cdot\int_{\R^3}\phi(v-v')\big[G(v')\nabla_vF(v)-F(v)\nabla_vG(v')\big]\,dv'\\
		&=\sum^3_{i,j=1}\partial_{v_i}\int_{\R^3}\phi^{ij}(v-v')\big[G(v')\partial_{v_j}F(v)-F(v)\partial_{v_j}G(v')\big]\,dv'.
	\end{align}
	The non-negative definite matrix-valued function $\phi=[\phi^{ij}(v)]_{1\leq i,j\leq 3}$ takes the form of 
	\begin{align*}
		\phi^{ij}(v) = \Big\{\delta_{ij}-\frac{v_iv_j}{|v|^2}\Big\}|v|^{\gamma+2},
	\end{align*}
	with $\gamma\ge -3$. It is convenient to call it {\em hard potential} when $\gamma\ge -2$ and {\em soft potential} when $-3\le\gamma<-2$. The case $\gamma=-3$ corresponds to the physically realistic Coulomb interactions; cf. \cite{Guo2002a}.

	For the case of VPB system, the collision operator $Q$ is defined by 
	\begin{align}
		\label{VPB}
		Q(G,F) = \int_{\R^3}\int_{\mathbb{S}^{2}} B(v-v_*,\sigma)\big[G(v'_*)F(v')-G(v_*)F(v)\big]\,d\sigma dv_*.
	\end{align}  
	Here, $v,v_*$ and $v',v'_*$ are velocity pairs given in terms of the $\sigma$-representation by
	\begin{align*}
		v'=\frac{v+v_*}{2}+\frac{|v-v_*|}{2}\sigma,\quad v'_*=\frac{v+v_*}{2}-\frac{|v-v_*|}{2}\sigma,\quad \sigma\in\mathbb{S}^2,
	\end{align*}
	that satisfy 
	$v+v_*=v'+v'_*$ and 
	$|v|^2+|v_*|^2=|v'|^2+|v'_*|^2$.
	The Boltzmann collision kernel $B(v-v_*,\sigma)$ depends only on $|v-v_*|$ and the deviation angle $\theta$ through $\cos\theta=\frac{v-v_*}{|v-v_*|}\cdot\sigma$. Without loss of generality we can assume $B(v-v_*,\sigma)$ is supported on $0\le\theta\le\pi/2$, since one can reduce the situation with {\it symmetrization}: $\overline{B}(v-v_*,\sigma)={B}(v-v_*,\sigma)+{B}(v-v_*,-\sigma)$. Moreover, 
	$B(v-v_*,\sigma) = |v-v_*|^\gamma b(\cos\theta)$,
	and we assume that there exist $C_b>0$ and $0<s<1$ such that  
	\begin{align*}
		\frac{1}{C_b\theta^{1+2s}}\le \sin\theta b(\cos\theta)\le \frac{C_b}{\theta^{1+2s}}, \quad\forall\,\theta\in (0,\frac{\pi}{2}].
	\end{align*} 
	It is convenient call it {\em hard potential} when $\gamma+2s\ge 0$ and {\em soft potential} when $-3<\gamma+2s<0$.

	
	\subsection{Reformulation}
We will reformulate the problem \eqref{F1} near a global Maxwellian. For this, consider the global Maxwellian equilibrium state:
	\begin{equation*}
		\mu = \mu(v) = (2\pi)^{-3/2}e^{-|v|^2/2}. 
	\end{equation*}
	We will look for a solution to \eqref{F1} of the form 
	\begin{equation*}
		F(t,x,v) = \mu + \mu^{1/2}f(t,x,v).
	\end{equation*}
	Then $f=f(t,x,v)$ satisfies 
	\begin{equation}\label{1}\left\{
		\begin{aligned}
			&\partial_tf_\pm + v\cdot\nabla_xf_\pm \pm \frac{1}{2}\nabla_x\phi\cdot vf_\pm  \mp\nabla_x\phi\cdot\nabla_vf_\pm \pm \nabla_x\phi\cdot v\mu^{1/2} - L_\pm f = \Gamma_{\pm}(f,f),\\
			&
			-\Delta_x\phi = \int_{\R^3}(f_+-f_-)\mu^{1/2}\,dv,\\
			&f(0,x,v) = f_0(x,v),
		\end{aligned}\right.
	\end{equation}
	where the linearized collision operator $L=[L_+,L_-]$ and nonlinear collision operator $\Gamma=[\Gamma_+,\Gamma_-]$ are given respectively by 
	\begin{equation*}
		L_\pm f = \mu^{-1/2}\Big\{2Q(\mu,\mu^{1/2}f_\pm)+Q(\mu^{1/2}(f_\pm+f_\mp),\mu)\Big\},
	\end{equation*}
	and 
	\begin{equation*}
		\Gamma_\pm(f,g) =  \mu^{-1/2}\Big\{Q(\mu^{1/2}f_\pm,\mu^{1/2}g_\pm)+Q(\mu^{1/2}f_\mp,\mu^{1/2}g_\pm)\Big\}.
	\end{equation*}
	The kernel of $L$ on $L^2_v\times L^2_v$ is the span of 
	$$
	\{[1,0]\mu^{1/2},[0,1]\mu^{1/2},[1,1]v\mu^{1/2},[1,1]|v|^2\mu^{1/2}\}
	$$ 
	and we define the projection of $L^2_v\times L^2_v$ onto $\ker L$ to be 
	\begin{equation*}
		\P f = \Big(a_+(t,x)[1,0]+a_-(t,x)[0,1]+v\cdot b(t,x)[1,1]+(|v|^2-3)c(t,x)[1,1]\Big)\mu^{1/2},
	\end{equation*}or equivalently by 
	\begin{equation*}
		\PP f = \Big(a_\pm(t,x)+v\cdot b(t,x)+(|v|^2-3)c(t,x)\Big)\mu^{1/2}.
	\end{equation*}
	Then for given $f$, one can decompose $f$ uniquely as 
	\begin{equation*}
		f = \P f+ (\I-\P)f. 
	\end{equation*}
	The function $a_\pm,b,c$ are given by 
	\begin{equation*}
		\left\{\begin{aligned}
			a_\pm &= (\mu^{1/2},f_\pm)_{L^2_v},\\
			b_j&= \frac{1}{2}(v_j\mu^{1/2},f_++f_-)_{L^2_v},\\
			c&=\frac{1}{12}((|v|^2-3)\mu^{1/2},f_++f_-)_{L^2_v}. 
		\end{aligned}\right.
	\end{equation*}

	\subsection{Spatial domain and boundary condition}
	In this paper, we focus on two kinds of specific domains $\Omega\subset\R^3$, either torus or finite channel. In what follows we give their definitions and the corresponding conservation laws to equation \eqref{1}. 
	
	\subsubsection{Case I: Torus} In this case, we let 
	\begin{equation*}
		\Omega=\T^3 = [-\pi,\pi]^3.
	\end{equation*}
	Correspondingly, $F(t,x,v)$ is assumed to be spatially periodic in $x$. We also assume $\int_{\T^3}\phi(t,x)dx=0$ for any $t\geq 0$. 
	It's well known that if the following   
	\begin{equation}\label{4}\left\{\begin{aligned}
			&\int_{\T^3}\int_{\R^3}f_+(t)\mu^{1/2}\,dvdx = \int_{\T^3}\int_{\R^3}f_-(t)\mu^{1/2}\,dvdx =0,\\
			&\int_{\T^3}\int_{\R^3}(f_+(t)+f_-(t))v\mu^{1/2}\,dvdx =0,\\
			&\int_{\T^3}\int_{\R^3}(f_+(t)+f_-(t))|v|^2\mu^{1/2}\,dvdx +\int_{\T^3}|E(t)|^2\,dx=0,
		\end{aligned}\right.
	\end{equation}
holds initially at $t=0$, then we have conservation laws \eqref{4} for the solution $f(t,x,v)$ for any $t\ge 0$.

	\subsubsection{Case II: Finite channel} In this case, we set 
	\begin{equation*}
		\Omega=[-1,1]\times\T^2=\{x=(x_1,\bar{x}):\ x_1\in[-1,1],\ \bar{x}:=(x_2,x_3)\in \T^2=[-\pi,\pi]^2\}.
	\end{equation*}
	Correspondingly, $f(t,x,v)$ is assumed to be spatially periodic for $\bar{x}$ and satisfies the following {\it specular reflection} boundary condition at $x_1=\pm 1$:
	\begin{equation}\label{specular_finite}\begin{aligned}
			{f}(t,-1,\bar{x},v_1,\bar{v})|_{v_1>0} &= {f}(t,-1,\bar{x},-v_1,\bar{v}),\\
			{f}(t,1,\bar{x},v_1,\bar{v})|_{v_1<0} &= {f}(t,1,\bar{x},-v_1,\bar{v}),
		\end{aligned}
	\end{equation}
	where $v=(v_1,\bar{v})\in\R^3$. For the case of finite channel, we further assume 
	\begin{equation}
		\label{Neumann_finite}
		\partial_{x_1}\phi = 0,\ \text{ on } x_1=\pm 1,
	\end{equation}
	for Poisson equation to $\phi$. 
	Due to conservation laws similar to the torus case, we assume that $f(t,x,v)$  satisfies 
	\begin{equation}\label{conservation_finite}\left\{\begin{aligned}
&\int_{[-1,1]\times\T^2}\int_{\R^3}f_+(t)\mu^{1/2}\,dvdx = \int_{[-1,1]\times\T^2}\int_{\R^3}f_-(t)\mu^{1/2}\,dvdx =0,\\			&\int_{[-1,1]\times\T^2}\int_{\R^3}(f_+(t)+f_-(t))v_i\mu^{1/2}\,dvdx \equiv0,\quad i=2,3,\\
		&\int_{[-1,1]\times\T^2}\int_{\R^3}(f_+(t)+f_-(t))|v|^2\mu^{1/2}\,dvdx +\int_{[-1,1]\times\T^2}|E(t)|^2\,dx=0,
		\end{aligned}\right.
	\end{equation}
with all $t\ge 0$.
	
	\medskip
	
	This work concerns the low regularity global solution and large time asymptotic behavior for the VPB and VPL systems in torus or finite channel as above. 
	The high regularity solution for Boltzmann and Landau equations near a global Maxwellian has been well-studied in the last few decades. For the cutoff Boltzmann case, Caflisch \cite{Caflisch1980,Caflisch1980a} and Ukai-Asano \cite{Ukai1982} gave the global solution to Cauchy problem in torus. For the non-cutoff Boltzmann case, we refer to the work AMUXY \cite{Alexandre2012} and Gressman-Strain \cite{Gressman2011}. For Landau case, we refer to Guo \cite{Guo2002a}. If the Boltzmann and Landau equations are combined with the self-consistent electrical or electromagnetic fields, that is, the Poisson and Maxwell equations, we refer to \cite{Mischler2000,Guo2002,Guo2012,Strain2013,Guo2003a,Duan2011,Duan2013,Wang2015,Strain2006}. 
	For the low regularity solution, one may go back to the classic work by DiPerna-Lions \cite{Diperna1989}, which constructed the renormalized solution by using weak compactness method in $L^1$ framework. For the framework near a global Maxwellian, Duan-Huang-Wang-Yang \cite{Duan2017a} gave the global well-posedness of cutoff Boltzmann equation with a class of large amplitude data and Duan-Wang \cite{Duan2019b} generalized it to the case of general bounded domains with diffusive reflection boundaries. Recently, for the non-cutoff Boltzmann and Landau equations, Duan-Liu-Sakamoto-Strain \cite{Duan2020} studied the global mild solutions for small-amplitude initial data in space $L^1_kL^2_v$ with very low regularity. In this work, we expect to generalize this result to the case of the VPB or VPL system. Also, we consider the VPB and VPL system in finite channel with the physically important specular-reflection boundary condition. 
	
	For the boundary value theory of collisional kinetic problems such as on Landau and Boltzmann equations, we refer to \cite{Cercignani1992,Hamdache1992, Mischler2000,Yang2005, Liu2006,Guo2009, Esposito2013, Guo2016,Kim2017,Cao2019, Guo2020,Dong2020}. 
	Since the fundamental work by Guo \cite{Guo2009} using an $L^2-L^\infty$ method, many results have been developed for Boltzmann equation and Landau equation. For instance, Guo-Kim-Tonon-Trescases \cite{Guo2016} gave regularity of cutoff Boltzmann equation with several physical boundary conditions. Esposito-Guo-Kim-Marra \cite{Esposito2013} constructed a non-equilibrium stationary solution and studied the exponential asymptotic stability. Kim-Lee \cite{Kim2017} studied cutoff Boltzmann equation with specular boundary condition with external potential. Liu-Yang \cite{Liu2016} extended the result in \cite{Guo2009} to cutoff soft potential case. Cao-Kim-Lee \cite{Cao2019} proved the global existence for Vlasov-Poisson-Boltzmann with diffuse boundary condition. Guo-Hwang-Jang-Ouyang \cite{Guo2020} gave the global stability of Landau equation with specular reflection boundary. Duan-Liu-Sakamoto-Strain \cite{Duan2020} proved the global existence for Landau and non-cutoff Boltzmann equation in finite channel. Dong-Guo-Ouyang \cite{Dong2020} established the global existence for VPL system in general bounded domain with specular boundary condition. 
	
	In this work, we would give the global well-posedness of VPB and VPL systems in torus and finite channel in a function space with very low regularity, related to the Weiner space $A(\T^3)=L^1_k$  given in \eqref{L1k}. We shall establish the global existence and exponential time decay for such low regularity initial data in the case of hard potentials.

	\subsection{Notations}
	We first give some notations throughout the paper. 
	Let  $I$ be the identity mapping. Set $\<v\>=\sqrt{1+|v|^2}$. $\1_{S}$ is the indicator function on a set $S$. $(\cdot|\cdot)$ denotes the inner product in $\C$. Let
	$\partial^\alpha_\beta = \partial^{\alpha_1}_{x_1}\partial^{\alpha_2}_{x_2}\partial^{\alpha_3}_{x_3}\partial^{\beta_1}_{v_1}\partial^{\beta_2}_{v_2}\partial^{\beta_3}_{v_3}$,
	where $\alpha=(\alpha_1,\alpha_2,\alpha_3)$ and $\beta=(\beta_1,\beta_2,\beta_3)$ are multi-indices. If each component of $\beta'$ is not greater than that of $\beta$'s, we denote by $\beta'\le\beta$.
	The notation $a\approx b$ (resp. $a\gtrsim b$, $a\lesssim b$) for positive real function $a$ and $b$ means that there exists $C>0$ not depending on possible free parameters such that $C^{-1}a\le b\le Ca$ (resp. $a\ge C^{-1}b$, $a\le Cb$) on their domain.
	We will write $C>0$ (large) to be a generic constant, which may change from line to line. Denote the $L^2_v$ and $L^2_{x,v}$, respectively, as 
	\begin{align*}
		|f|^2_{L^2_v} = \int_{\R^3}|f|^2\,dv,\quad \|f\|_{L^2_{x,v}}^2 = \int_{\Omega}|f|^2_{L^2_v}\,dx.
	\end{align*}
	For any $m\ge 0$, we denote spaces $L^1_k$, $L^1_{\bar{k}}$ and their weighted forms $L^1_{k,m}$, $L^1_{\bar{k},m}$ respectively by 
	\begin{align}\label{L1k}
		\|\wh{f}\|^2_{L^1_{k}} = \int_{\Z^3}|\widehat{f}(k)|\,dk,& \qquad
		\|\widehat{f}\|_{L^1_{\bar{k}}}=\int_{\Z^2}|\widehat{f}(k)|\,d\bar{k},\\
		\|\wh{f}\|^2_{L^1_{k,m}} = \int_{\Z^3}\<k\>^m|\widehat{f}(k)|\,dk,& \qquad
		\|\widehat{f}\|_{L^1_{\bar{k},m}}=\int_{\Z^2}\<\bar{k}\>^m|\widehat{f}(k)|\,d\bar{k}. \notag
	\end{align}

	To capture the dissipation rate, for Landau case, we denote
	\begin{align*}
		\sigma^{ij}(v) &= \phi^{ij}*\mu = \int_{\R^3}\phi^{ij}(v-v')\mu(v')\,dv',\\
		\sigma^i(v)&=\sum_{j=1}^3\sigma^{ij}\frac{v_j}{2}=\sum_{j=1}^3\phi^{ij}*\big[\frac{v_j}{2}\mu\big].
	\end{align*}
	Define
	\begin{align*}
		|f|^2_{L^2_{D,w}}=\sum^3_{i,j=1}\int_{\R^3}w^2\big(\sigma^{ij}\partial_{v_i}f\partial_{v_j}f+\sigma^{ij}\frac{v_i}{2}\frac{v_j}{2}|f|^2\big)dv,\quad \|f\|^2_{L^2_xL^2_{D,w}}=\int_\Omega|f|_{L^2_{D,w}}^2\,dx,
	\end{align*}
	and $|\widehat{f}(k)|^2_{L^2_{D}}=|\widehat{f}(k)|^2_{L^2_{D,1}}$.
	Then from \cite[Corollary 1, p.399]{Guo2002a}, we have 
	\begin{align*}
		|{f}|^2_{L^2_{D,w}} &= |w\<v\>^{\frac{\gamma}{2}}P_v\nabla_v f|_{L^2_v}^2 + |w\<v\>^{\frac{\gamma+2}{2}}\{I-P_v\}\nabla_vf|_{L^2_v}^2 + |w\<v\>^{\frac{\gamma+2}{2}}{f}|_{L^2_v}^2,
	\end{align*}
where $P_v$ is the projection along the direction of $v$.

	For Boltzmann case, as in \cite{Gressman2011}, we denote
	\begin{equation*}
		|f|^2_{L^2_D}:=|\<v\>^{\frac{\gamma+2s}{2}}f|^2_{L^2_v}+ \int_{\R^3}dv\,\<v\>^{\gamma+2s+1}\int_{\R^3}dv'\,\frac{(f'-f)^2}{d(v,v')^{3+2s}}\1_{d(v,v')\le 1},
	\end{equation*}
	and 
	\begin{equation*}
		|f|^2_{L^2_{D,w}}=|wf|^2_{L^2_D},\quad \|f\|^2_{L^2_xL^2_{D,w}} = \int_{\Omega}|f|_{L^2_{D,w}}^2\,dx.
	\end{equation*}
	The fractional differentiation effect is measured using the anisotropic metric on the {\it lifted} paraboloid
	$d(v,v'):=\{|v-v'|^2+\frac{1}{4}(|v|^2-|v'|^2)^2\}^{1/2}$.
	%
	Then by \cite[Proposition 2.2]{Alexandre2012}, we have 
	\begin{align*}
		|\<v\>^{\frac{\gamma}{2}}\<D_v\>^sf|^2_{L^2_v}+|\<v\>^{\frac{\gamma+2s}{2}}f|^2_{L^2_v}\lesssim |f|^2_{L^2_D}\lesssim |\<v\>^{\frac{\gamma+2s}{2}}\<D_v\>^sf|^2_{L^2_v}.
	\end{align*}
	
	\subsection{Function space} 
	
	To study the global well-posedness of problem \eqref{1} in different domains, we will consider the following function spaces and energy functionals. We will consider 
	\begin{equation*}\begin{aligned}
			&\gamma\ge -2, \qquad\qquad\qquad\quad\ \text{ for VPL case},\\&\gamma+2s\ge 1,\ \frac{1}{2}\le s<1, \quad\text{ for VPB case}.
		\end{aligned}
	\end{equation*} We will make use of weight function:
	\begin{align}
		w &=w(t,v):= \exp\Big({\frac{q\<v\>^\vt}{(1+t)^N}}\Big),\label{w2}
	\end{align}
	where $q\ge 0$ and we restrict $0<q<1/8$ when $\vt=2$. We let $\vt = -\gamma$ for VPL case when $-2\le \gamma<-1$ and $q = 0$ when $\gamma\ge -1$ for VPL case. For the VPB case, we let $q=0$ when $\gamma+2s\ge 1$ and $\frac{1}{2}\le s<1$. Therefore, such weight $w$ in \eqref{w2} will be necessarily included only for the VPL case with $-2\leq \ga<-1$ giving that $\vt\in[1,2]$.  
	
	\subsubsection{Case I: Torus}
	For the torus case, to derive the low regularity solution, as in \cite{Duan2020}, we will consider the Weiner space $L^1_k=A(\T^3)$, where $k$ is the Fourier variable with respect to $x$. In order to close the energy estimate with time integral, we consider   
	\begin{equation*}
		L^1_kL^\infty_TL^2_v,
	\end{equation*}
	with norm 
	\begin{equation*}
		\|\wh{f}\|_{L^1_kL^\infty_TL^2_v}:=\int_{\Z^3}\sup_{0\le t\le T}|\wh{f}(t,k,\cdot)|_{L^2_v}\,d\Sigma(k)<\infty,
	\end{equation*}
	where the Fourier transform of $f(t,x,v)$ with respect to $x\in\T^3$ is denoted by 
	\begin{equation*}
		\wh{f}(t,k,v) = \int_{\T^3}e^{-ik\cdot x}f(t,x,v)\,dx,\quad k\in\Z^3.
	\end{equation*}
	Here we denote $d\Sigma(k)$ to be the discrete measure on $\Z^3$, namely, $\int_{\Z^3}g(k)\,d\Sigma(k)=\sum_{k\in\Z^3}g(k)$. 
	To obtain the global existence of \eqref{1}, we will denote the ``total energy functional" $\E_T$ and ``dissipation rate functional" $\D_T$ respectively by 
	\begin{equation}\label{defe}
		\E_T = \|e^{\delta t}\widehat{f}\|_{L^1_kL^\infty_TL^2_v}+\|e^{\delta t}\widehat{E}\|_{L^1_kL^\infty_T},
	\end{equation}
	and
	\begin{equation}\label{defd}
		\D_T = \|e^{\delta t}\widehat{f}\|_{L^1_kL^2_TL^2_{D}}+\|e^{\delta t}\widehat{E}\|_{L^1_kL^2_T},
	\end{equation}
for any $T>0$ and some small constant $\delta>0$ to be chosen in Theorem \ref{Main}. 
	Their weighted form $\E_{T,w}$ and $\D_{T,w}$ are given by 
	\begin{equation}\label{ETw}
		\E_{T,w} := \|e^{\delta t}w\widehat{f}\|_{L^1_kL^\infty_TL^2_v},
	\end{equation}
	and
	\begin{equation}\label{DTw}
		\D_{T,w} := \|e^{\delta t}\widehat{f}\|_{L^1_kL^2_TL^2_{D,w}}+\sqrt{qN}\|{\<v\>^{\frac{\vt}{2}}}{(1+t)^{-\frac{N+1}{2}}}e^{\delta t}w\widehat{f}\|_{L^1_kL^2_TL^2_v},
	\end{equation}
where $w$ is given in \eqref{w2}.

	\subsubsection{ Case II: Finite channel} 
	For finite channel case, we will also use weight function \eqref{w2} and define function space 
	\begin{equation*}
		L^1_{\bar{k}}L^\infty_TL^2_{x_1,v},
	\end{equation*}
	with norm 
	\begin{equation*}
		\|\wh{f}\|_{L^1_{\bar{k}}L^\infty_TL^2_{x_1,v}}:=\int_{\Z^2}\sup_{0\le t\le T}|\wh{f}(t,x_1,\bar{k},v)|_{L^2_{x_1,v}}\,d\Sigma(\bar{k})<\infty.
	\end{equation*}
	where the Fourier transform of $f(t,x_1,\bar{x},v)$ with respect to $\bar{x}\in\T^2$ is denoted by 
	\begin{equation*}
		\wh{f}(t,x_1,\bar{k},v) = \int_{\T^2}e^{-i\bar{k}\cdot \bar{x}}f(t,x_1,\bar{x},v)\,d\bar{x},\quad \bar{k}\in\Z^2.
	\end{equation*}
	In the case of finite channel, we need to include an extra first-order derivative in $x$ and thus we define the ``total energy functional" $\E_T$ and ``dissipation rate functional" $\D_T$ respectively by 
	\begin{equation}\label{141}
		\E_T = \sum_{|\alpha|\le 1}\Big(\|e^{\delta t}\widehat{\partial^\alpha f}\|_{L^1_{\bar{k}}L^\infty_TL^2_{x_1,v}} +\|e^{\delta t}\wh{\partial^\alpha E}\|_{L^1_{\bar{k}}L^\infty_TL^2_{x_1}}\Big),
	\end{equation}
	and
	\begin{equation}\label{142}
		\D_T = \sum_{|\alpha|\le 1}\Big(\|e^{\delta t}\widehat{\partial^\alpha f}\|_{L^1_{\bar{k}}L^2_TL^2_{x_1}L^2_{D}}+\|e^{\delta t}\widehat{\partial^\alpha E}\|_{L^1_{\bar{k}}L^2_TL^2_{x_1}}\Big).
	\end{equation}
Here, in the $x_1$ direction, we use $L^2_{x_1}$ space instead of $L^1_{k_1}$. 
	Moreover, their weighted forms are denoted respectively by 
	\begin{equation}
		\label{143}\E_{T,w} = \sum_{|\alpha|\le 1}\|e^{\delta t}w\widehat{\partial^\alpha f}\|_{L^1_{\bar{k}}L^\infty_TL^2_{x_1,v}},
	\end{equation}
	and
	\begin{equation}
		\label{144}\D_{T,w} = \sum_{|\alpha|\le 1}\Big(\sqrt{qN}\|{\<v\>^{\frac{\vt}{2}}}{(1+t)^{-\frac{N+1}{2}}}e^{\delta t}w\widehat{\partial^\alpha f}\|_{L^1_kL^2_TL^2_v} +\|e^{\delta t}\widehat{\partial^\alpha f}\|_{L^1_{\bar{k}}L^2_TL^2_{x_1}L^2_{D,w}}\Big).
	\end{equation}

	\subsection{Main results}
	
	In this section, we state our main results on global well-posedness of VPL and VPB systems. 
	We will consider $\gamma\ge -2$ for VPL system and $\gamma+2s\ge 1$, $1/2\le s<1$ for VPB system for the case of either torus or finite channel,

	\subsubsection{Case of torus.} Here we state our main result on the VPL and VPB systems in torus. 
	In this subsection, we denote $\widehat{\cdot}$ to be the Fourier transform on $x\in\T^3$. 
	
	\begin{Thm}[Existence and large-time behavior]
		\label{Main}Let $\Omega=\T^3$ and $w$ be chosen by \eqref{w2}. Assume that $f_0(x,v)$ satisfies \eqref{4} with $t=0$. Then the followings hold. 
		
		\medskip
		\noindent (1) For the VPL system with $-2\le\gamma< -1$, there exist $\varepsilon_0,\delta>0$ such that if $F_0(x,v)=\mu+\mu^{1/2}f_0(x,v)\ge 0$ and 
		\begin{align*}
			\|\widehat{wf_0}\|_{L^1_kL^2_v}+\|\widehat{E_0}\|_{L^1_k}\le \varepsilon_0,
		\end{align*}
		then there exists a unique global mild solution $f=f(t,x,v)$ to the problem \eqref{1} and \eqref{VPL} satisfying that $F(t,x,v)=\mu+\mu^{1/2}f(t,x,v)\ge 0$ and for any $T>0$, 
		\begin{equation}\label{energytorus}
			\E_T+\D_T+\E_{T,w}+\D_{T,w}\le \|\widehat{wf_0}\|_{L^1_kL^2_v}+\|\widehat{E_0}\|_{L^1_k},
		\end{equation}
		where $\E_T$, $\D_T$, $\E_{T,w}$, $\D_{T,w}$ are defined in \eqref{defe}, \eqref{defd}, \eqref{ETw} and \eqref{DTw} respectively.
		In particular, one has the rate of convergence: 
		\begin{align*}
			\|\widehat{wf}(t)\|_{L^1_kL^2_v} + \|\widehat{E}(t)\|_{L^1_k}\lesssim e^{-\delta t}\big(\|\widehat{wf_0}\|_{L^1_kL^2_v}+\|\widehat{E_0}\|_{L^1_k}\big).
		\end{align*}

		\medskip
		\noindent (2)
		For the VPL system with $\gamma\ge -1$ and VPB system with $\gamma+2s\ge 1$ and $\frac{1}{2}\le s<1$,  there exist $\varepsilon_0,\delta>0$ such that if $F_0(x,v)=\mu+\mu^{1/2}f_0(x,v)\ge 0$ and 
		\begin{align*}
			\|\widehat{f_0}\|_{L^1_kL^2_v}+\|\widehat{E_0}\|_{L^1_k}\le \varepsilon_0,
		\end{align*}
		then there exists a unique global mild solution $f(t,x,v)$ to the problem \eqref{1} satisfying \eqref{VPL} or \eqref{VPB} with $F(t,x,v)=\mu+\mu^{1/2}f(t,x,v)\ge 0$ and for any $T>0$, 
		\begin{align*}
			\E_T+\D_T\le \|\widehat{f_0}\|_{L^1_kL^2_v}+\|\widehat{E_0}\|_{L^1_k}.
		\end{align*}
		Moreover, one also has the rate of convergence: 
		\begin{align*}
			\|\widehat{f}(t)\|_{L^1_kL^2_v} + \|\widehat{E}(t)\|_{L^1_k}\lesssim e^{-\delta t}\big(\|\widehat{f_0}\|_{L^1_kL^2_v}+\|\widehat{E_0}\|_{L^1_k}\big).
		\end{align*}
	\end{Thm}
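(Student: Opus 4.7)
\medskip

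\noindent\textbf{Proof proposal for Theorem \ref{Main}.}

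The plan is to run a Fourier-in-$x$ energy scheme directly at the level of the mild formulation, closed by a bootstrap argument on the functional $\E_T+\D_T$ (and $\E_{T,w}+\D_{T,w}$ when the velocity weight is active). Taking the Fourier transform in $x\in\T^3$ turns \eqref{1} into a family of equations indexed by $k\in\Z^3$ with transport symbol $ik\cdot v$, linear operator $L$, force terms involving $\widehat E(k)=ik|k|^{-2}\widehat{a_+-a_-}(k)$ (for $k\neq 0$; the $k=0$ mode is killed by \eqref{4}), and quadratic nonlinearities that become convolutions in $k$. The decisive feature of the space $L^1_k$ is that the Wiener algebra property $\|\widehat{f}\ast_k\widehat{g}\|_{L^1_k}\le\|\widehat f\|_{L^1_k}\|\widehat g\|_{L^1_k}$ makes the nonlinear terms behave almost like products in the Fourier side, so that $\Gamma(f,f)$, $\na_x\phi\cdot v f$ and $\na_x\phi\cdot\na_v f$ can each be bounded, after integration in $k$ and $t$, by $\E_T\D_T$ using the $L^2_v$/$L^2_D$ collision estimates of Guo and Gressman--Strain and the elementary bound $|\widehat E(k)|\lesssim |k|^{-1}|\widehat{a_\pm}(k)|$.

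First I would derive, for each fixed $k$, the basic microscopic energy identity obtained by testing the Fourier-transformed equation against $\overline{\widehat f(t,k,\cdot)}$ in $L^2_v$. The coercivity $(Lf,f)_{L^2_v}\gtrsim |(\I-\P)f|_{L^2_D}^2$ gives dissipation of the microscopic part, while the troublesome cubic term $\frac12\na_x\phi\cdot v f$ produced by the self-consistent field is symmetric in $f$ and contributes zero to the $L^2_v$-norm; the linear source $\na_x\phi\cdot v\mu^{1/2}$ is absorbed using $|\widehat E(k)|\lesssim |\widehat{a_+-a_-}(k)|$. Taking square roots, multiplying by $e^{\delta t}$, integrating $dt$ in $\sup$ form and $L^2_t$ form, and then integrating $d\Sigma(k)$, yields
\begin{equation*}
\E_T+\D_T\lesssim \E_0+\|e^{\delta t}\widehat{\P f}\|_{L^1_kL^2_TL^2_v}+\E_T\D_T,
\end{equation*}
so that the main task reduces to recovering macroscopic dissipation of $(a_\pm,b,c)$ and of $E$.

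Next I would build an interaction (Kawashima-type) functional $\mathcal I(t,k)$, bilinear in $(\widehat f,\widehat E)$, which, when added to the basic energy with a small multiplier, produces the missing dissipation of $a_\pm$, $b$, $c$ and $E$. The scheme parallels the macroscopic argument used in \cite{Duan2011,Strain2006} but must be adapted to the $L^1_k$ framework: I would derive the fluid-type ODEs for $\widehat{a_\pm},\widehat b,\widehat c$ from \eqref{1} by testing against the basis $\{[1,0]\mu^{1/2},[0,1]\mu^{1/2},v\mu^{1/2}[1,1],(|v|^2-3)\mu^{1/2}[1,1]\}$, then construct test functions against $(\I-\P)\widehat f$ whose $L^2_v$ inner products reproduce $|\widehat{a_\pm}|^2,|\widehat b|^2,|\widehat c|^2$ up to controllable remainders. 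Crucially the Poisson equation gives $|\widehat E(k)|^2\gtrsim |\widehat{a_+-a_-}(k)|^2/|k|^2$, and together with the conservation laws \eqref{4} (which eliminate the otherwise dangerous zero mode) one obtains, after integration $d\Sigma(k)$, that $\|e^{\delta t}\widehat{\P f}\|_{L^1_kL^2_TL^2_v}$ and $\|e^{\delta t}\widehat E\|_{L^1_kL^2_T}$ are controlled by $\E_0+\E_T\D_T$. This step, together with appropriate choice of $\delta$ small relative to the spectral gap, closes the estimate in item (2).

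For item (1), where $-2\le\ga<-1$ and the weight $w(t,v)=\exp(q\<v\>^\vt/(1+t)^N)$ with $\vt=-\ga\in(1,2]$ is necessary, I would redo the basic $L^2_v$ energy for $w\widehat f$: the time derivative of $w$ generates the good negative term $-qN\<v\>^\vt(1+t)^{-N-1}w^2\widehat f$ on the left, which accounts for the extra piece $\sqrt{qN}\|\<v\>^{\vt/2}(1+t)^{-(N+1)/2}e^{\delta t}w\widehat f\|_{L^1_kL^2_TL^2_v}$ in $\D_{T,w}$ and compensates for the loss $\<v\>^{\ga+2}$ in $|\cdot|_{L^2_{D,w}}$ versus $\<v\>^{\ga+2s}$ thresholds. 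Here the commutator $[v\cdot\na_v,w]$ produces a factor $q\vt(1+t)^{-N}\<v\>^{\vt-1}$ that is harmless provided $0<q<1/8$ when $\vt=2$ so that the Guo-type weighted coercivity of $L$ is preserved. The weighted nonlinear estimates for $w\Gamma(f,f)$ use the standard trick $w(v)\lesssim w(v')w(v'_*)$ for Boltzmann-type splittings and its Landau analog, combined with the Wiener algebra bound. I expect the main obstacle to lie in this weighted macroscopic closure: the weight $w$ does not commute cleanly with $\P$, so one must carefully split $w\widehat{\P f}$ into $\P(w\widehat f)+[\text{small remainder}]$, and the extra force terms $\na_x\phi\cdot v wf$ must be absorbed by combining the dissipation in $\D_{T,w}$ with the dissipation of $\widehat E$ coming from the unweighted interaction functional.

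Once these a priori estimates are in hand, global existence, uniqueness and the exponential time-decay follow by a standard scheme: construct approximations by iterating the linear problem $\pa_t f^{n+1}+v\cdot\na_x f^{n+1}-L f^{n+1}=\Gamma(f^n,f^n)+(\text{force}^n)$, use the a priori bounds to propagate $\E_T+\D_T+\E_{T,w}+\D_{T,w}\lesssim \ve_0$ uniformly in $n$, show Cauchy in the same norm by estimating the difference (which satisfies an equation of the same linear type), and pass to the limit. The decay rate $e^{-\delta t}$ is a direct consequence of the $e^{\delta t}$ weight built into $\E_T,\D_T$. Positivity $F=\mu+\mu^{1/2}f\ge 0$ is obtained by an additional $L^\infty$ argument along characteristics, exploiting that the field $\na_x\phi$ is controlled in $L^1_kL^\infty_T\hookrightarrow L^\infty_{t,x}$ under the bound on $\E_T$.
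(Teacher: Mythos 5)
Your overall architecture (Fourier energy method in $L^1_k$, microscopic coercivity of $L$, a macroscopic/Kawashima-type closure via moment test functions, weighted estimates with the time-decaying weight for $-2\le\ga<-1$, iteration for existence) matches the paper's, but two steps as you describe them would not close. First, your treatment of the \emph{linear} field coupling $\pm\na_x\phi\cdot v\mu^{1/2}$ — ``absorbed using $|\widehat E(k)|\lesssim|\widehat{a_+-a_-}(k)|$'' — fails: after Cauchy–Schwarz this produces $O(1)\int_0^T|\P\widehat f(k)|^2dt$ (or $O(1)\int_0^T|\widehat E|^2dt$) on the right of the microscopic estimate, while the only available control of the macroscopic part and of $E$ comes from the macroscopic estimate, which must be added with a \emph{small} multiplier $\kappa$ so that its own $|(\I-\P)\widehat f|_{L^2_D}^2$ remainder is absorbed; no choice of the two parameters is compatible. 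The paper instead exploits the exact two-species cancellation: summing over $\pm$ and using the continuity and Poisson equations, $\sum_\pm\pm(\widehat{\na_x\phi}\cdot v\mu^{1/2},\widehat{f_\pm})_{L^2_v}=\widehat{\na_x\phi}\cdot\overline{\partial_t\widehat{\na_x\phi}}$ (see \eqref{33a}--\eqref{32c}), i.e.\ this term is a perfect time derivative of $\tfrac12|e^{\delta t}\widehat E|^2$; this is precisely why $\|e^{\delta t}\widehat E\|_{L^1_kL^\infty_T}$ sits inside $\E_T$ in \eqref{defe} and why the unweighted estimate closes (in the weighted estimate the crude bound is allowed only because that estimate enters with a small prefactor). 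Your proposal needs this structural identity, or an equivalent device, to be a proof.

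Second, your claim that the cubic term $\tfrac12\na_x\phi\cdot v f_\pm$ ``is symmetric in $f$ and contributes zero to the $L^2_v$-norm'' is false: pairing $\pm\tfrac12\na_x\phi\cdot vf_\pm\mp\na_x\phi\cdot\na_vf_\pm$ with $f_\pm$ leaves $\pm\tfrac12\int\na_x\phi\cdot v|f_\pm|^2dv$, which does not vanish (and in the $L^1_k$ framework it is a convolution in $k$ anyway, so no exact pairing is available). It must be estimated trilinearly with a $\<v\>^{1/2}$ loss, which is exactly where the restrictions $\ga\ge-1$ (or $\ga+2s\ge1$) enter, and where, for $-2\le\ga<-1$, the extra dissipation $\sqrt{qN}\|\<v\>^{\vt/2}(1+t)^{-(N+1)/2}e^{\delta t}w\widehat f\|_{L^1_kL^2_TL^2_v}$ in \eqref{DTw} is needed — your first paragraph gestures at this, but the stated cancellation contradicts it. A smaller slip: the conservation laws \eqref{4} do \emph{not} kill the zero mode of $\widehat c$; one has $\widehat c|_{k=0}=-\tfrac1{12}\int_{\T^3}|E|^2dx$, which has to be estimated quadratically as in \eqref{48} rather than discarded.
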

	Notice that the constants in the above estimates are independent of $T$. 
	For higher spatial regularity, we prove that the initial regularity preserves over time. 
	\begin{Thm}[Propagation of spatial regularity]\label{spatial_regurlarity}
		Let all conditions in Theorem \ref{Main} be satisfied and $T,m\ge 0$. 
		Then the followings hold. 
		
		\medskip\noindent (1)
		For the VPL systems with $-2\le\gamma< -1$, there exist $\varepsilon_0,\delta>0$ such that if
		\begin{align*}
			\|w\widehat{f_0}\|_{L^1_{k,m}L^2_v}+\|\widehat{E_0}\|_{L^1_{k,m}}\le \varepsilon_0,
		\end{align*}
		then the solution $f$ to \eqref{1} and \eqref{VPL} established in Theorem \ref{Main} satisfies 
		\begin{align}\label{regularity}
			\|e^{\delta t}w\widehat{f}\|_{L^1_{k,m}L^\infty_TL^2_v}+\|e^{\delta t}\widehat{E}\|_{L^1_{k,m}L^\infty_T}
			\lesssim \|\widehat{wf_0}\|_{L^1_{k,m}L^2_v}+\|\widehat{E_0}\|_{L^1_{k,m}}.
		\end{align}
		
		\medskip\noindent (2)
		For the VPL system with $\gamma\ge -1$ and VPB system with $\gamma+2s\ge 1$  and $\frac{1}{2}\le s<1$, there exist $\varepsilon_0,\delta>0$ such that if 
		\begin{align*}
			\|\widehat{f_0}\|_{L^1_{k,m}L^2_v}+\|\widehat{E_0}\|_{L^1_{k,m}}\le \varepsilon_0,
		\end{align*}
		then the solution $f$ to the problem \eqref{1} satisfying \eqref{VPL} or \eqref{VPB} established in Theorem \ref{Main} satisfies  
		\begin{align*}
			\|e^{\delta t}\widehat{f}\|_{L^1_{k,m}L^\infty_TL^2_v}+\|e^{\delta t}\widehat{E}\|_{L^1_{k,m}L^\infty_T} 
			\lesssim \|\widehat{f_0}\|_{L^1_{k,m}L^2_v}+\|\widehat{E_0}\|_{L^1_{k,m}}.
		\end{align*}
		
	\end{Thm}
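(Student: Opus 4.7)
The plan is to rerun the a priori estimates underlying Theorem \ref{Main} but with the extra Fourier weight $\<k\>^m$ (respectively $\<\bar k\>^m$ in the finite channel case) inserted in all relevant norms. Concretely, I would define a weighted energy and dissipation
\begin{equation*}
\E_T^{(m)} := \|e^{\delta t}\wh f\|_{L^1_{k,m}L^\infty_T L^2_v} + \|e^{\delta t}\wh E\|_{L^1_{k,m}L^\infty_T}, \qquad \D_T^{(m)} := \|e^{\delta t}\wh f\|_{L^1_{k,m}L^2_T L^2_D}+\|e^{\delta t}\wh E\|_{L^1_{k,m}L^2_T},
\end{equation*}
and their weighted-in-$v$ analogues $\E_{T,w}^{(m)}$, $\D_{T,w}^{(m)}$, and try to close
\begin{equation*}
\E_T^{(m)}+\D_T^{(m)}+\E_{T,w}^{(m)}+\D_{T,w}^{(m)}\;\lesssim\;\|\wh{w f_0}\|_{L^1_{k,m}L^2_v}+\|\wh{E_0}\|_{L^1_{k,m}}+\text{(nonlinear remainder)}.
\end{equation*}
Since the linearized collision operator $L$ and the velocity transport $v\cdot\nabla_x$, once Fourier-transformed in $x$, act at fixed $k$, the spectral weight $\<k\>^m$ commutes with the linear evolution and the linear estimates of Theorem \ref{Main} carry over verbatim after multiplying the mild formulation by $\<k\>^m$.

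The one genuine new input is the treatment of the nonlinear terms $\Gamma_\pm(f,f)$ and the cubic terms $\nabla_x\phi\cdot v f_\pm$ and $\nabla_x\phi\cdot\nabla_v f_\pm$, all of which are bilinear in $x$, hence convolutions in the Fourier variable. For these I invoke the elementary inequality
\begin{equation*}
\<k\>^m\;\le\;C_m\bigl(\<k-\ell\>^m+\<\ell\>^m\bigr),\qquad k,\ell\in\Z^3,
\end{equation*}
which, combined with Young's convolution inequality on $L^1(\Z^3)$, yields a symmetric estimate of the form
\begin{equation*}
\|\wh{\Gamma_\pm(f,g)}\|_{L^1_{k,m}L^2_T L^2_v}\;\lesssim\;\|\wh f\|_{L^1_{k}L^\infty_T L^2_v}\,\|\wh g\|_{L^1_{k,m}L^2_T L^2_D}+\|\wh f\|_{L^1_{k,m}L^\infty_T L^2_v}\,\|\wh g\|_{L^1_{k}L^2_T L^2_D},
\end{equation*}
and analogously for the field-times-distribution terms. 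The crucial point is that one of the two factors on the right is the \emph{unweighted} norm, which is already controlled by $\varepsilon_0$ thanks to Theorem \ref{Main}; the other carries the full weight $\<k\>^m$.

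With these bilinear bounds in hand, the closing argument is linear in the weighted quantities: writing $\mathcal N_T^{(m)}:=\E_T^{(m)}+\D_T^{(m)}+\E_{T,w}^{(m)}+\D_{T,w}^{(m)}$, one obtains
\begin{equation*}
\mathcal N_T^{(m)}\;\lesssim\;\|\wh{wf_0}\|_{L^1_{k,m}L^2_v}+\|\wh{E_0}\|_{L^1_{k,m}}+(\E_T+\D_T)\,\mathcal N_T^{(m)},
\end{equation*}
and absorbing the last term using $\E_T+\D_T\lesssim\varepsilon_0\ll 1$ (already guaranteed by Theorem \ref{Main}) delivers \eqref{regularity}. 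For the electric field, I exploit $\wh E(k)=-ik|k|^{-2}\int(\wh{f_+}-\wh{f_-})\mu^{1/2}dv$ on $\Z^3\setminus\{0\}$ so that $\<k\>^m|\wh E|\lesssim \<k\>^{m-1}\|\wh f\|_{L^2_v}$, which is in turn bounded by the $\<k\>^m$-weighted norm of $\wh f$. In the finite channel case the same strategy applies, using $\<\bar k\>^m$ in place of $\<k\>^m$ and adding, as in \eqref{141}--\eqref{144}, a single full $x$-derivative; since the specular boundary condition and the Neumann condition on $\phi$ are both preserved under differentiation in $\bar x$, the $\<\bar k\>^m$ weighting and the $\sum_{|\alpha|\le 1}\partial^\alpha$ are compatible and can be handled simultaneously.

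The main obstacle is the convolution in $k$ on the right-hand side: naively applying $\<k\>^m$ under the convolution would force one factor to be in $L^1_{k,m}$ and the other in $L^1_{k,m}$ as well, which is not small. The resolution is the splitting inequality above, which shifts the weight to exactly one factor; but to apply it one must verify that the bilinear estimates for $\Gamma_\pm$ (Landau and non-cutoff Boltzmann), for $\nabla_x\phi\cdot v\,f_\pm$, and for $\nabla_x\phi\cdot\nabla_v f_\pm$ proved in the course of Theorem \ref{Main} are in fact symmetric in their two arguments at the level of the $L^1_k L^2_v$/$L^1_k L^2_D$ pair—this is where one inspects the bilinear estimates from the previous section and checks that, e.g., the $\nabla_v$ landing on either factor can be redistributed using integration by parts in $v$ at fixed $k$, with weights controlled by $L^2_{D,w}$. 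Once this symmetry is recorded, everything else is bookkeeping and the proof follows that of Theorem \ref{Main} line by line.
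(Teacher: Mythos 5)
Your proposal is correct and follows essentially the same route as the paper: the paper also proves the theorem by rerunning the microscopic energy identity and the macroscopic (Theorem \ref{Thm21}-type) estimates with the multiplier $\<k\>^{2m}$ inserted in the $L^2_v$ pairings, exploiting exactly as you do that the weight acts at fixed $k$ and so commutes with the transport, the collision operator and the test-function construction, and then distributing the weight over the Fourier convolutions coming from $\nabla_x\phi\cdot vf_\pm$, $\nabla_x\phi\cdot\nabla_vf_\pm$ and $\Gamma_\pm(f,f)$. The one place you diverge is the splitting: you use $\<k\>^m\lesssim\<k-\ell\>^m+\<\ell\>^m$ so the weight lands on a single factor and the remaining unweighted factor is small by Theorem \ref{Main}, which makes the closing inequality linear in $\mathcal N_T^{(m)}$ and only needs $\E_T+\D_T\lesssim\varepsilon_0$; the paper instead keeps $L^1_{k,m}$ on \emph{both} factors (i.e.\ uses the algebra property $\<k\>^m\le\<k-\ell\>^m\<\ell\>^m$), obtains weighted trilinear bounds of the form $\|e^{\delta t}\wh{E}\|_{L^1_{k,m}L^\infty_T}\|\cdot\|_{L^1_{k,m}\cdots}$, and closes quadratically exactly as in \eqref{6.3}--\eqref{6.4}, which is why the hypothesis $\|w\wh{f_0}\|_{L^1_{k,m}L^2_v}+\|\wh{E_0}\|_{L^1_{k,m}}\le\varepsilon_0$ is invoked. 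Under the stated hypotheses both closings work; yours is marginally more flexible (it would only require the weighted data to be finite, not small). Two minor bookkeeping points: your displayed bound for $\Gamma_\pm$ in an $L^2_v$ norm should be read as a bound on the trilinear form tested against the solution in $L^2_{D,w}$ (that is what \eqref{35a} provides and what the energy estimate actually uses; a genuine $L^2_v$ bound on $\Gamma$ is not available), and the asserted symmetry you flag at the end is indeed already recorded in \eqref{35a}, so no new integration by parts in $v$ is needed.
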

	
	\subsubsection{Case of finite channel.}
	In this subsection, we give the main result on the finite channel. 
	In this case, we let $\widehat{\cdot}$ to be the Fourier transform on $\bar{x}\in\T^2$. 
	\begin{Thm}\label{Channel1}
		Let $\Omega = [-1,1]\times\T^2$ and $w$ be chosen by \eqref{w2}. Assume that $f_0(x,v)$ satisfies \eqref{conservation_finite} with $t=0$. Then the followings hold.

		\medskip\noindent (1)
		For the VPL system with $-2\le\gamma<-1$, there exist $\varepsilon_0,\delta>0$ such that if $F_0(x_1,\bar{x},v)=\mu+\mu^{1/2}f_0(x_1,\bar{x},v)\ge 0$ and 
		\begin{align}\label{1.23}
			\sum_{|\alpha|\le 1}\big(\|\widehat{w\partial^\alpha f_0}\|^2_{L^1_{\bar{k}}L^2_{x_1,v}}+\|\widehat{\partial^\alpha E_0}\|^2_{L^1_{\bar{k}}L^2_{x_1}}\big)\le \varepsilon_0,
		\end{align}
		then there exists a unique mild solution $f(t,x_1,\bar{x},v)$ to the initial boundary value problem with specular reflection boundary \eqref{1}, \eqref{specular_finite}, \eqref{Neumann_finite} and \eqref{VPL}, satisfying that $F(t,x_1,\bar{x},v)=\mu+\mu^{1/2}f(t,x_1,\bar{x},v)\ge 0$ and for any $T>0$, 
		\begin{align}\label{13}
			\E_{T,w}+\D_{T,w}+\E_T + \D_T &\lesssim \sum_{|\alpha|\le 1}\big(\|\widehat{w\partial^\alpha f_0}\|^2_{L^1_{\bar{k}}L^2_{x_1,v}}+\|\widehat{\partial^\alpha E_0}\|^2_{L^1_{\bar{k}}L^2_{x_1}}\big),
		\end{align}
		where $\E_T$, $\D_T$, $\E_{T,w}$, $\D_{T,w}$ are defined in \eqref{141}, \eqref{142}, \eqref{143} and \eqref{144}, respectively. 
		In particular, one has the rate of convergence:
		\begin{align*}
			\quad\,
			\sum_{|\alpha|\le 1}
			\Big(
			\|\widehat{w\partial^\alpha f}\|^2_{L^1_{\bar{k}}L^2_{x_1,v}}
			+\|\wh{\partial^\alpha E}\|_{L^1_{\bar{k}}L^2_{x_1}}
			\Big)
			\lesssim e^{-\delta t}\sum_{|\alpha|\le 1}\big(\|\widehat{w\partial^\alpha f_0}\|^2_{L^1_{\bar{k}}L^2_{x_1,v}}+\|\wh{\partial^\alpha E_0}\|^2_{L^1_{\bar{k}}L^2_{x_1}}\big).
		\end{align*}

		\medskip\noindent (2)
		For the VPL system with $\gamma\ge -1$ and VPB system with $\gamma+2s\ge 1$ and $\frac{1}{2}\le s<1$, there exist $\varepsilon_0,\delta>0$ such that if $F_0(x_1,\bar{x},v)=\mu+\mu^{1/2}f_0(x_1,\bar{x},v)\ge 0$ and 
		\begin{align}\label{124}
			\sum_{|\alpha|\le 1}\big(\|\widehat{\partial^\alpha f_0}\|^2_{L^1_{\bar{k}}L^2_{x_1,v}}+\|\widehat{\partial^\alpha E_0}\|^2_{L^1_{\bar{k}}L^2_{x_1}}\big)\le \varepsilon_0,
		\end{align}
		then there exists a unique mild solution $f(t,x_1,\bar{x},v)$ to the initial boundary value problem with specular reflection boundary  \eqref{1}, \eqref{specular_finite} and \eqref{Neumann_finite} satisfying \eqref{VPL} or \eqref{VPB} such that $F(t,x_1,\bar{x},v)=\mu+\mu^{1/2}f(t,x_1,\bar{x},v)\ge 0$ and  for any $T>0$, 
		\begin{align*}\notag
			\E_T + \D_T &\lesssim \sum_{|\alpha|\le 1}\big(\|\widehat{\partial^\alpha f_0}\|^2_{L^1_{\bar{k}}L^2_{x_1,v}}+\|\widehat{\partial^\alpha E_0}\|^2_{L^1_{\bar{k}}L^2_{x_1}}\big).
		\end{align*}
		In particular, one has the large time behavior: 
		\begin{align*}\notag
		\quad\,\|\widehat{\partial^\alpha f}\|_{L^1_{\bar{k}}L^2_{x_1,v}} 
		+\|\wh{\partial^\alpha E}\|_{L^1_{\bar{k}}L^2_{x_1}}
		\lesssim e^{-\delta t}\sum_{|\alpha|\le 1}\big(\|\widehat{w\partial^\alpha f_0}\|^2_{L^1_{\bar{k}}L^2_{x_1,v}}+\|\partial^\alpha\widehat{E_0}\|^2_{L^1_{\bar{k}}L^2_{x_1}}\big).
		\end{align*}
	\end{Thm}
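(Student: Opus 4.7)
The plan is to follow the torus-case strategy of Theorem \ref{Main} with modifications adapted to the finite-channel geometry. Only the tangential variables $\bar x\in\T^2$ are Fourier-transformed (giving the Wiener-like $L^1_{\bar k}$ structure), while the normal variable $x_1\in[-1,1]$ is kept in $L^2_{x_1}$; this is why one additional spatial derivative $\pa^\alpha$, $|\alpha|\le 1$, is included in the energy functionals \eqref{141}--\eqref{144}. The specular reflection \eqref{specular_finite} together with the Neumann condition \eqref{Neumann_finite} is compatible with the even extension of $a_\pm, c, \phi$ and $(b_2, b_3)$ and the odd extension of $b_1, E_1$ across $x_1=\pm 1$, which I use systematically to make the boundary terms from integration by parts in $x_1$ vanish.

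The linear decay for the Fourier-transformed system at each tangential mode $\bar k$ is obtained by the standard macro-micro decomposition $f = \P f + (\I-\P)f$. The coercivity of $-L$ from \cite{Guo2002a,Gressman2011} gives the microscopic dissipation $|(\I-\P)\wh f|^2_{L^2_{x_1}L^2_D}$. The macroscopic dissipation of $(a_\pm, b, c)$ and of $E$ is extracted via a ``thirteen-moments'' interactive functional tailored to the channel: testing the local conservation laws against suitable $x_1$-dependent test functions, using the global conservation laws \eqref{conservation_finite} to eliminate the zero-mode Poincar\'e obstruction, and invoking the Poisson equation $-\Delta_x\phi=(a_+-a_-)$ with Neumann data to control $\|E\|_{L^2_{x_1}}^2$ in terms of $\|a_+-a_-\|_{L^2_{x_1}}$. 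All boundary contributions are eliminated by the parity structure above. Combined with the exponential $e^{\delta t}$ for small $\delta>0$, this yields the linear dissipative estimate corresponding to $\E_T+\D_T$.

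For the nonlinear part, I run a Duhamel iteration using the linear semigroup. The Wiener algebra inequality $\|\widehat{FG}\|_{L^1_{\bar k}}\le\|\wh F\|_{L^1_{\bar k}}\|\wh G\|_{L^1_{\bar k}}$ together with the bilinear $\Gamma$-estimates of \cite{Guo2002a} and \cite{Gressman2011} closes the quadratic Boltzmann/Landau nonlinearity in the small-data regime \eqref{1.23} or \eqref{124}. The linear coupling $\pm\na_x\phi\cdot v\mu^{1/2}$ is absorbed by the macroscopic dissipation of $E$. The Vlasov transport $\mp\na_x\phi\cdot\na_v f$ causes a loss of one $v$-moment; in the Landau case $-2\le\gamma<-1$, this is exactly compensated by the time-dependent weight $w$ of \eqref{w2}, whose extra dissipation $\sqrt{qN}\<v\>^{\vt/2}(1+t)^{-(N+1)/2}e^{\delta t}w\wh{\pa^\alpha f}$ in $\D_{T,w}$ produces the missing $\<v\>^{\vt/2}$-moment, while for $\gamma\ge -1$ and $\gamma+2s\ge 1$ the collisional dissipation $|\cdot|_{L^2_D}$ already supplies enough velocity growth. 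Positivity $F\ge 0$, uniqueness, and the exponential decay at rate $\delta$ then follow from the same iterative argument as in \cite{Duan2020}.

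The main obstacle will be closing the estimate with the normal derivative $\pa_{x_1}f$ included. Unlike the tangential derivatives $\pa_{x_2}, \pa_{x_3}$, which turn into Fourier multipliers $ik_2, ik_3$ and commute with the boundary condition, $\pa_{x_1}f$ does \emph{not} satisfy specular reflection at $x_1=\pm 1$; differentiating \eqref{specular_finite} shows that it satisfies an odd-reflection in $v_1$. Consequently, when one integrates $v_1\pa_{x_1}(\pa_{x_1}f)\cdot\pa_{x_1}f$ by parts in $x_1$, boundary terms of the form $\bigl[\int v_1|\pa_{x_1}\wh f|^2\,dv\bigr]_{x_1=-1}^{x_1=1}$ appear and must be shown to cancel through the $v_1$-parity combined with the derived parity of $\pa_{x_1}f$, as in \cite{Duan2020} for the pure Landau/Boltzmann equations. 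The novelty here is the additional electric contribution $\pm\na_x\phi\cdot v f$ and $\mp\na_x\phi\cdot\na_v f$, whose boundary contributions cancel precisely because $E_1|_{x_1=\pm 1}=0$ by \eqref{Neumann_finite}. Once these cancellations are verified, the full energy inequality \eqref{13} closes in the standard way, yielding both the global existence and the exponential large-time behavior.
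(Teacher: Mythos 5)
Your a priori estimate architecture coincides with the paper's: Fourier transform only in $\bar x$, one extra spatial derivative $\partial^\alpha$, $|\alpha|\le 1$, macro--micro decomposition with channel-adapted test functions solving two-point elliptic problems in $x_1$ (the paper's Theorem \ref{Thm41f}), the conservation laws \eqref{conservation_finite} to remove the zero-mode Poincar\'e obstruction, parity-based cancellation of all boundary terms, and the time-velocity weight $w$ of \eqref{w2} whose extra dissipation compensates the moment loss from $\nabla_x\phi\cdot\nabla_v f$ when $-2\le\gamma<-1$. Two points need repair, though. First, the parity of $\partial_{x_1}f$ at $x_1=\pm 1$ cannot be obtained by ``differentiating \eqref{specular_finite}'': that condition lives only on the boundary slices, so $\partial_{x_1}$ of it is not defined. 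The paper instead evaluates the equation itself at $x_1=\pm1$ and uses the known $v_1$-parity of every other term to conclude that $\widehat{\partial_{x_1}f}(\pm1,\bar k,-v_1)=-\widehat{\partial_{x_1}f}(\pm1,\bar k,v_1)$ (see \eqref{115}--\eqref{115a}), which then kills the boundary flux as in \eqref{135}; your even-extension picture is a reasonable heuristic but must be made rigorous exactly this way. Second, the boundary contribution of the field term $\pm\widehat{\partial\nabla_x\phi}\cdot v\mu^{1/2}$ vanishes because $E_1|_{x_1=\pm1}=0$ only in the case $\partial=\partial_{x_1}$; for $\partial=I,\partial_{x_2},\partial_{x_3}$ the paper uses instead that $\partial G_1(\pm1)=0$, where $G$ is the microscopic flux \eqref{93a}, because $v_1\mu^{1/2}f_\pm$ is odd in $v_1$ at the boundary.

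The genuine divergence, and the place where your plan is under-specified, is the construction of solutions. The paper does not run a Duhamel iteration on a linear semigroup: it closes the full nonlinear energy--dissipation inequality (leading to \eqref{147}) and combines it with a continuity argument and a separately proved local existence theorem (Section \ref{Sec_loc}), obtained by adding a vanishing regularization $\varepsilon\<v\>^{30-8|\beta|}\partial^{2\alpha}_{2\beta}$ in a weak formulation compatible with the specular condition and passing to the limit. A semigroup for the linearized Vlasov--Poisson--Landau/Boltzmann operator in a channel with specular reflection, Neumann--Poisson coupling, and the time-dependent weight $w(t,v)$ is not available off the shelf; the inhomogeneous (``maximal regularity'' type) bounds you would need in $L^1_{\bar k}(L^\infty_T L^2_{x_1,v}\cap L^2_T L^2_{x_1}L^2_{D,w})$ are essentially equivalent to the energy estimates you would be trying to avoid, and the time dependence of $w$ does not mesh with a semigroup formulation. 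So either supply those linear estimates and a genuine fixed-point scheme, or replace this step by the paper's route: nonlinear a priori bounds, a regularized local existence argument respecting \eqref{specular_finite} and \eqref{Neumann_finite}, and a standard continuation, with positivity and uniqueness handled as in the cited references.
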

	
	Similar to Theorem \ref{spatial_regurlarity}, we have the propagation of spatial regularity in variable $\bar{x}$. 
	\begin{Thm}[Propagation of spatial regularity in $\bar{x}$.] \label{Channel2}
		Let all the assumptions in Theorem \ref{Channel1} be satisfied and $T,m\ge 0$. 
		
		\medskip\noindent (1)
		For the VPL system with $-2\le\gamma< -1$, there exist $\varepsilon_0,\delta>0$ such that if
		\begin{align*}
			\|w\widehat{f_0}\|_{L^1_{\bar{k},m}L^2_{x_1}L^2_v}+\|\widehat{E_0}\|_{L^1_{\bar{k},m}L^2_{x_1}}\le \varepsilon_0,
		\end{align*}
		then the solution $f$ to \eqref{1} and \eqref{VPL} established in Theorem \ref{Main} satisfies 
		\begin{align*}
\|e^{\delta t}w\widehat{f}\|_{L^1_{\bar{k},m}L^\infty_TL^2_{x_1}L^2_v}+
			\|e^{\delta t}\widehat{E}\|_{L^1_{\bar{k},m}L^\infty_TL^2_{x_1}}
			\lesssim \|\widehat{wf_0}\|_{L^1_{\bar{k},m}L^2_{x_1}L^2_v}+\|\widehat{E_0}\|_{L^1_{\bar{k},m}L^2_{x_1}}.
		\end{align*}

		\medskip\noindent (2)
		For the VPL system with $\gamma\ge -1$ and VPB system with $\gamma+2s\ge 1$ and $\frac{1}{2}\le s<1$, there exist $\varepsilon_0,\delta>0$ such that if 
		\begin{align*}
			\|\widehat{f_0}\|_{L^1_{\bar{k},m}L^2_{x_1}L^2_v}+\|\widehat{E_0}\|_{L^1_{\bar{k},m}L^2_{x_1}}\le \varepsilon_0,
		\end{align*}
		then the solution $f$ to the problem \eqref{1} satisfying \eqref{VPL} or \eqref{VPB} established in Theorem \ref{Main} satisfies  
		\begin{align*}
			\|e^{\delta t}\widehat{f}\|_{L^1_{\bar{k},m}L^\infty_TL^2_{x_1}L^2_v}+\|e^{\delta t}\widehat{E}\|_{L^1_{\bar{k},m}L^\infty_TL^2_{x_1}} 
			\lesssim \|\widehat{f_0}\|_{L^1_{\bar{k},m}L^2_{x_1}L^2_v}+\|\widehat{E_0}\|_{L^1_{\bar{k},m}L^2_{x_1}}.
		\end{align*}
	\end{Thm}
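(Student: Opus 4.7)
\textbf{Proof proposal for Theorem \ref{Channel2}.}
The plan is to mirror the proof of Theorem \ref{Channel1} verbatim on the linear-in-$\bar{k}$ level, and to close the nonlinear terms by a weighted convolution estimate in the Fourier variable $\bar{k}\in\Z^2$. Since the weight $\langle\bar{k}\rangle^m$ acts only in the $\bar{x}$-direction while the specular reflection boundary condition \eqref{specular_finite} and the Neumann condition \eqref{Neumann_finite} act only on $x_1$, the operator $\langle\bar{k}\rangle^m$ commutes with all boundary data; hence the reformulated system \eqref{1} with $\widehat{\cdot}$ replaced by $\langle\bar{k}\rangle^m\widehat{\cdot}$ satisfies the same boundary value problem, and the conservation laws \eqref{conservation_finite} remain undisturbed. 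Thus one can run the same $L^1_{\bar{k}}L^2_{x_1}L^2_{v}$ energy-dissipation machinery used to obtain \eqref{13}, but with the extra integrating weight $\langle\bar{k}\rangle^m$ on the Fourier side in $\bar{x}$.

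First, I would establish the linear a priori inequality corresponding to \eqref{13} for $\langle\bar{k}\rangle^m\widehat{f}$. The transport, linearized collision $L$, and the linear Poisson contribution $\nabla_x\phi\cdot v\mu^{1/2}$ are either local or gain regularity via elliptic estimates on the Poisson equation with Neumann data \eqref{Neumann_finite}; the same macroscopic/microscopic splitting (via $\P$ and $\I-\P$) and the same boundary manipulations used for Theorem \ref{Channel1} apply with $\langle\bar{k}\rangle^m$ carried harmlessly along. This yields
\begin{equation*}
\E_T^{(m)}+\D_T^{(m)}+\E_{T,w}^{(m)}+\D_{T,w}^{(m)}\lesssim \|\widehat{wf_0}\|_{L^1_{\bar{k},m}L^2_{x_1}L^2_v}+\|\widehat{E_0}\|_{L^1_{\bar{k},m}L^2_{x_1}}+\mathcal{N}^{(m)},
\end{equation*}
where $\E_T^{(m)},\D_T^{(m)},\ldots$ are the obvious analogues of \eqref{141}--\eqref{144} with $L^1_{\bar{k}}$ replaced by $L^1_{\bar{k},m}$, and $\mathcal{N}^{(m)}$ collects the weighted contributions of the nonlinear terms $\Gamma_{\pm}(f,f)$, $\nabla_x\phi\cdot\nabla_v f$, and $\nabla_x\phi\cdot vf$.

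The core new ingredient is the estimate of $\mathcal{N}^{(m)}$. In Fourier variable $\bar{k}$, each nonlinear term becomes a convolution $\widehat{f}*\widehat{g}$ on $\Z^2$. Applying the Peetre-type inequality
\begin{equation*}
\langle\bar{k}\rangle^m\le C_m\bigl(\langle\bar{k}-\bar{\ell}\rangle^m+\langle\bar{\ell}\rangle^m\bigr),\qquad m\ge 0,
\end{equation*}
I split each convolution into two pieces, putting the weight $\langle\cdot\rangle^m$ on exactly one factor while leaving the other unweighted. Combined with the Banach algebra structure of $L^1_{\bar{k}}$ under convolution, together with the bilinear/trilinear $L^2_vL^2_{x_1}$ estimates on $\Gamma$ and $\nabla_x\phi\cdot(\nabla_v,v)f$ already used in the proof of Theorem \ref{Channel1}, this produces schematically
\begin{equation*}
\mathcal{N}^{(m)}\lesssim \bigl(\E_T+\D_T+\E_{T,w}+\D_{T,w}\bigr)\bigl(\E_T^{(m)}+\D_T^{(m)}+\E_{T,w}^{(m)}+\D_{T,w}^{(m)}\bigr).
\end{equation*}
Since Theorem \ref{Channel1} gives smallness of the unweighted factor by $\varepsilon_0$, the nonlinear term is absorbed into the left-hand side, closing the estimate.

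The hard part will be the nonlinear convolution estimate itself: one must verify that after multiplying through by $\langle\bar{k}\rangle^m$ and applying the Peetre splitting, the unweighted factor can genuinely be controlled by $\E_T+\D_T+\E_{T,w}+\D_{T,w}$ from Theorem \ref{Channel1} uniformly in $T$ and in $m$, and that the weight does not interfere with the dissipative structure $L^2_{D,w}$ needed to absorb the derivative loss. Once this bookkeeping is done, a standard continuity argument in $T$ propagates the weighted bound globally and yields the claimed exponential decay by pulling out the factor $e^{\delta t}$ exactly as in Theorem \ref{Channel1}.
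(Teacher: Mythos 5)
Your proposal follows essentially the same route as the paper: re-run the Theorem \ref{Channel1} energy--dissipation machinery (together with the macroscopic estimates of Theorem \ref{Thm41f} and the trilinear bounds of Lemma \ref{Lem33a}) with the weight $\langle\bar{k}\rangle^m$ carried along, noting that the weight acts only in $\bar{x}$ so the $x_1$ boundary manipulations and conservation laws are untouched, and then close the weighted nonlinear convolution terms in $L^1_{\bar{k},m}$ by smallness and a continuity argument. The only difference is bookkeeping in the convolution step: you split the weight additively via Peetre's inequality and borrow smallness from the unweighted bounds of Theorem \ref{Channel1}, whereas the paper keeps the weight on both factors (using submultiplicativity of $\langle\bar{k}\rangle^m$, so that $L^1_{\bar{k},m}$ is itself a convolution algebra) and uses smallness of the weighted data directly; both variants close the estimate.
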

	
	Noticing that the $L^1_k$ space contains no derivative, we obtain the solutions to \eqref{1} in Theorems \ref{Main} and \ref{Channel1} with low regularity on the torus direction. This generalizes the results in \cite{Duan2020} in the pure Boltzmann and Landau case to those in the VPB and VPL case. In what follows, we give the main idea of the proof in brief. Firstly, we analyze the macroscopic estimates on the term $\P f$. As in \cite{Duan2020}, we take the velocity moments for equation \eqref{1} and take corresponding Fourier transform. 
	
	For the case of torus, we apply the conservation laws \eqref{4} to obtain that 
	\begin{align*}
		\Big(\int^T_0|\widehat{c}|_{k=0}|^2\,dt\Big)^{1/2}
		\lesssim \Big(\int^T_0\|\widehat{E}\|^4_{L^1_k}\,dt\Big)^{1/2}
		\lesssim \|\widehat{E}\|_{L^1_kL^\infty_T}\|\widehat{E}\|_{L^1_kL^2_T}.	
	\end{align*} 
	The arising new difficulty comes from the estimate on $\wh E$ and $\widehat{a_+}-\widehat{a_-}$. We take the inner product of \eqref{1} with 
		$\widehat{\Phi_a} = (|v|^2-10)iv\cdot k\widehat{\phi_a}\mu^{1/2}$,
	where $\widehat{\phi_a}(t,k)=(\widehat{\phi_a}_+(t,k),\widehat{\phi_a}_-(t,k))$ is a solution to 
	\begin{align*}
		|k|^2\widehat{\phi_a}_+(t,k) = \widehat{a_+}-\widehat{a_-},\quad |k|^2\widehat{\phi_a}_-(t,k) = \widehat{a_-}-\widehat{a_+}.
	\end{align*}
Then one can derive the macroscopic terms $\int^T_0|\widehat{a_+}-\widehat{a_-}|^2dt$ and $\int^T_0|\widehat{\nabla_x\phi}|^2dt$ from $\sum_\pm\int^T_0(\PP f,iv\cdot k\widehat{\Phi_{a\pm}})_{L^2_v}dt$ and $\sum_\pm\mp\int^T_0(\widehat{\nabla_x\phi}\cdot v\mu^{1/2},\widehat{\Phi_{a\pm}})_{L^2_v}dt$ respectively. 

\smallskip
For the case of finite channel, we need to deal with the boundary term arising from $v_1\pa_{x_1}\wh{f_\pm}$. In this case, we need to choose the test function carefully. For instance, for $\wh{\pa c}$, we choose $\widehat{\Phi_c} = (|v|^2-5)\big(v\cdot\widehat{\nabla_{x}\phi_c}(t,x_1,\bar{k})\big)\mu^{1/2},$
where $\phi_c$ solves 
\begin{equation*}\left\{
\begin{aligned}
	&-\partial^2_{x_1}\widehat{\phi_c}+|\bar{k}|^2\widehat{\phi_c} = \widehat{\partial c},\\
	&\widehat{\phi_c}(\pm 1,\bar{k})=0, \qquad \text{ if }\partial=\partial_{x_1},\\
	&\partial_{x_1}\widehat{\phi_c}(\pm 1,\bar{k})=0, \quad\text{ if }\partial=I,\partial_{x_2},\partial_{x_3}.
\end{aligned}\right.
\end{equation*}
With these nice chosen test functions, one can check that the boundary term vanishes by applying change of variable $v_1\mapsto -v_1$ and specular-reflection boundary condition:
\begin{align*}
	-\int^T_0(v_1\widehat{\partial f_\pm}(1),\widehat{\Phi}(1))_{L^2_v}\,dt+ \int^T_0(v_1\widehat{\partial f_\pm}(-1),\widehat{\Phi}(-1))_{L^2_v}\,dt = 0. 
\end{align*}

Notice that we can obtain the ``full" dissipation terms as indicated in \eqref{defd}, \eqref{142} and hence, the exponential time decay on $(f,E)$ can be derived. By applying the weight $\<k\>^m$ after the Fourier transform $\wh{\cdot}$, one can obtain the propagation of initial regularity over time. 

	The paper is organized as follows. In Section \ref{Sec2}, we give some basic estimates on collision operators. In Section \ref{section2}, we give the macroscopic dissipation estimates for VPL and VPB systems in torus. In Section \ref{MacroFinite}, we derive the macroscopic dissipation  estimates in finite channel. In Sections \ref{SecTorus} and \ref{SecFinite}, we prove the global existence and large-time behavior with the help of macroscopic estimates for torus and finite channel, respectively. To complete the arguments, we give  in Section \ref{Sec_loc} the proof for the local-in-time existence of the Vlasov-Poisson-Landau equation with the specular reflection boundary condition in the finite channel.

	%
	%
	%

	\section{Preliminary}\label{Sec2}
	In this section, we give some basic estimate on collision operator $L$ and $\Gamma(\cdot,\cdot)$. 
	%
	We begin with splitting $L_\pm$. For the Landau case, let $\varepsilon>0$ small and choose a smooth cutoff function $\chi(|v|)\in[0,1]$ such that 
	$
	\chi(|v|)=1\text{ if } |v|<\varepsilon;\  \chi(|v|)=0 \text{ if } |v|>2\varepsilon.
	$ Then we split $L_\pm f = -A_\pm f + K_\pm f$ as in \cite[Section 4.2]{Yang2016}, where  
	\begin{align*}
		-A_\pm f &= 2\partial_{v_i}(\sigma^{ij}\partial_{v_j}f_\pm)-2\sigma^{ij}\frac{v_i}{2}\frac{v_j}{2}f_\pm+2\partial_{v_i}\sigma^i\1_{|v|> R}f_\pm+A_1f\notag\\
		&\qquad+(K_1-\1_{|v|\le R}K_1\1_{|v|\le R})f,\\
		K_\pm f &= 2\partial_{v_i}\sigma^i\1_{|v|\le R}f_\pm + \1_{|v|\le R}K_1\1_{|v|\le R}f,
	\end{align*}
	where $R>0$ is to be chosen large, $\varepsilon>0$ is to be chosen small, and $A_1$ and $K_1$ are given respectively by 
	\begin{align*}
		A_1f &= -\sum_\pm\mu^{-1/2}\partial_{v_i}\Big\{\mu\Big[\Big(\phi^{ij}\chi\Big)*\Big(\mu\partial_{v_j}\big[\mu^{-1/2}f_\pm\big]\Big)\Big]\Big\},\\
		K_1f &=  -\sum_\pm\mu^{-1/2}\partial_{v_i}\Big\{\mu\Big[\Big(\phi^{ij}\big(1-\chi\big)\Big)*\Big(\mu\partial_{v_j}\big[\mu^{-1/2}f_\pm\big]\Big)\Big]\Big\},
	\end{align*}
	with the convolution taken with respect to the velocity variable $v$. 
	Then \cite[eq. (4.33) and eq. (4.32)]{Yang2016} shows that 
	\begin{equation*}
		\sum_\pm(A_\pm f,f_\pm)_{L^2_{v}}
		\ge c_0|f|^2_{L^2_{D}},
	\end{equation*}
	and 
	\begin{equation}\label{20aa}
		|(K_1g,h)_{L^2_v}|\lesssim |\mu^{1/10}g|_{L^2_v}|\mu^{1/10}h|_{L^2_v}. 
	\end{equation}
	From \cite[Lemma 3]{Guo2002a}, we know   
	\begin{align}\label{65aa}
		|\partial_\beta\sigma^{ij}(v)|+|\partial_\beta\sigma^i(v)|\le C_\beta(1+|v|)^{\gamma+2-|\beta|}.
	\end{align}
	Thus, \eqref{20aa} and \eqref{65aa} imply that $K$ is a bounded operator on $L^2_v$ with estimate 
	\begin{align*}
		|Kf|_{L^2_v}\lesssim |\mu^{1/10}f|_{L^2_v}. 
	\end{align*}
	
	For Boltzmann case, we split $L_\pm f = -A_\pm f + K f$ with 
	\begin{align*}
		-A_\pm f &= 2\mu^{-1/2}Q(\mu,\mu^{1/2}f_\pm),\\
		K f &= \mu^{-1/2}Q(\mu^{1/2}(f_++f_-),\mu).
	\end{align*}
	Then by \cite[Lemma 2.15]{Alexandre2012}, we have 
	\begin{align*}
		|Kf|_{L^2_v}\lesssim |\mu^{1/10^3}f|_{L^2_v}. 
	\end{align*}
Moreover, we have the following Lemma on the estimates of $L_\pm$ and $\Gamma_\pm$.

	\begin{Lem}
	Let $w$ be given by \eqref{w2}. Assume $\gamma>\max\{-3,-2s-\frac{3}{2}\}$ for Boltzmann case and $\gamma\ge-3$ for Landau case. 
		Then 
		\begin{align}\label{36b}
			\sum_{\pm}(L_\pm f,f_\pm)_{L^2_v} \gtrsim |\{\I-\P\}f|_{L^2_D}^2, 
		\end{align}
		and
		\begin{equation}\label{36c}\begin{aligned}
				\sum_\pm(w^2L_\pm g,g_\pm)_{L^2_v}&\ge c_0|g|_{L^2_{D,w}}^2-C|g|_{L^2(B_C)}^2,
			\end{aligned}
		\end{equation}
		There exists decomposition $L_\pm = -A_\pm + K_\pm$ such that $K_\pm$ is a bounded linear operator on $L^2_v$ and $A_\pm$ satisfies
		\begin{equation}
			\label{36a}
			\sum_\pm(w^2A_\pm g,g_\pm)\ge c_0|g|_{L^2_{D,w}}^2-C|g|_{L^2(B_C)}^2,
		\end{equation}
		Moreover, for any $|\alpha|+|\beta|\le 3$, we have 
		\begin{multline}\label{35a}
			\big(w^2\pa^\alpha_\beta\Gamma_\pm(g_1,g_2),\pa^\alpha_\beta g_3\big)_{L^2_v}\\\lesssim \sum\big(|w\pa^{\alpha_1}_{\beta_1}g_1|_{L^2_v}|\pa^{\alpha_2}_{\beta_2}g_2|_{L^2_{D,w}}+|\pa^{\alpha_1}_{\beta_1}g_1|_{L^2_{D,w}}|w\pa^{\alpha_2}_{\beta_2}g_2|_{L^2_{v}}\big)|\pa^\alpha_\beta g_3|_{L^2_{D,w}},
		\end{multline}
		where the summation is taken over $\alpha_1+\alpha_2=\alpha$, $\beta_1+\beta_2\le \beta$. 
	\end{Lem}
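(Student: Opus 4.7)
The plan is to address the four estimates separately, reducing each to its single-species counterpart in the existing literature and then handling the $\pm$-coupling.

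For \eqref{36b}, I would decompose $f=\P f+\{\I-\P\}f$. Since $\P f\in\ker L$, the left-hand side collapses to $\sum_\pm(L_\pm\{\I-\P\}f,\{\I-\P\}f_\pm)_{L^2_v}$, and the spectral-gap coercivity on $(\ker L)^\perp$ established by Guo \cite{Guo2002a} (Landau) and Gressman-Strain \cite{Gressman2011} / AMUXY \cite{Alexandre2012} (non-cutoff Boltzmann) in the two-species setting immediately gives the lower bound $\gtrsim|\{\I-\P\}f|^2_{L^2_D}$.

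For the weighted coercivity \eqref{36a} and \eqref{36c}, I would work with the splitting $L_\pm=-A_\pm+K_\pm$ stated before the lemma. Testing $A_\pm g$ against $w^2 g_\pm$ and integrating by parts in $v$ (Landau case) or invoking the cancellation lemma (Boltzmann case), the principal symmetric piece reproduces $c_0|g|^2_{L^2_{D,w}}$. The only extra term is a commutator of $v$-derivatives with $w^2$; using $\pa_{v_i}w=w\cdot q\vt\<v\>^{\vt-2}v_i(1+t)^{-N}$ together with \eqref{65aa} and Cauchy-Schwarz, this commutator is bounded after localization by $\tfrac{c_0}{2}|g|^2_{L^2_{D,w}}+C|g|^2_{L^2(B_C)}$, where the smallness $0<q<1/8$ at $\vt=2$ is exactly what makes the absorption possible. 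This gives \eqref{36a}. For \eqref{36c} one further bounds $(w^2K_\pm g,g_\pm)_{L^2_v}$; since $K_\pm$ carries a factor $\mu^{1/10}$ (Landau) or $\mu^{1/10^3}$ (Boltzmann) and $w\mu^{1/20}$ is globally bounded for the admissible $\vt$, this contribution is absorbed into $C|g|^2_{L^2(B_C)}$.

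For the trilinear estimate \eqref{35a}, Leibniz expansion reduces $\pa^\alpha_\beta\Gamma_\pm(g_1,g_2)$ to a finite sum $\sum\Gamma_\pm^{(\beta_0)}(\pa^{\alpha_1}_{\beta_1}g_1,\pa^{\alpha_2}_{\beta_2}g_2)$ with modified kernels of the same form (the indices $\alpha_1+\alpha_2=\alpha$, $\beta_1+\beta_2+\beta_0=\beta$). The weighted bound then follows from the known trilinear estimates of Guo \cite{Guo2002a} and Strain-Guo \cite{Strain2013} (Landau) and of AMUXY \cite{Alexandre2012} / Gressman-Strain \cite{Gressman2011} (Boltzmann); the factor $w^2$ is transferred onto the third argument via the pointwise inequality $w(v)\lesssim w(v')w(v_*)e^{c|v-v_*|^2}$, the Gaussian error being dominated by $\mu^{1/2}$, which is where $q<1/8$ at $\vt=2$ enters once more.

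The main obstacle is the uniform-in-$t,N$ control of the weight commutator in the soft range $\gamma\in[-2,-1)$ where $\vt\in[1,2]$: the $v$-derivative of $w$ carries a factor $q\vt\<v\>^{\vt-2}v$, and at the borderline $\vt=2$ the dissipative quadratic form $\sigma^{ij}v_iv_j\sim\<v\>^{\gamma+2}$ sits exactly at the critical scale, so the constraint $q<1/8$ is essential to make the commutator genuinely absorbable by the principal dissipation rather than merely comparable to it.
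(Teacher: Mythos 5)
Your outline is mathematically sound, but it takes a different route from the paper only in presentation: the paper disposes of this lemma entirely by citation, invoking \cite[Lemmas 5, 7--10]{Strain2007} for the Landau case and \cite[Lemmas 2.6--2.7]{Duan2013a} together with \cite{Fan2017} for the non-cutoff Boltzmann case, whereas you sketch direct proofs whose content essentially reconstructs the arguments of those cited lemmas (spectral-gap coercivity on $(\ker L)^{\perp}$ for \eqref{36b}; weight-commutator absorption using $\partial_{v_i}w = q\vt\<v\>^{\vt-2}v_i(1+t)^{-N}w$, \eqref{65aa} and the smallness of $q$ for \eqref{36a}--\eqref{36c}; Leibniz expansion plus weight transfer for \eqref{35a}). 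Two points deserve care if the sketch were to be expanded. First, for \eqref{36b} the two-species operator should be reduced to the single-species spectral gap (e.g.\ by passing to $f_++f_-$ and $f_+-f_-$); you assert the two-species coercivity as if it were verbatim in \cite{Guo2002a,Gressman2011,Alexandre2012}, which is a small but real step. Second, your mechanism for \eqref{35a}, the pointwise transfer $w(v)\lesssim w(v')w(v_*)e^{c|v-v_*|^2}$ with the Gaussian error absorbed by $\mu^{1/2}$, is intrinsically a Boltzmann argument (it uses pre/post-collisional velocities); it does not literally apply to the Landau form \eqref{VPL}, and in this paper the weight $w\neq 1$ is active \emph{only} in the Landau case $-2\le\gamma<-1$, so the one situation where the weighted trilinear bound is actually needed is the one your mechanism does not cover. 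There the bound follows instead from the convolution structure $\phi^{ij}*(\mu^{1/2}\,\cdot\,)$, integration by parts in $v$, and the Gaussian factors, exactly as in \cite[Lemma 10]{Strain2007} (note also that your Landau reference ``Strain--Guo \cite{Strain2013}'' points to Strain--Zhu). Since you do also defer to the known weighted trilinear estimates, these are gaps of detail rather than of idea, and both are closed by the references the paper itself cites.
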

	\begin{proof}
		The proof of \eqref{36b} and \eqref{36c} can be found in \cite[Lemma 5, Lemma 9]{Strain2007} for Landau case and
		\cite[Lemma 2.7]{Duan2013a} for the Boltzmann case.   Note that the proof in \cite{Duan2013a} is also valid for hard potential. The proof of \eqref{36a} can be found in \cite[Lemma 7 and Lemma 8]{Strain2007} for Landau case and \cite[Lemma 2.6]{Duan2013a} for Boltzmann case.
		The proof of \eqref{35a} is given by \cite[Lemma 10]{Strain2007} for Landau case and \cite[Lemma 2.4 and Lemma 2.4]{Duan2013a, Fan2017}. 
\end{proof}

	\section{Macroscopic estimates on torus}\label{section2}
	In this section we will derive the {\em a priori} estimates for the macroscopic part of the solution to equation:
	\begin{align}\label{40}
		\partial_t{f_\pm}+v\cdot \nabla_x{f_\pm} \pm {\nabla_x\phi}\cdot v\mu^{1/2} - L_\pm {f} = {g_\pm},
	\end{align}
	where 
	\begin{align}\label{40a}
		g_\pm = \pm&\nabla_x\phi\cdot\nabla_vf_\pm\mp\frac{1}{2}\nabla_x\phi\cdot vf_\pm+\Gamma_\pm(f,f),
	\end{align}
	and $\phi$ is given by 
	\begin{equation}
		\label{2}
		-\Delta_x\phi = a_+-a_-. 
	\end{equation}
	
	To capture the macroscopic dissipation, we take the following velocity moments
	\begin{equation*}
		\mu^{\frac{1}{2}}, v_j\mu^{\frac{1}{2}}, \frac{1}{6}(|v|^2-3)\mu^{\frac{1}{2}},
		(v_j{v_m}-1)\mu^{\frac{1}{2}}, \frac{1}{10}(|v|^2-5)v_j \mu^{\frac{1}{2}}
	\end{equation*}
	with {$1\leq j,m\leq 3$} for the equation \eqref{40}. By taking the average and difference on $\pm$ of the resultant equations, one sees that  
	the coefficient functions $[a_\pm,b,c]=[a_\pm,b,c](t,x)$ satisfy
	the fluid-type system 
	\begin{equation}\label{19}\left\{
		\begin{aligned}
			&\partial_t\Big(\frac{a_++a_-}{2}\Big)+\nabla_x\cdot b = 0,\\
			&\partial_tb_j+\partial_{x_j}\Big(\frac{a_++a_-}{2}+2c\Big)+\frac{1}{2}\sum_{m=1}^3\partial_{x_m}\Theta_{jm}(\{\I-\P\}f\cdot[1,1])
			= \frac{1}{2}\sum_\pm(g_\pm,v_j\mu^{1/2})_{L^2_v},\\
			&\partial_tc+\frac{1}{3}\nabla_x\cdot b + \frac{5}{6}\sum^3_{j=1}\partial_{x_j}\Lambda_j(\{\I-\P\}f\cdot[1,1]) = \frac{1}{12}\sum_\pm(g_\pm,(|v|^2-3)\mu^{1/2})_{L^2_v},\\
			&\partial_t\Big(\frac{1}{2}\Theta_{jm}((\I-\P)f\cdot[1,1])+2c\delta_{jm}\Big) + \partial_{x_j}b_m+\partial_{x_m}b_j = \frac{1}{2}\sum_\pm\Theta_{jm}(g_\pm+h_\pm),\\
			&\frac{1}{2}\partial_t\Lambda_j((\I-\P)f\cdot[1,1])+\partial_{x_j}c = \frac{1}{2}\Lambda_j(g_++g_-+h_++h_-),
		\end{aligned}\right.
	\end{equation}for $1\le j,m\le3$, where
	\begin{align*}
		h_\pm &= -v\cdot\nabla_x(\II-\PP)f+L_\pm f,\\
		\Theta_{jm}(f_\pm) = (f_\pm,(v_jv_m&-1)\mu^{1/2})_{L^2_v},\ \ \Lambda_j(f_\pm) =\frac{1}{10}(f_\pm,(|v|^2-5)v_j\mu^{1/2})_{L^2_v},
	\end{align*}
	and 
	\begin{equation}\label{98a}\left\{
		\begin{aligned}
			&\partial_t({a_+}-{a_-})+\nabla_{x}\cdot {G}=0,\\
			&\partial_t{G}+\nabla_x({a_+}-{a_-})-2{E}+\nabla_x\cdot\Theta(\{\I-\P\}{f}\cdot[1,-1])=(({g}+L{f})\cdot[1,-1],v\mu^{1/2})_{L^2_v},
		\end{aligned}\right.
	\end{equation}
	where
	\begin{align}\label{93a}
		G = (\{\I-\P\}f\cdot[1,-1],v\mu^{1/2})_{L^2_v}.
	\end{align}
	For further application in the case of torus, we take Fourier transform on $x\in\T^3$ of \eqref{19} \eqref{98a} and \eqref{2} to obtain 
	\begin{equation}\label{28}
		\left\{\begin{aligned}
			&\partial_t\Big(\frac{\widehat{a_+}+\widehat{a_-}}{2}\Big)+ik\cdot \widehat{b} = 0,\\
			&\partial_t\widehat{b_j}+ik_j\Big(\frac{\widehat{a_+}+\widehat{a_-}}{2}+2\widehat{c}\Big)+\frac{1}{2}\sum_{m=1}^3ik_m\Theta_{jm}(\{\I-\P\}\widehat{f}\cdot[1,1])
			= \frac{1}{2}\sum_\pm(\widehat{g_\pm},v_j\mu^{1/2})_{L^2_v},\\
			&\partial_t\widehat{c}+\frac{1}{3}ik\cdot \widehat{b} + \frac{5}{6}\sum^3_{j=1}ik_j\Lambda_j(\{\I-\P\}\widehat{f}\cdot[1,1]) = \frac{1}{12}\sum_\pm(\widehat{g_\pm},(|v|^2-3)\mu^{1/2})_{L^2_v},\\
			&\partial_t\Big(\frac{1}{2}\Theta_{jm}(\{\I-\P\}\widehat{f}\cdot[1,1])+2\widehat{c}\delta_{jm}\Big) + ik_j\widehat{b_m}+ik_m\widehat{b_j} = \frac{1}{2}\sum_\pm\Theta_{jm}(\widehat{g_\pm}+\widehat{h_\pm}),\\
			&\frac{1}{2}\partial_t\Lambda_j(\{\I-\P\}\widehat{f}\cdot[1,1])+ik_j\widehat{c} = \frac{1}{2}\sum_\pm\Lambda_j(\widehat{g_\pm}+\widehat{h_\pm}),
		\end{aligned}\right.
	\end{equation}
	and 
	\begin{equation}\label{27}\left\{
		\begin{aligned}
			&\partial_t(\widehat{a_+}-\widehat{a_-})+ik\cdot \widehat{G}=0,\\
			&\partial_t\widehat{G}+ik(\widehat{a_+}-\widehat{a_-})-2\widehat{E}+ik\cdot\Theta(\{\I-\P\}\widehat{f}\cdot[1,-1])=((\widehat{g}+L\widehat{f})\cdot[1,-1],v\mu^{1/2})_{L^2_v},\\
			&ik\cdot \widehat{E}=\widehat{a_+}-\widehat{a_-}.
		\end{aligned}\right.
	\end{equation}
	With the above preparation, we are ready to obtain the following macroscopic estimate in isotropic case. 
	\begin{Thm}\label{Thm21}
	Let $\gamma\ge -3$ for Landau case, $\gamma>\max\{-3,-2s-3/2\}$ for Boltzmann case and $T>0$. Let $f$ be the solution of \eqref{1} in torus with initial data satisfying \eqref{4}. Then 
		\begin{multline}\label{53}
			\|(\widehat{a_+},\widehat{a_-},\widehat{b},\widehat{c})\|_{L^1_kL^2_T}+\|\widehat{E}\|_{L^1_kL^2_T}\lesssim \|\widehat{f}\|_{L^1_kL^\infty_TL^2_v}+\|\widehat{f_0}\|_{L^1_kL^2_v}+\|\{\I-\P\}\widehat{f}\|_{L^1_kL^2_TL^2_{D}}\\\quad +\|\widehat{E}\|_{L^1_kL^\infty_T}\|\widehat{E}\|_{L^1_kL^2_T}	+\big(\|\widehat{E}\|_{L^1_kL^\infty_T}+\|\widehat{f}\|_{L^1_kL^\infty_TL^2_v}\big)\|\widehat{f}\|_{L^1_kL^2_TL^2_{D}}.	
		\end{multline}
	\end{Thm}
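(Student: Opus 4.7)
The plan is to recover the macroscopic dissipation one component at a time from the Fourier-transformed fluid-type moment system \eqref{28}--\eqref{27} derived by taking appropriate velocity moments of \eqref{40}. For each target quantity $\widehat{a_\pm}$, $\widehat b$, $\widehat c$, $\widehat E$ I would design a test function $\widehat\Phi(t,k,v)$ built from a tensor-valued polynomial in $v$ times $\mu^{1/2}$ coupled with an appropriate Fourier multiplier in $k$, test the equation \eqref{40} against $\overline{\widehat\Phi}$ in $L^2_v$, integrate in $t\in[0,T]$, and sum the resulting pointwise-in-$k$ inequalities in $L^1_k$.

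\textbf{Zero mode.} At $k=0$ the conservation laws \eqref{4} immediately yield $\widehat{a_\pm}(t,0)=0$ and $\widehat b(t,0)=0$ for every $t$, while the third identity of \eqref{4} forces $12\,\widehat c(t,0)=-\int_{\T^3}|E(t,x)|^2\,dx$, so by Plancherel $|\widehat c(t,0)|\lesssim\|\widehat E(t)\|_{L^1_k}^2$. Squaring, integrating in $t$ and applying H\"older then yields the cubic term $\|\widehat E\|_{L^1_kL^\infty_T}\|\widehat E\|_{L^1_kL^2_T}$ appearing on the right-hand side of \eqref{53}.

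\textbf{Non-zero modes.} For $k\neq 0$ I would follow the recipe of the standard fluid-type argument and choose, up to constants, $\widehat{\Phi_c}\sim(|v|^2-5)v\mu^{1/2}\cdot ik|k|^{-2}\overline{\widehat c}$ for $\widehat c$, a tensorized combination involving $(v_jv_m-\delta_{jm})\mu^{1/2}$ via the moment equation for $\Theta_{jm}$ in \eqref{28} for $\widehat b$, and $v\mu^{1/2}\cdot ik|k|^{-2}\overline{(\widehat{a_+}+\widehat{a_-})}$ for the symmetric mass. For the Poisson-coupled pair $(\widehat{a_+}-\widehat{a_-},\widehat E)$ I would follow the hint from the introduction and test against $\widehat{\Phi_{a\pm}}=(|v|^2-10)\,iv\cdot k\,\widehat{\phi_{a\pm}}\mu^{1/2}$ with $|k|^2\widehat{\phi_{a\pm}}=\pm(\widehat{a_+}-\widehat{a_-})$; the Poisson identity $ik\cdot\widehat E=\widehat{a_+}-\widehat{a_-}$ from \eqref{27} then converts the forcing $\pm\widehat{\nabla_x\phi}\cdot v\mu^{1/2}$ into $|\widehat E|^2$, producing the two dissipations simultaneously. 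In each case, after integration in $t$, the $\partial_t$-term becomes an $L^\infty_T$ boundary contribution bounded by $\|\widehat f\|_{L^1_kL^\infty_TL^2_v}+\|\widehat{f_0}\|_{L^1_kL^2_v}$; the $v\cdot\nabla_x$ streaming term produces the square of the macroscopic quantity plus a remainder absorbed by $\|\{\I-\P\}\widehat f\|_{L^1_kL^2_TL^2_D}$; and the $L\widehat f=L\{\I-\P\}\widehat f$ term is controlled by the same microscopic norm using \eqref{36b}.

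\textbf{Main obstacle.} The delicate step is the nonlinear source $\widehat{g_\pm}$ in \eqref{40a}, which after Fourier transform in $x$ becomes a convolution in $k\in\Z^3$ of two factors. Applying \eqref{35a} pointwise in $k$ to the $\Gamma$-contributions and integrating by parts in $v$ the transport-type pieces $\nabla_x\phi\cdot\nabla_vf_\pm$ and $\nabla_x\phi\cdot vf_\pm$, Young's convolution inequality $L^1_k\ast L^1_k\hookrightarrow L^1_k$ combined with H\"older in time $L^\infty_T\cdot L^2_T\hookrightarrow L^2_T$ recovers precisely the cubic structures $\|\widehat E\|_{L^1_kL^\infty_T}\|\widehat f\|_{L^1_kL^2_TL^2_D}$ and $\|\widehat f\|_{L^1_kL^\infty_TL^2_v}\|\widehat f\|_{L^1_kL^2_TL^2_D}$ on the right of \eqref{53}. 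A secondary check is that the Fourier multipliers $k/|k|^2$ in the test functions are bounded by $|k|^{-1}\le 1$ on $\Z^3\setminus\{0\}$, so no powers of $k$ are lost when closing the $L^1_k$ summation.
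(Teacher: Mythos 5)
Your proposal is correct and follows essentially the same route as the paper's proof: conservation laws at $k=0$ giving $|\widehat c(t,0)|\lesssim\|\widehat E(t)\|_{L^1_k}^2$, dual test functions of the form (velocity polynomial)$\times\,iv\cdot k\,|k|^{-2}\times$(macroscopic quantity)$\,\mu^{1/2}$ for $|k|\ge 1$ (including the key choice $(|v|^2-10)iv\cdot k\widehat{\phi_{a\pm}}\mu^{1/2}$ that turns the forcing $\pm\widehat{\nabla_x\phi}\cdot v\mu^{1/2}$ into the $|\widehat E|^2$ dissipation), and $L^1_k$ Young convolution with $L^\infty_T$--$L^2_T$ H\"older for the nonlinear sources. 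The only cosmetic differences are that the paper's $(|v|^2-10)$ weight in the $\widehat{a_+}+\widehat{a_-}$ step kills the cross term with $\widehat c$ that your plain $v\mu^{1/2}$ choice would leave (harmless, since it is absorbed by the $\widehat c$ estimate), and that the time integration by parts also places $\partial_t$ on the $t$-dependent test functions, which the paper controls through the moment system \eqref{28} and the $\eta\ll\kappa\ll 1$ linear combination — a step implicit in your ``standard fluid-type recipe.''
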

	\begin{proof}
	We will prove the desired estimate by dividing norm $L^1_k$ into cases $k=0$ and $k\neq 0$. 
		
		\medskip \noindent{\bf Case $k=0$:}	
		Using \eqref{28} with $k=0$, we have 
		\begin{align*}
			&\partial_t\Big(\frac{\widehat{a_+}+\widehat{a_-}}{2}\Big)|_{k=0} = 0,\\
			&\partial_t\widehat{b_j}|_{k=0}	= \frac{1}{2}\sum_\pm(\pm\nabla_x\phi\cdot\nabla_vf_\pm\mp\frac{1}{2}\nabla_x\phi\cdot vf_\pm,v_j\mu^{1/2})_{L^2_{x,v}},\\
			&\partial_t\widehat{c}|_{k=0} = \frac{1}{12}\sum_\pm(\pm\nabla_x\phi\cdot\nabla_vf_\pm\mp\frac{1}{2}\nabla_x\phi\cdot vf_\pm,(|v|^2-3)\mu^{1/2})_{L^2_{x,v}}.
		\end{align*}
		Taking integration by parts on $\nabla_v$ and using \eqref{2}, we have 
		\begin{align*}
			\partial_t\widehat{b_j}|_{k=0}	&= \frac{1}{2}\sum_\pm\Big((\mp\nabla_x\phi f_\pm,e_j\mu^{1/2}-\frac{1}{2}v_jv\mu^{1/2})_{L^2_{x,v}}\mp\frac{1}{2}(\nabla_x\phi\cdot vf_\pm,v_j\mu^{1/2})_{L^2_{x,v}}\Big)\\
			&= \frac{1}{2}\int_{\T^3}\partial^{e_j}\phi(a_--a_+)\,dx
			= \frac{1}{2}\int_{\T^3}\partial^{e_j}\phi\Delta_x\phi\,dx
			=0,
		\end{align*}
		where $e_j$ is unit vector in $\R^3$ with the $j$'s component being $1$. By using \eqref{27} and integration by parts on $\nabla_v$, 
		\begin{align*}
			\partial_t\widehat{c}|_{k=0} = -\frac{1}{12}\sum_\pm(\pm\nabla_x\phi f_\pm,2v\mu^{1/2})_{L^2_{x,v}} = -\frac{1}{6}\int_{\Z^3}{\nabla_x\phi}\cdot\overline{{G}}\,d\Sigma(k)
			=-\frac{1}{12}\partial_t\int_{\Z^3}|{E}|^2\,d\Sigma(k). 
		\end{align*}
		These equations give the conservation laws \eqref{4}. 
		Since the initial data $f_0$ satisfies \eqref{4}, 
		using Plancherel's Theorem, we have 
		\begin{align*}
			&\widehat{a_+}|_{k=0}\equiv\widehat{a_-}|_{k=0}\equiv\widehat{b_j}|_{k=0}\equiv0,\\
			&\widehat{c}|_{k=0}\equiv-\frac{1}{12}\int_{\T^3}|{E}|^2\,dx.
		\end{align*}
		Taking $L^2_T$ norms on $\widehat{c}|_{k=0}$, we have 
		\begin{multline}\label{48}
			\Big(\int^T_0|\widehat{c}|_{k=0}|^2\,dt\Big)^{1/2}\lesssim \Big(\int^T_0\|E\|^4_{L^\infty_x}\,dt\Big)^{1/2}
			\lesssim \Big(\int^T_0\|\widehat{E}\|^4_{L^1_k}\,dt\Big)^{1/2}\\
			\lesssim \sup_{0\le t\le T}\|\widehat{E}\|_{L^1_k}\Big(\int^T_0\|\widehat{E}\|^2_{L^1_k}\,dt\Big)^{1/2}
			\lesssim \|\widehat{E}\|_{L^1_kL^\infty_T}\|\widehat{E}\|_{L^1_kL^2_T}.	
		\end{multline} 
		
		\medskip \noindent{\bf Case $|k|\ge 1$:}
		To obtain the estimate for $|k|\ge 1$, we use the trick in \cite[Theorem 5.1]{Duan2020}. In order to carry out the estimate in a unified way, we take a general test function as 
		\begin{align*}
			\widehat{\Phi}(t,k,v) \in C^1((0,\infty)\times\Z^3\times\R^3).
		\end{align*}
		Also, we denote $\zeta(v)$ to be a smooth function satisfying 
		\begin{align*}
			\zeta(v)\lesssim e^{-\lambda|v|^2},
		\end{align*}for some $\lambda >0$. The function $\zeta(v)$ may change from line to line.
		The following integration will be used frequently: if $p>-1$ is an even number, then 
		\begin{align}\label{25}
			\int_{\R}z^pe^{-\frac{|z|^2}{2}}dz = (p-1)!!\sqrt{2\pi}.
		\end{align}
		Taking the inner product of \eqref{40} with $\Phi$ in $L^2_v$ and integrating over $[0,T]$, we have 
		\begin{multline*}
			(\widehat{f_\pm}(T),\widehat{\Phi}(T))_{L^2_v} +(\widehat{f_\pm}(0),\widehat{\Phi}(0))_{L^2_v}-\int^T_0(\widehat{f_\pm},\partial_t\widehat{\Phi})_{L^2_v}dt-\int^T_0(\widehat{f_\pm},iv\cdot k\widehat{\Phi})_{L^2_v}dt\\\pm\int^T_0(\widehat{\nabla_x\phi}\cdot v\mu^{1/2},\widehat{\Phi})_{L^2_v}dt+\int^T_0(-L_\pm\widehat{f},\widehat{\Phi})_{L^2_v}dt = \int^T_0(\widehat{g_\pm},\widehat{\Phi})_{L^2_v}dt.
		\end{multline*}
		Then we decompose $f=\PP f+(\II-\PP)f$ to obtain 
		\begin{align*}
			-\int^T_0(\PP f,iv\cdot k\widehat{\Phi})_{L^2_v}\,dt =\sum_{j=1}^5S_j,
		\end{align*}
		where 
		\begin{align*}
			S_1 &= (\widehat{f_\pm},\widehat{\Phi})_{L^2_v}|_{t=0} - (\widehat{f_\pm},\widehat{\Phi})_{L^2_v}|_{t=T},\\
			S_2 &= \int^T_0(\widehat{f_\pm},\partial_t\widehat{\Phi})_{L^2_v}dt,\\
			S_3 &= \int^T_0((\II-\PP)f,iv\cdot k\widehat{\Phi})_{L^2_v}dt+\int^T_0(L_\pm\widehat{f},\widehat{\Phi})_{L^2_v}dt,\\
			S_4 &= \mp\int^T_0(\widehat{\nabla_x\phi}\cdot v\mu^{1/2},\widehat{\Phi})_{L^2_v}dt,\\
			S_5 &= \int^T_0(\widehat{g_\pm},\widehat{\Phi})_{L^2_v}dt.
		\end{align*}

		\medskip \noindent{\bf Step 1. Estimate of $\widehat{c}(t,k,v)$.}
		Firstly, we consider the estimate on $\widehat{c}$ for $|k|\ge 1$. We choose 
		\begin{align*}
			\widehat{\Phi}= \widehat{\Phi_c} = (|v|^2-5)iv\cdot k\widehat{\phi_c}(t,k)\mu^{1/2},
		\end{align*}
		where $\phi_c$ is defined by 
		\begin{align}\label{58}
			|k|^2\widehat{\phi_c}(t,k) = \widehat{c}(t,k). 
		\end{align}
		Note that here we assume $|k|\ge 1$ and write $\widehat{\phi_c}(t,k)=\widehat{c}(t,k)/|k|^2$. By this choice, we can deduce that 
		\begin{align*}
			&\quad\,-\int^T_0(\PP f,iv\cdot k\widehat{\Phi})_{L^2_v}\,dt\\
			&=\sum_{j,n=1}^3\int^T_0\big((\widehat{a_\pm}+\widehat{b}\cdot v+(|v|^2-3)\widehat{c})\mu^{1/2},(|v|^2-5)v_jv_n(-k_jk_n)\mu^{1/2}\widehat{\phi_c}\big)dt\\
			&=10\int^T_0(\widehat{c},|k|^2\widehat{\phi_c})dt = 
			10\int^T_0|\widehat{c}(t,k)|^2dt,
		\end{align*}
		where we used the orthogonality of the different integrands for the second equality. 
		Now we estimate the $S_j$'s. Since $|k|\ge 1$, it holds that 
		\begin{align}\label{59}
			|\widehat{\Phi_c}(t,k)|\lesssim \mu^{1/4}|k||\widehat{\phi_c}(t,k)|\lesssim\mu^{1/4}|k|^2|\widehat{\phi_c}(t,k)|=\mu^{1/4}|\widehat{c}(t,k)|.
		\end{align}
		Thus,
		\begin{align*}
			|S_1|&\lesssim |\widehat{f}(k,T)|^2_{L^2_v}+|\widehat{f_0}(k)|_{L^2_v}^2.
		\end{align*}
		For $S_2$, we first notice that from the third equation of \eqref{28}, 
		\begin{align*}
			|\partial_t\widehat{c}|\lesssim |k|\big(|\widehat{b}(t,k)|+|\{\I-\P\}\widehat{f}(t,k)|\big)+|(\widehat{g_+}+\widehat{g_-},\zeta)_{L^2_v}|.
		\end{align*}
		Thus, noticing $|k|\ge 1$, 
		\begin{align*}
			|S_2|&= \big|\int^T_0(\widehat{f_\pm},\partial_t\widehat{\Phi_c})_{L^2_v}dt\big| = \big|\int^T_0((\II-\PP)\widehat{f},\partial_t\widehat{\Phi_c})_{L^2_v}dt\big|\\
			&\lesssim \eta\int^T_0\frac{|\partial_t\widehat{c}|^2}{|k|^2}dt+C_\eta\int^T_0|\{\I-\P\}\widehat{f}|^2_Ddt\\
			&\lesssim \eta\int^T_0|\widehat{b}(t,k)|^2dt+\eta\int^T_0|(\widehat{g_+}+\widehat{g_-},\zeta)_{L^2_v}|^2dt+C_\eta\int^T_0|\{\I-\P\}\widehat{f}|^2_Ddt, 
		\end{align*}
		where the second equality follows from orthogonality. 
		For $S_3$, by H\"{o}lder's inequality, \eqref{58} and \eqref{59}, we have 
		\begin{align*}
			|S_3|&\lesssim \eta\int^T_0|\widehat{c}(t,k)|^2dt+C_\eta\int^T_0|\{\I-\P\}\widehat{f}(t,k)|^2_Ddt.
		\end{align*}
		For the term $S_4$, noticing $|k|\ge 1$ and \eqref{58}, 
		\begin{align*}
			|S_4|&\lesssim \int^T_0|\widehat{\nabla_x\phi}||k|^2|\widehat{\phi_c}|dt
			\lesssim C_\eta\int^T_0|\widehat{\nabla_x\phi}|^2\,dt+\eta\int^T_0|\widehat{c}(t,k)|^2dt.
		\end{align*}
		For the term $S_5$, noticing \eqref{58}, we have 
		\begin{align*}
			|\sum_\pm S_5|&\lesssim \eta\int^T_0|\widehat{c}|^2dt+C_\eta\int^T_0|(\widehat{g_+}+\widehat{g_-},\zeta)_{L^2_v}|^2dt.
		\end{align*}
		Combining the above estimates and choosing $\eta>0$ sufficiently small, we have 
		\begin{align}\notag\label{41}
			\int^T_0|\widehat{c}(t,k)|^2dt&\lesssim |\widehat{f}(k,T)|^2_{L^2_v}+|\widehat{f_0}(k)|_{L^2_v}^2+\eta\int^T_0|\widehat{b}(t,k)|^2dt+C_\eta\int^T_0|\widehat{\nabla_x\phi}|^2\,dt\\&\qquad+C_\eta\int^T_0|\{\I-\P\}\widehat{f}(t,k)|_D^2dt+C_\eta\int^T_0|(\widehat{g_+}+\widehat{g_-},\zeta)_{L^2_v}|^2dt.
		\end{align}

		\medskip \noindent{\bf Step 2. Estimate of $\widehat{b}(t,k,v)$.}
		Now we consider the estimate of $\widehat{b}$. For this purpose we choose 
		\begin{align*}
			\widehat{\Phi}=\widehat{\Phi_b}=\sum^3_{m=1}\widehat{\Phi^{j,m}_b},\ j=1,2,3,
		\end{align*}
		where 
		\begin{equation*}
			\widehat{\Phi^{j,m}_b}=\left\{\begin{aligned}
				\big(|v|^2v_mv_jik_m\widehat{\phi_j}-\frac{7}{2}(v_m^2-1)ik_j\widehat{\phi_j}\big)\mu^{1/2},\ j\neq m,\\
				\frac{7}{2}(v_j^2-1)ik_j\widehat{\phi_j}\mu^{1/2},\qquad\qquad\qquad j=m,
			\end{aligned}\right.
		\end{equation*}
		and 
		\begin{align}\label{60}
			|k|^2\widehat{\phi_j}(t,k)=\widehat{b_j}(t,k).
		\end{align}
		Under this choice we have 
		\begin{align*}
			&\quad\,-\sum^3_{m=1}\int^T_0(\PP \widehat{f},iv\cdot k\widehat{\Phi^{j,m}_b})_{L^2_v}dt\\
			&=-\sum^3_{m=1}\int^T_0\big((\widehat{a_\pm}+\widehat{b}\cdot v+(|v|^2-3)\widehat{c})\mu^{1/2},iv\cdot k\widehat{\Phi^{j,m}_b}\big)_{L^2_v}dt\\
			&=-\sum^{3}_{m=1,m\neq j}\int^T_0(v_mv_j\mu^{1/2}\widehat{b_j},|v|^2v_mv_j\mu^{1/2}(-k_m^2)\widehat{\phi_j})_{L^2_v}dt\\
			&\qquad-\sum^{3}_{m=1,m\neq j}\int^T_0(v_mv_j\mu^{1/2}\widehat{b_m},|v|^2v_mv_j\mu^{1/2}(-k_mk_j)\widehat{\phi_j})_{L^2_v}dt\\
			&\qquad+\frac{7}{2}\sum^{3}_{m=1,m\neq j}\int^T_0(v^2_m\mu^{1/2}\widehat{b_m},(v_m^2-1)\mu^{1/2}(-k_mk_j)\widehat{\phi_j})_{L^2_v}dt\\
			&\qquad-\frac{7}{2}\int^T_0(v_m^2\mu^{1/2}\widehat{b_m},(v_m^2-1)\mu^{1/2}(-k^2_j)\widehat{\phi_j})_{L^2_v}dt\\
			&= -7 \sum^3_{m=1}\int^T_0(\widehat{b_j},(-k_m^2)\widehat{\phi_j})dt=7\int^T_0|\widehat{b_j}(t,k)|^2dt.
		\end{align*}Note that $\int_{\R^3}v_m^2(v_m^2-1)\mu\,dv=2$, $\int_{\R^3}v_m^2(v_j^2-1)\mu\,dv=0$, $\int_{\R^3}v_m^2v_j^2|v|^2\mu\,dv=7$ when $m\neq j$. 
		Since $|k|\ge 1$, it holds that 
		\begin{align}\label{61}
			|\widehat{\Phi^{j,m}_b}(t,k)|\lesssim \mu^{1/4}|k||\widehat{\phi_j}(t,k)|\lesssim\mu^{1/4}|k|^2|\widehat{\phi_j}(t,k)|=\mu^{1/4}|\widehat{b}(t,k)|,
		\end{align}
		and then we have $|\widehat{\Phi^{j,m}_b}(t,k)|_{L^2_v}\lesssim |\widehat{b}(t,k)|$. Thus,
		\begin{align*}
			|S_1|&\lesssim |\widehat{f}(k,T)|^2_{L^2_v}+|\widehat{f_0}(k)|_{L^2_v}^2.
		\end{align*}
		For $S_2$, we first notice that from the second equation of \eqref{28}, 
		\begin{align*}
			|\partial_t\widehat{b_j}|\lesssim |k|\big(|\widehat{a_+}+\widehat{a_-}|+|\widehat{c}|+|\{\I-\P\}\widehat{f}(t,k)\big)+|(\widehat{g_+}+\widehat{g_-},\zeta)_{L^2_v}|.
		\end{align*}
		Thus, noticing $|k|\ge 1$, 
		\begin{align*}
			|S_2|&\le \int^T_0|(\widehat{(\II-\PP)f},\partial_t\widehat{\Phi^{j,m}_b})_{L^2_v}|dt + \int^T_0|(\PP\widehat{f},\partial_t\widehat{\Phi^{j,m}_b})_{L^2_v}|dt\\
			&\lesssim \eta\int^T_0\frac{|\partial_t\widehat{b_j}|^2}{|k|^2}dt+C_\eta\int^T_0|\{\I-\P\}\widehat{f}|^2_Ddt + C_\eta\int^T_0|\widehat{c}(t,k)|^2dt\\
			&\lesssim \eta\int^T_0|\widehat{a_+}+\widehat{a_-}|^2dt+\eta\int^T_0|(\widehat{g_+}+\widehat{g_-},\zeta)_{L^2_v}|^2dt\\&\qquad+C_\eta\int^T_0|\{\I-\P\}\widehat{f}|^2_Ddt+C_\eta\int^T_0|\widehat{c}(t,k)|^2dt.
		\end{align*}
		Here, in the second inequality, we use the orthogonality for the term $\int^T_0|(\PP\widehat{f},\partial_t\widehat{\Phi^{j,m}_b})_{L^2_v}|dt$ in the $v$-integration. 
		For $S_3$, by H\"{o}lder's inequality, \eqref{60} and \eqref{61}, we have 
		\begin{align*}
			|S_3|&\lesssim \eta\int^T_0|\widehat{b}(t,k)|^2dt+C_\eta\int^T_0|\{\I-\P\}\widehat{f}(t,k)|^2_Ddt.
		\end{align*}
		For the term $S_4$, noticing $|k|\ge 1$ and \eqref{60}, 
		\begin{align*}
			|S_4|&\lesssim \int^T_0|\widehat{\nabla_x\phi}||k|^2|\widehat{\phi_j}|dt
			\lesssim C_\eta\int^T_0|\widehat{\nabla_x\phi}|^2\,dt+\eta\int^T_0|\widehat{b}(t,k)|^2dt.
		\end{align*}
		For the term $S_5$, noticing \eqref{60}, we have 
		\begin{align*}
			|\sum_\pm S_5|&\lesssim \eta\int^T_0|\widehat{b}|^2dt+C_\eta\int^T_0|(\widehat{g_+}+\widehat{g_-},\zeta)_{L^2_v}|^2dt.
		\end{align*}
		Combining the above estimates and choosing $\eta>0$ sufficiently small, we have 
		\begin{multline}\label{42}
			\int^T_0|\widehat{b}(t,k)|^2dt\lesssim |\widehat{f}(k,T)|^2_{L^2_v}+|\widehat{f_0}(k)|_{L^2_v}^2+\eta\int^T_0|\widehat{a_+}+\widehat{a_-}|^2dt+C_\eta\int^T_0|\widehat{c}(t,k)|^2dt\\+C_\eta\int^T_0|\{\I-\P\}\widehat{f}(t,k)|_D^2dt+C_\eta\int^T_0|\widehat{\nabla_x\phi}|^2\,dt+C_\eta\int^T_0|(\widehat{g_+}+\widehat{g_-},\zeta)_{L^2_v}|^2dt.
		\end{multline}

		\medskip \noindent{\bf Step 3. Estimate of $\widehat{a_+}+\widehat{a_-}$.}
		Now we consider $\widehat{a_+}+\widehat{a_-}$. We set 
		\begin{align*}
			\widehat{\Phi} = \widehat{\Phi_a} = (|v|^2-10)iv\cdot k\widehat{\phi_a}\mu^{1/2},
		\end{align*}
		where $\widehat{\phi_a}(t,k)=(\widehat{\phi_a}_+(t,k),\widehat{\phi_a}_-(t,k))$ is a solution to 
		\begin{align*}
			|k|^2\widehat{\phi_a}_+(t,k)= |k|^2\widehat{\phi_a}_-(t,k)  = \widehat{a_+}+\widehat{a_-}.
		\end{align*}
		For this choice we have 
		\begin{align*}
			&\quad\,-\sum_\pm\int^T_0(\PP f,iv\cdot k\widehat{\Phi_{a\pm}})_{L^2_v}dt\\
			&=-\sum_\pm\sum_{j,n}\int^T_0\big((\widehat{a_\pm}+\widehat{b}\cdot v+(|v|^2-3)\widehat{c})\mu^{1/2},v_jv_n(|v|^2-10)\mu^{1/2}(-k_jk_n)\widehat{\phi_{a\pm}}\big)_{L^2_v}dt\\
			&= 5\sum_\pm\int^T_0\widehat{a_\pm}|k|^2\overline{\widehat{\phi_a}_\pm}dt
			= 5\int^T_0|\widehat{a_+}+\widehat{a_-}|^2dt.
		\end{align*}
		Note that $((|v|^2-3)\mu^{1/2},v_jv_n(|v|^2-10)\mu^{1/2})_{L^2_v}=0$. The estimate of $S_1$ is similar to the previous case. 
		For $S_2$, we first notice that from the second equation of \eqref{28}, 
		\begin{align*}
			|\partial_t(\widehat{a_+}+\widehat{a_-})|\lesssim |k||\widehat{b}|.
		\end{align*}
		Thus, noticing $|k|\ge 1$, 
		\begin{align*}
			|S_2|&\le \int^T_0|(\widehat{(\II-\PP)f},\partial_t\widehat{\Phi_a})_{L^2_v}|dt + \int^T_0|(\PP\widehat{f},\partial_t\widehat{\Phi_a})_{L^2_v}|dt\\
			&\lesssim \int^T_0\frac{|\partial_t(\widehat{a_+}+\widehat{a_-})|^2}{|k|^2}dt+\int^T_0|\{\I-\P\}\widehat{f}|^2_Ddt + \int^T_0|\widehat{b}(t,k)|^2dt\\
			&\lesssim \int^T_0|\widehat{b}(t,k)|^2dt+\int^T_0|\{\I-\P\}\widehat{f}|^2_Ddt.
		\end{align*}
		Here in the second line we used the orthogonality in the $v$-integration for the term  $\int^T_0|(\PP\widehat{f},\partial_t\widehat{\Phi_a})_{L^2_v}|dt$. 
		Similar to the previous case, we have 
		\begin{align*}
			|S_3|&\lesssim \eta\int^T_0|\widehat{a_+}+\widehat{a_-}|^2dt+C_\eta\int^T_0|\{\I-\P\}\widehat{f}(t,k)|^2_Ddt.
		\end{align*}
		For the term $S_4$, noticing the sign $\pm$, we have $\sum_\pm S_4=0$. 
		For $S_5$, we have 
		\begin{align*}
			|\sum_\pm S_5|&\lesssim \eta\int^T_0|\widehat{a_+}+\widehat{a_-}|^2dt+C_\eta\int^T_0|(\widehat{g_+}+\widehat{g_-},\zeta)_{L^2_v}|^2dt.
		\end{align*}
		Combining the above estimates and choosing $\eta>0$ sufficiently small, we have 
		\begin{align}\notag\label{43}
			\int^T_0\sum_\pm|\widehat{a_+}+\widehat{a_-}|^2dt&\lesssim |\widehat{f}(k,T)|^2_{L^2_v}+|\widehat{f_0}(k)|_{L^2_v}^2+\int^T_0|\widehat{b}(t,k)|^2dt\\&\qquad+\int^T_0|\{\I-\P\}\widehat{f}(t,k)|_D^2dt+\int^T_0|(\widehat{g_+}+\widehat{g_-},\zeta)_{L^2_v}|^2dt.
		\end{align}

		\medskip \noindent{\bf Step 4. Estimate of $\widehat{a_+}-\widehat{a_-}$ and $\widehat{E}$.}
		Set 
		\begin{align*}
			\widehat{\Phi} = \widehat{\Phi_a} = (|v|^2-10)iv\cdot k\widehat{\phi_a}\mu^{1/2},
		\end{align*}
		where $\widehat{\phi_a}(t,k)=(\widehat{\phi_a}_+(t,k),\widehat{\phi_a}_-(t,k))$ is a solution to 
		\begin{align*}
			|k|^2\widehat{\phi_a}_+(t,k) = \widehat{a_+}-\widehat{a_-},\quad |k|^2\widehat{\phi_a}_-(t,k) = \widehat{a_-}-\widehat{a_+}.
		\end{align*}
		Then the macroscopic part reduces to 
		\begin{align*}
			&\quad\,-\sum_\pm\int^T_0(\PP f,iv\cdot k\widehat{\Phi_{a\pm}})_{L^2_v}dt\\
			&=-\sum_\pm\sum_{j,n}\int^T_0\big((\widehat{a_\pm}+\widehat{b}\cdot v+(|v|^2-3)\widehat{c})\mu^{1/2},v_jv_n(|v|^2-10)\mu^{1/2}(-k_jk_n)\widehat{\phi_a}_\pm\big)_{L^2_v}dt\\
			&= 5\sum_\pm\int^T_0\widehat{a_\pm}|k|^2\overline{\widehat{\phi_a}_\pm}dt
			= 5\int^T_0|\widehat{a_+}-\widehat{a_-}|^2dt.
		\end{align*}
		For the part $S_2$, by \eqref{27}, we have
		\begin{align*}
			|\partial_t\widehat{\phi_{a\pm}}|\le |k|^{-2}|\widehat{\nabla_x\cdot G}|\le |k|^{-1}|\widehat{G}|,
		\end{align*} and hence, 
		\begin{align*}
			\sum_\pm |S_2| &\le \sum_\pm\int^T_0|(\widehat{\PP f},(|v|^2-10)iv\cdot k\partial_t\widehat{\phi_a}\mu^{1/2})_{L^2_v}|dt\\
			&\qquad+\sum_\pm\int^T_0|({(\II-\PP)\widehat{f}},(|v|^2-10)iv\cdot k\partial_t\widehat{\phi_a}\mu^{1/2})_{L^2_v}|dt\\
			&\lesssim \eta^2\int^T_0|\widehat{b}|^2\,dt + C_\eta\int^T_0|\{\I-\P\}\widehat{f}|^2_D\,dt.
		\end{align*}
		For the part $S_4$, we observe from \eqref{2} that  
		\begin{align*}
			\sum_\pm S_4 &= \sum_\pm\sum_{j,n}\mp\int^T_0(\widehat{\partial^{e_j}\phi} \mu^{1/2},(|v|^2-10)iv_jv_n k_n\widehat{\phi_{a\pm}}\mu^{1/2})_{L^2_v}dt\\
			&=5\sum_\pm\sum_{j}\mp\int^T_0(\widehat{\partial^{e_j}\phi}\,|\,ik_j{\widehat{\phi_{a\pm}}})dt\\
			&= 10\sum_j\frac{ik_j}{|k|^2}\int^T_0(\widehat{\partial^{e_j}\phi}\,|\,{\widehat{a_+}-\widehat{a_-}})dt\\
			&= -10 \int^T_0(\widehat{\phi}\,|\,\widehat{-\Delta_x\phi})dt= -10\int^T_0|\widehat{\nabla_x\phi}|^2dt.
		\end{align*}
		This gives the dissipation rate of $\widehat{\nabla_x\phi}(t,k)$ when $|k|\ge 1$. Notice that when $k=0$, $\widehat{\nabla_x\phi}=ik\widehat{\phi}=0$. 
		The estimates on terms $S_1,S_3,S_5$ are similar to Step 3 and we omit it for brevity. 
		Therefore, combining the above estimates we have
		\begin{multline}\label{41a}
			\int^T_0|\widehat{a_+}-\widehat{a_-}|^2dt + \int^T_0|\widehat{\nabla_x\phi}|^2dt
			\lesssim|\widehat{f}(T,k)|^2_{L^2_v}+|\widehat{f_0}(k)|_{L^2_v}^2+\eta^2\int^T_0|\widehat{b}|^2\,dt\\
			+C_\eta\int^T_0|\{\I-\P\}\widehat{f}(t,k)|_D^2dt+\int^T_0|(\widehat{g_+}+\widehat{g_-},\zeta)_{L^2_v}|^2dt.
		\end{multline}

		\medskip \noindent{\bf Step 5. Energy estimate.}
		Now taking combination $\eqref{41a}+\kappa\times\eqref{41}+\kappa^2\times\eqref{42}+\kappa^3\times\eqref{43}$ and choosing $\eta<<\kappa<<1$, we have for $|k|\ge 1$ that 
		\begin{multline*}
			\int^T_0\big|[\wh{a_+},\wh{a_-},\wh{b},\wh{c}]\big|^2dt+\int^T_0|\widehat{\nabla_x\phi}|^2\,dt\lesssim|\widehat{f}(k,T)|^2_{L^2_v}+|\widehat{f_0}(k)|_{L^2_v}^2\\+ \int^T_0|\{\I-\P\}\widehat{f}(t,k)|_D^2dt+\int^T_0|(\widehat{g_+}+\widehat{g_-},\zeta)_{L^2_v}|^2dt.
		\end{multline*}
		Note that $2|\widehat{a_+}|^2+2|\widehat{a_-}|^2=|\widehat{a_+}+\widehat{a_-}|^2+|\widehat{a_+}-\widehat{a_-}|^2$. Together with \eqref{48}, taking the square root and summation over $k\in\Z^3$, we have 
		\begin{multline}\label{52}
			\|(\widehat{a_+},\widehat{a_-},\widehat{b},\widehat{c})\|_{L^1_kL^2_T}+\|\widehat{E}\|_{L^1_kL^2_T}\lesssim \|\widehat{f}\|_{L^1_kL^\infty_TL^2_v}+\|\widehat{f_0}\|_{L^1_kL^2_v}+\|\{\I-\P\}\widehat{f}\|_{L^1_kL^2_TL^2_{D}}\\ +\|\widehat{E}\|_{L^1_kL^\infty_T}\|\widehat{E}\|_{L^1_kL^2_T}+\int_{\Z^3_k}\Big(\int^T_0|(\widehat{g_+}+\widehat{g_-},\zeta)_{L^2_v}|^2dt\Big)^{1/2}d\Sigma(k).
		\end{multline}

		For the last term, we will use the trick as \eqref{48}. Noticing \eqref{40a}, we estimate the term in $g_\pm$ one by one. For the term $(\nabla_x\phi\cdot \nabla_vf_\pm)^\wedge$, noticing $\zeta(v)$ is smooth and has exponential decay, we take integration by parts with respect to $v$ to obtain  
		\begin{align}\label{84b}\notag
			&\quad\,\int_{\Z^3_k}\Big(\int^T_0\big|\big((\nabla_x\phi\cdot \nabla_vf_\pm)^\wedge,\zeta(v)\big)_{L^2_v}\big|^2 dt\Big)^{1/2}d\Sigma(k)\\
			&\lesssim\notag \int_{\Z^3_k}\Big(\int^T_0\big(\int_{\Z^3}|\widehat{E}(k-l)||\widehat{f}(l)|_{L^2_{D}}\,d\Sigma(l)\big)^2dt\Big)^{1/2}d\Sigma(k)\\
			&\lesssim\notag \int_{\Z^3_k}\int_{\Z^3}\Big(\int^T_0|\widehat{E}(k-l)|^2|\widehat{f}(l)|_{L^2_{D}}^2dt\Big)^{1/2}d\Sigma(l)d\Sigma(k)\\
			&\lesssim\notag \int_{\Z^3_k}\int_{\Z^3}\sup_{0\le t\le T}|\widehat{E}(k-l)|\Big(\int^T_0|\widehat{f}(l)|_{L^2_{D}}^2dt\Big)^{1/2}d\Sigma(l)d\Sigma(k)\\
			&\lesssim \|\widehat{E}\|_{L^1_kL^\infty_T}\|\widehat{f}\|_{L^1_kL^2_TL^2_{D}}.
		\end{align}
		Similarly, for the term $\frac{1}{2}((\nabla_x\phi\cdot vf_\pm)^\wedge$, we have 
		\begin{align}\label{2.23}
			\quad\,\int_{\Z^3_k}\Big(\int^T_0\big|\big(\frac{1}{2}((\nabla_x\phi\cdot vf_\pm)^\wedge,\zeta(v)\big)_{L^2_v}\big|^2 dt\Big)^{1/2}d\Sigma(k)
			&\lesssim \|\widehat{E}\|_{L^1_kL^\infty_T}\|\widehat{f}\|_{L^1_kL^2_TL^2_{D}}.
		\end{align}
		For the term $\widehat{\Gamma_\pm(f,f)}$, we have from \eqref{35a} that 
		\begin{align}\label{2.24}\notag
			&\quad\,\int_{\Z^3_k}\Big(\int^T_0\big|\big(\widehat{\Gamma_\pm(f,f)},\zeta(v)\big)_{L^2_v}\big|^2dt\Big)^{1/2}d\Sigma(k)\\
			&=\notag \int_{\Z^3_k}\Big(\int^T_0\big|\int_{\Z^3}\big(\Gamma_\pm(\widehat{f}(k-l),\widehat{f}(l)),\zeta(v)\big)_{L^2_v}d\Sigma(l)\big|^2dt\Big)^{1/2}d\Sigma(k)\\
			&\notag\lesssim \int_{\Z^3_k}\Big(\int^T_0\big(\int_{\Z^3}|\widehat{f}(k-l)|_{L^2_v}|\widehat{f}(l)|_{L^2_{D}}d\Sigma(l)\big)^{2}dt\Big)^{1/2}d\Sigma(k)\\
			&\notag\lesssim \int_{\Z^3_k}\int_{\Z^3}\sup_{0\le t\le T}|\widehat{f}(k-l)|_{L^2_v}\Big(\int^T_0|\widehat{f}(l)|_{L^2_{D}}^2dt\Big)^{1/2}d\Sigma(l)d\Sigma(k)\\
			&\lesssim \|\widehat{f}\|_{L^1_kL^\infty_TL^2_v}\|\widehat{f}\|_{L^1_kL^2_TL^2_{D}}.
		\end{align}
		Plugging the above estimate into \eqref{52}, we obtain \eqref{53} and complete the proof of Theorem \ref{Thm21}. 
		\qe\end{proof}

	\section{Macroscopic estimates for finite channel}\label{MacroFinite}
	In this section, we write $\widehat{\cdot}$ to denote the Fourier transform with respect to $\bar{x}\in \T^2$. We will derive the macroscopic estimates in the case of finite channel. 
	Consider the following problem  
	\begin{align}\label{40abc}
		\partial_t{f_\pm}+v_1\partial_{x_1}{f_\pm} &+\bar{v}\cdot\nabla_{\bar{x}}f_\pm\pm {\nabla_x\phi}\cdot v\mu^{1/2} - L_\pm {f} = {g_\pm},
	\end{align}
	with initial data $(f_0,E_0)$ and boundary condition 
	\begin{equation}\label{specular_finite2}\begin{aligned}
			\widehat{f}(t,-1,\bar{k},v_1,\bar{v})|_{v_1>0} &= \widehat{f}(t,-1,\bar{k},-v_1,\bar{v}),\\
			\widehat{f}(t,1,\bar{k},v_1,\bar{v})|_{v_1<0} &= \widehat{f}(t,1,\bar{k},-v_1,\bar{v}),
		\end{aligned}
	\end{equation} where 
	\begin{align*}
		g_\pm = \pm&\nabla_x\phi\cdot\nabla_vf_\pm\mp\frac{1}{2}\nabla_x\phi\cdot vf_\pm+\Gamma_\pm(f,f),
	\end{align*}
	and the potential is determined by Poisson equation:
	\begin{align}
		-\Delta_x\phi &= a_+-a_-,\label{40abcd}
	\end{align}
	with the $zero$ Neumann boundary condition 
	\begin{align}\label{97a}
		\partial_{x_1}\phi = 0,\ \text{ on } x_1=\pm 1. 
	\end{align}
	As in the torus case, we denote $\zeta(v)$ to be a smooth function satisfying $
	\zeta(v)\lesssim e^{-\lambda|v|^2},$ for some $\lambda >0$. The function $\zeta(v)$ may change from line to line. Then we have the following macroscopic estimate in anisotropic case.



	\begin{Thm}\label{Thm41f}
		Let $\gamma\ge -3$ for Landau case, $\gamma>\max\{-3,-2s-3/2\}$ for Boltzmann case and $T>0$. Let $f$ be the solution of \eqref{40abc}, \eqref{specular_finite2}, \eqref{40abcd} and \eqref{97a} in finite channel with initial data satisfying \eqref{conservation_finite}. Then 
		\begin{align*}
			&\quad\,\sum_{|\alpha|\le 1} \|\partial^\alpha(\widehat{a_+},\widehat{a_-},\widehat{b},\widehat{c})\|_{L^1_{\bar{k}}L^2_TL^2_{x_1}}+\sum_{|\alpha|\le 1}\|\widehat{\partial^\alpha E}\|_{L^1_{\bar{k}}L^2_TL^2_{x_1}}\\
			\notag&\lesssim \sum_{|\alpha|\le 1}\big(\|\widehat{\partial^\alpha f}(T)\|_{L^1_{\bar{k}}L^2_{x_1,v}}+\|\widehat{\partial^\alpha f_0}\|_{L^1_{\bar{k}}L^2_{x_1,v}}\big)+\sum_{|\alpha|\le1}\|\{\I-\P\}\widehat{\partial^\alpha f}\|_{L^1_{\bar{k}}L^2_TL^2_{x_1}L^2_D}\\
			&\qquad+\sum_{|\alpha|\le 1}\big(\|\widehat{\partial^\alpha  E}\|_{L^1_{\bar{k}}L^\infty_TL^2_{x_1}}+\|\widehat{\partial^\alpha f}\|_{L^1_{\bar{k}}L^\infty_TL^2_{x_1}L^2_v}\big)\sum_{|\alpha|\le 1}\|\widehat{\partial^\alpha f}\|_{L^1_{\bar{k}}L^2_TL^2_{x_1}L^2_{D}}\\
			&\qquad+\|\wh{E}\|_{L^1_{\bar{k}}L^\infty_TL^2_{x_1}}\|\wh{E}\|_{L^1_{\bar{k}}L^2_TL^2_{x_1}}.
		\end{align*}
	\end{Thm}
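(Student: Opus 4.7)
The plan is to mimic the proof of Theorem~\ref{Thm21}, with two essential new ingredients: the Fourier transform is only taken in $\bar x\in\T^2$, so the $x_1$-direction is handled directly in physical space together with the specular-reflection boundary; and we must include the spatial derivative $\pa^\alpha$ for $|\alpha|\le 1$ to capture the $x_1$-regularity. First I would apply $\pa^\alpha$ to \eqref{40abc}, take the same velocity moments as in \eqref{19}--\eqref{98a}, and Fourier transform in $\bar x$ to obtain fluid-type equations for $\widehat{\pa^\alpha(a_\pm,b,c)}$ and $\widehat{\pa^\alpha G}$, now with $(\pa_{x_1},i\bar k)$ playing the role of the spatial gradient. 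The zero mode $\bar k=0$ would be split off and treated exactly as in \eqref{48} using the conservation laws \eqref{conservation_finite}, producing the quadratic term $\|\wh E\|_{L^1_{\bar k}L^\infty_T L^2_{x_1}}\|\wh E\|_{L^1_{\bar k}L^2_T L^2_{x_1}}$.

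For $\bar k\ne 0$ and each macroscopic component, I would pair $\widehat{\pa^\alpha f_\pm}$ with a test function $\widehat\Phi$ of the form $p(v)(v\cdot\widehat{\na_x\phi_\bullet})\mu^{1/2}$ for appropriate polynomials $p(v)$, where the potential $\widehat{\phi_\bullet}(t,x_1,\bar k)$ solves the one-dimensional elliptic problem
\begin{equation*}
-\pa_{x_1}^2\widehat{\phi_\bullet}+|\bar k|^2\widehat{\phi_\bullet}=\widehat{\pa^\alpha\bullet},\qquad x_1\in[-1,1],
\end{equation*}
with boundary data tuned to the structure of $\pa^\alpha$: Dirichlet $\widehat{\phi_\bullet}(\pm 1,\bar k)=0$ when $\pa^\alpha=\pa_{x_1}$ and Neumann $\pa_{x_1}\widehat{\phi_\bullet}(\pm 1,\bar k)=0$ otherwise, exactly as sketched in the Introduction. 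For the coupled $\widehat{\pa^\alpha(a_+-a_-)}$--$\widehat{\pa^\alpha E}$ estimate, I would take $\widehat{\phi_{a\pm}}=\pm\pa^\alpha\widehat\phi$; the Neumann boundary condition \eqref{97a} on $\widehat\phi$ ensures that the required Dirichlet or Neumann data is inherited by $\widehat{\phi_{a\pm}}$ in each case. An integration by parts in $(x_1,\bar x)$ then converts the $\pm\widehat{\na_x\phi}\cdot v\mu^{1/2}$-term into $\|\widehat{\pa^\alpha\na_x\phi}\|_{L^2_{x_1}}^2=\|\widehat{\pa^\alpha E}\|_{L^2_{x_1}}^2$, yielding the $\widehat E$-dissipation.

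The principal new difficulty is to verify that the boundary contribution
\begin{equation*}
\int_{\R^3}v_1\bigl[\widehat{\pa^\alpha f_\pm}(x_1,\bar k,v)\,\overline{\widehat\Phi(x_1,v)}\bigr]_{x_1=-1}^{x_1=1}\,dv
\end{equation*}
arising from integration by parts in $x_1$ vanishes, and this rests on a careful parity argument in $v_1$. From \eqref{specular_finite2}, $\widehat f|_{x_1=\pm 1}$ is \emph{even} in $v_1$; by differentiating the even-in-$v_1$ extension across $x_1=\pm 1$ one finds that $\widehat{\pa_{x_1}f}|_{x_1=\pm 1}$ is \emph{odd} in $v_1$, whereas $\widehat{\pa^\alpha f}|_{x_1=\pm 1}$ remains even for $\pa^\alpha\in\{I,\pa_{x_2},\pa_{x_3}\}$. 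The Dirichlet/Neumann split above is precisely chosen so that $\widehat\Phi|_{x_1=\pm 1}$ has the \emph{same} $v_1$-parity as $\widehat{\pa^\alpha f_\pm}|_{x_1=\pm 1}$ (both odd in the Dirichlet case, both even in the Neumann case), so that $v_1\,\widehat{\pa^\alpha f_\pm}\,\overline{\widehat\Phi}$ is odd in $v_1$ at the boundary and integrates to zero.

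With the boundary terms absent, the remainder of the estimate proceeds in parallel with Theorem~\ref{Thm21}: the target macroscopic dissipation is extracted from the orthogonality identity in the $v$-integration combined with the elliptic bound $\|\pa_{x_1}\widehat{\phi_\bullet}\|_{L^2_{x_1}}^2+|\bar k|^2\|\widehat{\phi_\bullet}\|_{L^2_{x_1}}^2\lesssim\|\widehat{\pa^\alpha\bullet}\|_{L^2_{x_1}}^2$; the $\pa_t\widehat\Phi$ term is controlled via the fluid equations after an $\eta$-absorption; and the linear $L\widehat{\pa^\alpha f}$ and microscopic contributions are absorbed into $\|\{\I-\P\}\widehat{\pa^\alpha f}\|_{L^2_D}$ through \eqref{36b}. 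The nonlinear pieces from $\widehat{\pa^\alpha g_\pm}$ are estimated as in \eqref{84b}--\eqref{2.24}, with $L^1_k$-convolution replaced by $L^1_{\bar k}$-convolution on $\Z^2$ and H\"older's inequality applied to the $x_1$-integral on $[-1,1]$, producing the displayed quadratic terms. Summing over $|\alpha|\le 1$, taking square roots, and integrating over $\bar k\in\Z^2$ then yields the claim.
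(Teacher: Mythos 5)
Your overall strategy — moment equations, dual test functions built from one-dimensional elliptic problems in $x_1$, and a $v_1$-parity argument to cancel the boundary terms — is the same as the paper's, but two concrete steps would fail as written. First, the zero mode $\bar k=0$ cannot be ``treated exactly as in \eqref{48}'': unlike the torus case, at $\bar k=0$ the quantities $\wh{a_\pm},\wh b,\wh c$ are still genuine functions of $x_1$, and the conservation laws \eqref{conservation_finite} only control their $x_1$-averages, not their pointwise values. In the paper the test-function machinery is applied for all $\bar k$ including $\bar k=0$ (the elliptic problems on $[-1,1]$ still make sense there); the conservation laws and the specular condition enter only to guarantee solvability of the pure Neumann problems at $\bar k=0$, and the one genuinely unsolvable case ($\wh c$ with $\pa=I$, $\bar k=0$) is handled by Poincar\'e's inequality combined with the energy conservation law, see \eqref{c0} — that is where the quadratic term $\|\wh E\|_{L^1_{\bar k}L^\infty_TL^2_{x_1}}\|\wh E\|_{L^1_{\bar k}L^2_TL^2_{x_1}}$ actually arises. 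With your splitting, the macroscopic dissipation at $\bar k=0$ (a function of $x_1$, not a constant) is simply not produced.

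Second, your blanket rule ``Dirichlet when $\pa^\alpha=\pa_{x_1}$, Neumann otherwise'' is correct for the $\wh c$ and $\wh{a_\pm}$ test functions, whose velocity prefactors are even in $v_1$, but it is wrong for $\wh b$. The test function $\wh{\Phi^{j,m}_b}$ mixes $|v|^2v_mv_j\,\pa_{x_m}\phi_j$ (odd in $v_1$ when exactly one of $j,m$ equals $1$) with $(v_m^2-1)\pa_{x_j}\phi_j$ (even), so the boundary condition must be chosen per component: for $\pa=\pa_{x_1}$ one needs Neumann for $\phi_1$ and Dirichlet for $\phi_2,\phi_3$, and the reverse for $\pa=I,\pa_{x_2},\pa_{x_3}$, as in \eqref{118a}--\eqref{118b}. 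With your choice, e.g.\ $\wh{\phi_1}(\pm1)=0$ for $\pa=\pa_{x_1}$, the contribution $(v_m^2-1)\pa_{x_1}\wh{\phi_1}(\pm1)$ survives at the boundary; it is even in $v_1$ while $\wh{\pa_{x_1}f}|_{x_1=\pm1}$ is odd, so $v_1\,\wh{\pa_{x_1}f}\,\overline{\wh{\Phi_b}}$ is even in $v_1$ and the boundary term does not vanish. A minor further point: the oddness of $\wh{\pa_{x_1}f}$ at $x_1=\pm1$ should not be argued by ``differentiating the even extension,'' since specular reflection gives evenness in $v_1$ only on the boundary itself; the rigorous route is to read $v_1\pa_{x_1}\wh f$ off the equation at $x_1=\pm1$, as in \eqref{115}.
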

	\begin{proof}
		
		Let $|\alpha|\le 1$. Acting $\partial :=\partial^\alpha$ to \eqref{40abc} and taking the Fourier transform with respect to $\bar{x}$, we have 
		\begin{align}\label{40dh}
			\partial_t\widehat{\partial f_\pm}+v_1\partial_{x_1}\widehat{\partial f_\pm} &+i\bar{v}\cdot{\bar{k}}\widehat{\partial f_\pm}\pm (\partial{\nabla_x\phi})^\wedge\cdot v\mu^{1/2} - L_\pm \widehat{\partial f} = \widehat{\partial g_\pm},
		\end{align}
		with the specular reflection condition 
		\begin{equation}\label{reflection}\begin{aligned}
				\widehat{f}(t,-1,\bar{k},v_1,\bar{v})|_{v_1>0} &= \widehat{f}(t,-1,\bar{k},-v_1,\bar{v}),\\
				\widehat{f}(t,1,\bar{k},v_1,\bar{v})|_{v_1<0} &= \widehat{f}(t,1,\bar{k},-v_1,\bar{v}).
			\end{aligned}
		\end{equation}

		Let $\widehat{\Phi}(t,x_1,\bar{k},v)\in C^1((0,+\infty)\times(-1,1)\times\R^3)$ with $\bar{k}=(k_2,k_3)\in\Z^2$ be a test function. Taking the inner product of $\widehat{\Phi}(t,x_1,\bar{k},v)$ and \eqref{40dh} with respect to $(x_1,v)$ and integrating the resulting identity with respect to $t$ over $[0,T]$ for any $T>0$, we obtain 
		\begin{multline*}
			\quad\,(\widehat{\partial f_\pm},\widehat{\Phi})_{L^2_{x_1,v}}(T)-(\widehat{\partial f_\pm},\widehat{\Phi})_{L^2_{x_1,v}}(0) - \int^T_0(\widehat{\partial f_\pm},\partial_t\widehat{\Phi})_{L^2_{x_1,v}}\,dt\\
			-\int^T_0(\widehat{\partial f_\pm},v\cdot\widehat{\nabla_{x_1,\bar{x}}\Phi})_{L^2_{x_1,v}} \,dt
			+\int^T_0(v_1\widehat{\partial f_\pm}(1),\widehat{\Phi}(1))_{L^2_v}\,dt- \int^T_0(v_1\widehat{\partial f_\pm}(-1),\widehat{\Phi}(-1))_{L^2_v}\,dt\\
			\pm \int^T_0((\partial{\nabla_x\phi})^\wedge\cdot v\mu^{1/2},\widehat{\Phi})_{L^2_{x_1,v}}\,dt - \int^T_0(L_\pm \widehat{\partial f},\widehat{\Phi})_{L^2_{x_1,v}}\,dt = \int^T_0(\widehat{\partial g_\pm},\widehat{\Phi})_{L^2_{x_1,v}}\,dt.
		\end{multline*}
		Using the decomposition $\widehat{f_\pm}=\P\widehat{f_\pm}+\{\I-\P\}\widehat{f_\pm}$, we have 
		\begin{align}\label{100a}
			-\int^T_0(\widehat{\partial \P_\pm f},v\cdot\widehat{\nabla_{x_1,\bar{x}}\Phi})_{L^2_{x_1,v}} \,dt = \sum_{j=1}^6S_j,
		\end{align}
		where $S_j$ are defined by 
		\begin{align*}
			S_1 &= -(\widehat{\partial f_\pm},\widehat{\Phi})_{L^2_{x_1,v}}(T)+(\widehat{\partial f_\pm},\widehat{\Phi})_{L^2_{x_1,v}}(0),\quad S_2 = \int^T_0(\widehat{\partial f_\pm},\partial_t\widehat{\Phi})_{L^2_{x_1,v}}\,dt,\\
			S_3 &= \int^T_0(({\partial (\II-\PP)f})^\wedge,v\cdot\widehat{\nabla_{x_1,\bar{x}}\Phi})_{L^2_{x_1,v}} \,dt,\\
			S_4&= \int^T_0(L_\pm \widehat{\partial f},\widehat{\Phi})_{L^2_{x_1,v}}\,dt+\int^T_0(\widehat{\partial g_\pm},\widehat{\Phi})_{L^2_{x_1,v}},\\
			S_5 &= \mp\int^T_0((\partial{\nabla_x\phi})^\wedge\cdot v\mu^{1/2},\widehat{\Phi})_{L^2_{x_1,v}}\,dt,\\
			S_6 &= -\int^T_0(v_1\widehat{\partial f_\pm}(1),\widehat{\Phi}(1))_{L^2_v}\,dt+ \int^T_0(v_1\widehat{\partial f_\pm}(-1),\widehat{\Phi}(-1))_{L^2_v}\,dt.
		\end{align*}

		\medskip \noindent{\bf Step 1. Estimate on $\widehat{c}(t,x_1,\bar{k})$.} We choose the following test function 
		\begin{align*}
			\widehat{\Phi} = \widehat{\Phi_c} = (|v|^2-5)\big(v\cdot\widehat{\nabla_{x_1,\bar{x}}\phi_c}(t,x_1,\bar{k})\big)\mu^{1/2},
		\end{align*}
		where $\phi_c$ solves 
		\begin{equation}\label{4.120a}\left\{
			\begin{aligned}
				&-\partial^2_{x_1}\widehat{\phi_c}+|\bar{k}|^2\widehat{\phi_c} = \widehat{\partial c},\\
				&\widehat{\phi_c}(\pm 1,\bar{k})=0, \qquad \text{ if }\partial=\partial_{x_1},\\
				&\partial_{x_1}\widehat{\phi_c}(\pm 1,\bar{k})=0, \quad\text{ if }\partial=I,\partial_{x_2},\partial_{x_3}.
			\end{aligned}\right.
		\end{equation}
		Then by standard elliptic estimate, we have 
		\begin{align}\label{99cc}
			\|\pa^2_{x_1}\widehat{\phi_c}\|_{L^2_{x_1}}+|\bar{k}|\|\pa_{x_1}\widehat{ \phi_c}\|_{L^2_{x_1}}+|\bar{k}|^2\|\widehat{\phi_c}\|_{L^2_{x_1}}\lesssim\|\widehat{\partial c}\|_{L^2_{x_1}}.
		\end{align}
		When $\pa=I$ and $\bar{k}=0$, \eqref{4.120a} is a pure Neumann boundary problem. However, the mean of $\wh{c}|_{\bar{k}=0}$ doesn't vanish: $\int_{-1}^1\wh{c}(x_1,0)\,dx_1\neq 0$ and, we can't find a solution. 
		In order to derive the dissipation estimates for $\wh{c}|_{\bar{k}=0}$, we apply Poincar\'{e}'s inequality and \eqref{conservation_finite} to obtain that 
		\begin{align*}
			\|\wh{c}|_{\bar{k}=0}\|_{L^2_{x_1}} \lesssim \|\pa_{x_1}\wh{c}|_{\bar{k}=0}\|_{L^2_{x_1}}+\Big|\int_{-1}^1\wh{c}|_{\bar{k}=0}dx_1\Big| \lesssim \|\pa_{x_1}\wh{c}|_{\bar{k}=0}\|_{L^2_{x_1}} + \|E\|_{L^2_x}^2. 
		\end{align*}
	Similar to the case of torus, we can estimate the term $\|E\|_{L^2_x}^2$ by 
	\begin{align*}
		\int^T_0\|E\|_{L^2_x}^4\,dt\le \int^T_0\|{E}\|_{L^2_{x_1}L^\infty_{\bar{x}}}^4\,dt
		\le \int^T_0\|\wh{E}\|_{L^2_{x_1}L^1_{\bar{k}}}^4\,dt \le \sup_{0\le t\le T}\|\wh{E}\|_{L^2_{x_1}L^1_{\bar{k}}}^2\int^T_0\|\wh{E}\|^2_{L^2_{x_1}L^1_{\bar{k}}}\,dt.
	\end{align*}
Thus, by Young's inequality $\|\cdot\|_{L^pL^q}\lesssim \|\cdot\|_{L^qL^p}$ $(p\ge q\ge 1)$, we have 
	\begin{align}\label{c0}
		\int^T_0\|\wh{c}|_{\bar{k}=0}\|^2_{L^2_{x_1}}\,dt \lesssim
		\int^T_0\|\pa_{x_1}\wh{c}|_{\bar{k}=0}\|^2_{L^2_{x_1}}\,dt + \|\wh{E}\|_{L^1_{\bar{k}}L^\infty_TL^2_{x_1}}^2\|\wh{E}\|^2_{L^1_{\bar{k}}L^2_TL^2_{x_1}}. 
	\end{align}

To the end of this step, we assume that either $\pa\neq I$ or $|\bar{k}|\neq 0$. 
Next we let $\pa=I,\pa_{x_2},\pa_{x_3}$.  
		Taking inner product of \eqref{4.120a} with $\wh{\phi_c}$ over $x_1\in[-1,1]$, we have 
		\begin{align*}
			 \|\pa_{x_1}\wh{\phi_c}|_{\bar{k}=0}\|_{L^2_{x_1}}
			\lesssim \|\wh{\pa c}|_{\bar{k}=0}\|_{L^2_{x_1}}=0,
		\end{align*}
		when $|\bar{k}|=0$. Here we note that by our setting, $\pa\neq I$ in this case and hence, $\wh{\pa c}|_{\bar{k}=0}=0$. If $|\bar{k}|\ge 1$, we can directly obtain from standard elliptic estimate that 
		\begin{align*}
			\|\pa_{x_1}\wh{\phi_c}\|^2_{L^2_{x_1}}+|\bar{k}|^2\|\wh{\phi_c}\|^2_{L^2_{x_1}}\lesssim \|\wh{c}\|^2_{L^2_{x_1}}.
		\end{align*}
		The above two estimates imply that for any $\bar{k}$, 
		\begin{align}\label{4.11}
			\|\pa_{x_1}\wh{\phi_c}\|^2_{L^2_{x_1}}+|\bar{k}|\|\wh{\phi_c}\|^2_{L^2_{x_1}}\lesssim \|\wh{c}\|^2_{L^2_{x_1}}. 
		\end{align}
		Similarly, since derivative on $t$ doesn't affect the boundary value, we have 
		\begin{align}\label{4.11a}
			\|\pa_t\pa_{x_1}\wh{\phi_c}\|^2_{L^2_{x_1}}+|\bar{k}|^2\|\pa_t\wh{\phi_c}\|^2_{L^2_{x_1}}\lesssim \|\pa_t\wh{c}\|^2_{L^2_{x_1}}.
		\end{align}
		On the other hand, when $\pa=\pa_{x_1}$, \eqref{4.120a} is a Dirichlet boundary problem. Then taking inner product of \eqref{4.120a} with $\wh{\phi_c}$ over $x_1\in[-1,1]$, we have 
		\begin{align*}
			\|\pa_{x_1}\wh{\phi_c}\|^2_{L^2_{x_1}}+|\bar{k}|^2\|\wh{\phi_c}\|^2_{L^2_{x_1}}\lesssim (\widehat{\partial_{x_1} c}, \wh{\phi_c})_{L^2_{x_1}}
			&= (\widehat{c}, \partial_{x_1} \wh{\phi_c})_{L^2_{x_1}}\lesssim \|\wh{c}\|_{L^2_{x_1}}\|\pa_{x_1}\wh{\phi_c}\|_{L^2_{x_1}}. 
		\end{align*}
		This implies that 
		\begin{align}\label{4.12}
			\|\pa_{x_1}\wh{\phi_c}\|_{L^2_{x_1}}+|\bar{k}|\|\wh{\phi_c}\|_{L^2_{x_1}}\lesssim \|\wh{c}\|_{L^2_{x_1}}. 
		\end{align}
		Similarly, 
		\begin{align}\label{4.12a}
			\|\pa_t\pa_{x_1}\wh{\phi_c}\|_{L^2_{x_1}}+|\bar{k}|\|\pa_t\wh{\phi_c}\|_{L^2_{x_1}}\lesssim \|\pa_t\wh{c}\|_{L^2_{x_1}}.
		\end{align}	
		Now we can compute \eqref{100a}. For the left hand side of \eqref{100a}, we have 
		\begin{align*}
			&\quad\,-\int^T_0(\widehat{\partial \PP f},v\cdot\widehat{\nabla_{x_1,\bar{x}}\Phi_{c}})_{L^2_{x_1,v}} \,dt\\
			&= -\sum_{j,m=1}^3\int^T_0(\widehat{\partial a_\pm}+\widehat{\partial b}\cdot v+(|v|^2-3)\widehat{\partial c} ,v_jv_m(|v|^2-5)\mu{(\partial_{x_j}\partial_{x_m}\phi_{c})^\wedge})_{L^2_{x_1,v}} \,dt\\
			&= 10\sum_{j=1}^3\int^T_0(\widehat{\partial c} ,\widehat{-\partial^2_j\phi_{c}})_{L^2_{x_1,v}} \,dt = 10\int^T_0\|\widehat{\partial c}\|^2_{L^2_{x_1}} \,dt.
		\end{align*}
		For the right hand side of \eqref{100a}, by H\"{o}lder's inequality and the elliptic estimate \eqref{4.11} and \eqref{4.12}, we have  
		\begin{align*}
			|S_1|\lesssim \sum_{|\alpha|\le 1}\Big(\|\widehat{\partial^\alpha f}(T)\|_{L^2_{x_1,v}}+\|\widehat{\partial^\alpha f_0}\|_{L^2_{x_1,v}}\Big).
		\end{align*}
		For $S_2$, we see from \eqref{4.11a} and \eqref{4.12a} that 
		\begin{align*}\notag
			|S_2|&\le\int^T_0|(\widehat{\partial f},\partial_t\widehat{\Phi_c})_{L^2_{x_1,v}}|\,dt = \int^T_0|(\{\I-\P\}\widehat{\partial f},\partial_t\widehat{\Phi_c})_{L^2_{x_1,v}}|\,dt\\
			&\notag\lesssim \eta\int^T_0\|\partial_t\widehat{\nabla_x\phi_c}\|^2_{L^2_{x_1}}\,dt+C_\eta\int^T_0\|\{\I-\P\}\widehat{\partial f}\|^2_{L^2_{x_1}L^2_D}\,dt\\
			&\notag\lesssim \eta\int^T_0\|\partial_t\widehat{c}\|^2_{L^2_{x_1}}\,dt+C_\eta\int^T_0\|\{\I-\P\}\widehat{\partial f}\|^2_{L^2_{x_1}L^2_D}\,dt\\
			&\lesssim \eta \int^T_0\|\widehat{\nabla_xb}\|^2_{L^2_{x_1}}\,dt+C_\eta\sum_{|\alpha|\le 1} \int^T_0\|\{\I-\P\}\widehat{\pa^\alpha f}\|^2_{L^2_{x_1}L^2_D}\,dt+\int^T_0\|(\widehat{g},\zeta)_{L^2_v}\|^2_{L^2_{x_1}}\,dt,
		\end{align*}
		where we used the third equation of \eqref{19} in the last inequality. 
		Thanks to \eqref{99cc}, $S_3$ can be bounded as 
		\begin{align*}
			|S_3|\lesssim \eta\int^T_0\|\widehat{\partial c}\|^2_{L^2_{x_1}}\,dt+C_\eta\int^T_0\|\{\I-\P\}\widehat{\nabla_x f}\|^2_{L^2_{x_1}L^2_D}\,dt.
		\end{align*}
		For the term $S_4$, applying \eqref{4.11} and \eqref{4.12}, we have 
		\begin{align*}
			|S_4|\le \eta\int^T_0\|\widehat{ c}\|_{L^2_{x_1}}\,dt+C_\eta\int^T_0\|\{\I-\P\}\widehat{\pa f}\|_{L^2_{x_1}L^2_D}\,dt+C_\eta\int^T_0\|(\widehat{\partial g},\zeta)_{L^2_v}\|_{L^2_{x_1}}\,dt.
		\end{align*}
		For the term $S_5$, using Cauchy-Schwarz's inequality, we have 
		\begin{align*}
			|S_5|&\le \int^T_0((\partial{\nabla_x\phi})^\wedge\cdot v\mu^{1/2},\widehat{\Phi})_{L^2_{x_1,v}}\,dt\\
			&\le \eta \int^T_0\|\widehat{ c}\|_{L^2_{x_1}}\,dt + C_\eta \int^T_0\|\widehat{\partial E}\|_{L^2_{x_1}}\,dt.
		\end{align*}
		For $S_6$, we need to use the specular reflection boundary condition \eqref{reflection}:
		\begin{align}\label{103a}
			\widehat{f}(1,v_1)|_{v_1\neq 0} = \widehat{f}(1,-v_1),\quad\widehat{f}(-1,-v_1)|_{v_1\neq 0}  = \widehat{f}(-1,v_1) .
		\end{align}
		Using the Neumann boundary condition \eqref{97a}, we know that 
		\begin{align*}
			\widehat{\partial_{x_1}\phi}(\pm 1,\bar{k})= 0. 
		\end{align*}
		By the above boundary value for $f$ and $\phi$, we have $g_\pm(-1,-v_1)|_{v_1\neq 0}=g_\pm(-1,v_1)$ and $g_\pm(1,-v_1)|_{v_1\neq 0}=g_\pm(1,v_1)$. Thus, using the equation \eqref{40abc} to define derivative $\partial_{x_1}f_\pm$, one has 
		\begin{align}\label{115}\notag
			-v_1\widehat{\partial_{x_1}f}(-1,\bar{k},-v_1)&=
			\Big(\partial_t{f_\pm} +\bar{v}\cdot\nabla_{\bar{x}}f_\pm\pm {\nabla_x\phi}\cdot v\mu^{1/2} - L_\pm {f} - {g_\pm}\Big)(-1,\bar{k},-v_1)\\
			&= v_1\widehat{\partial_{x_1}f}(-1,\bar{k},v_1),
		\end{align}
			and similarly, 
			\begin{align}\label{115a}
			-v_1\widehat{\partial_{x_1}f}(1,\bar{k},-v_1) &= v_1\widehat{\partial_{x_1}f}(1,\bar{k},v_1).
		\end{align}
		
		For the case $\partial=\partial_{x_1}$, by \eqref{4.120a} with boundary condition $\widehat{\phi_c}(\pm 1,\bar{k})=0$, we know that $\widehat{\partial_{\bar{x}}\phi_c}(\pm 1,\bar{k})=0$. Thus by the definition of $\widehat{\Phi_c}$, we have 
		\begin{align*}
			\widehat{\Phi_c}(- 1,\bar{k},-v_1,\bar{v}) = -\widehat{\Phi_c}(- 1,\bar{k},v_1,\bar{v}),\quad \widehat{\Phi_c}(1,\bar{k},-v_1,\bar{v}) = -\widehat{\Phi_c}(1,\bar{k},v_1,\bar{v}).
		\end{align*}
		Therefore, when $\partial=\partial_{x_1}$, by change of variable $v_1\mapsto -v_1$, \eqref{115} and \eqref{115a}, we have 
		\begin{align}\label{117}
			S_6 &= -\int^T_0(v_1\widehat{\partial_{x_1} f_\pm}(1,v_1),\widehat{\Phi_c}(1,v_1))_{L^2_v}\,dt \notag\\&\qquad\qquad\qquad\qquad\qquad\qquad+ \int^T_0(v_1\widehat{\partial_{x_1} f_\pm}(-1,v_1),\widehat{\Phi_c}(-1,v_1))_{L^2_v}\,dt\\
			&=\notag -\int^T_0(-v_1\widehat{\partial_{x_1}f_\pm}(1,-v_1),\widehat{\Phi_c}(1,-v_1))_{L^2_v}\,dt\notag\\&\notag\qquad\qquad\qquad\qquad\qquad\qquad+ \int^T_0(-v_1\widehat{\partial_{x_1}f_\pm}(-1,-v_1),\widehat{\Phi_c}(-1,-v_1))_{L^2_v}\,dt\\
			&=\label{118} \int^T_0(v_1\widehat{\partial_{x_1}f_\pm}(1,v_1),\widehat{\Phi_c}(1,v_1))_{L^2_v}\,dt- \int^T_0(v_1\widehat{\partial_{x_1}f_\pm}(-1,v_1),\widehat{\Phi_c}(-1,v_1))_{L^2_v}\,dt
			= 0. 
		\end{align}
		Here $S_6$ equal to zero because \eqref{117} and \eqref{118} are the same except the sign. 
		
		For the case $\partial=I,\partial_{x_2},\partial_{x_3}$, by boundary condition $\partial_{x_1}\widehat{\phi_c}(\pm 1,\bar{k})=0$, we know that
		\begin{align*}
			\widehat{\Phi_c}(- 1,\bar{k},-v_1,\bar{v}) = \widehat{\Phi_c}(- 1,\bar{k},v_1,\bar{v}),\quad \widehat{\Phi_c}(1,\bar{k},-v_1,\bar{v}) = \widehat{\Phi_c}(1,\bar{k},v_1,\bar{v}).
		\end{align*}
		On the other hand, from \eqref{103a} we have 
		\begin{align}\label{115b}
			v_1\widehat{\partial_{}f}(-1,\bar{k},-v_1) = v_1\widehat{\partial_{}f}(-1,\bar{k},v_1), \quad v_1\widehat{\partial_{}f}(1,\bar{k},-v_1) = v_1\widehat{\partial_{}f}(1,\bar{k},v_1).
		\end{align}
		Therefore, when $\partial=I,\partial_{x_2},\partial_{x_3}$, by change of variable $v_1\mapsto -v_1$, we have 
		\begin{align}\label{116}
			S_6 &=\notag -\int^T_0(v_1\widehat{\partial_{}f_\pm}(1,v_1),\widehat{\Phi_c}(1,v_1))_{L^2_v}\,dt+ \int^T_0(v_1\widehat{\partial_{}f_\pm}(-1,v_1),\widehat{\Phi_c}(-1,v_1))_{L^2_v}\,dt\\
			&=\notag \int^T_0(v_1\widehat{\partial_{}f_\pm}(1,v_1),\widehat{\Phi_c}(1,v_1))_{L^2_v}\,dt- \int^T_0(v_1\widehat{\partial_{}f_\pm}(-1,v_1),\widehat{\Phi_c}(-1,v_1))_{L^2_v}\,dt\\
			&= 0. 
		\end{align}
		
		Combining the above estimates for $S_j$'s $(1\le j\le 6)$, applying \eqref{c0} for the case of $|\alpha|=0$ and $\bar{k}=0$, taking summation of \eqref{100a} for $|\alpha|\le 1$ for the remaining cases and then taking the square root and summation over $\bar{k}\in\Z^2$, and finally letting $\eta$ suitably small, we obtain
		\begin{multline}\label{122af}
			\sum_{|\alpha|\le 1}\|\widehat{\partial^\alpha c}\|^2_{L^1_{\bar{k}}L^2_TL^2_{x_1}}
			\lesssim \sum_{|\alpha|\le 1}\Big(\|\widehat{\partial^\alpha f}(T)\|_{L^1_{\bar{k}}L^2_{x_1,v}}+\|\widehat{\partial^\alpha f_0}\|_{L^1_{\bar{k}}L^2_{x_1,v}}\Big) \\
			\qquad+C_\eta\sum_{|\alpha|\le 1}\Big(\|\{\I-\P\}\widehat{\pa^\alpha f}\|_{L^1_{\bar{k}}L^2_TL^2_{x_1}L^2_D} + \|\widehat{\partial^\alpha E}\|_{L^1_{\bar{k}}L^2_TL^2_{x_1}}
			+\|(\widehat{\partial^\alpha g},\zeta)_{L^2_v}\|_{L^1_{\bar{k}}L^2_TL^2_{x_1}}\Big)\\
			+ \eta^{1/2}\|\widehat{\nabla_x b}\|_{L^1_{\bar{k}}L^2_TL^2_{x_1}}+\|\wh{E}\|_{L^1_{\bar{k}}L^\infty_TL^2_{x_1}}\|\wh{E}\|_{L^1_{\bar{k}}L^2_TL^2_{x_1}}.
		\end{multline}

		%
		%
		%

		\medskip \noindent{\bf Step 2. Estimate of $\widehat{b}(t,x_1,\bar{k})$.}
		Now we consider the estimate of $\widehat{b}$. For this purpose we choose 
		\begin{align*}
			\widehat{\Phi}=\widehat{\Phi_b}=\sum^3_{m=1}\widehat{\Phi^{j,m}_b},\ j=1,2,3,
		\end{align*}
		where 
		\begin{equation*}
			\widehat{\Phi^{j,m}_b}=\left\{\begin{aligned}
				\big(|v|^2v_mv_j\widehat{\partial_{x_m}\phi_j}-\frac{7}{2}(v_m^2-1)\widehat{\partial_{x_j}\phi_j}\big)\mu^{1/2},\ j\neq m,\\
				\frac{7}{2}(v_j^2-1)\widehat{\partial_{x_j}\phi_j}\mu^{1/2},\qquad\qquad\qquad j=m.
			\end{aligned}\right.
		\end{equation*}
		Also, $\phi_j$ $(1\le j\le 3)$ solves  
		\begin{equation}\left\{\label{118a}
			\begin{aligned}
				&-\partial^2_{x_1}\widehat{\phi_j}+|\bar{k}|^2\widehat{\phi_j}(\bar{k})=\widehat{\partial b_j}(\bar{k}),\\&\partial_{x_1}\widehat{\phi_1}(\pm 1,\bar{k})=\widehat{\phi_2}(\pm 1,\bar{k})=\widehat{\phi_3}(\pm 1,\bar{k})=0,
			\end{aligned}\right.
		\end{equation}for the case $\partial=\partial_{x_1}$ and
		\begin{equation}\label{118b}\left\{\begin{aligned}
				&-\partial^2_{x_1}\widehat{\phi_j}+|\bar{k}|^2\widehat{\phi_j}(\bar{k})=\widehat{\partial b_j}(\bar{k}),\\&\widehat{\phi_1}(\pm 1,\bar{k})=\partial_{x_1}\widehat{\phi_2}(\pm 1,\bar{k})=\partial_{x_1}\widehat{\phi_3}(\pm 1,\bar{k})=0,
			\end{aligned}\right.
		\end{equation}
		for the case $\partial=I,\partial_{x_2},\partial_{x_3}$.
		When $\bar{k}=0$, $j=1$ and $\pa=\pa_{x_1}$, \eqref{118a} is Neumann boundary problem. When $j=2,3$ and $\pa=I$, \eqref{118b} is also Neumann boundary problem. Their existence are guaranteed by the fact that $\int_{-1}^1\int_{{\T}^2}\pa_{x_1}b_1\,d\bar{x}dx_1=b_1(1)-b_1(-1)=0$ and $\int_{-1}^1\int_{{\T}^2}b_j\,d\bar{x}dx_1=0$ for $j=2,3$, which follows from \eqref{reflection} and \eqref{conservation_finite} respectively. 
		Under this choice we have 
		\begin{align*}
			&\quad\,-\sum^3_{m=1}\int^T_0(\PP \widehat{\partial f},iv\cdot k\widehat{\Phi^{j,m}_b})_{L^2_{x_1,v}}dt\\
			&=-\sum^{3}_{m=1,m\neq j}\int^T_0(v_mv_j\mu^{1/2}\widehat{\partial b_j},|v|^2v_mv_j\mu^{1/2}\widehat{\partial_{x_m}^2\phi_j})_{L^2_{x_1,v}}dt\\
			&\qquad-\sum^{3}_{m=1,m\neq j}\int^T_0(v_mv_j\mu^{1/2}\widehat{\partial b_m},|v|^2v_mv_j\mu^{1/2}\widehat{\partial_{x_m}\partial_{x_j}\phi_j})_{L^2_{x_1,v}}dt\\
			&\qquad+7\sum^{3}_{m=1,m\neq j}\int^T_0(\widehat{\partial b_m},\widehat{\partial_{x_m}\partial_{x_j}\phi_j})_{L^2_{x_1}}dt-7\int^T_0(\widehat{\partial b_m},\widehat{\partial^2_{x_j}\phi_j})_{L^2_{x_1}}dt\\
			&= -7 \sum^3_{m=1}\int^T_0(\widehat{\partial b_j},\widehat{\partial_{x_m}^2\phi_j})_{L^2_{x_1}}dt=7\int^T_0\|\widehat{\partial b_j}\|^2_{L^2_{x_1}}dt.
		\end{align*}
		Note that from \eqref{25}, we have $\int_{\R^3}v_m^2(v_m^2-1)\mu\,dv=2$, $\int_{\R^3}v_m^2(v_j^2-1)\mu\,dv=0$, $\int_{\R^3}v_m^2v_j^2|v|^2\mu\,dv=7$ when $m\neq j$. Similar to the calculation for deriving \eqref{99cc}, \eqref{4.11}, \eqref{4.11a}, \eqref{4.12} and \eqref{4.12a}, we have that for $|\bar{k}|\ge 0$, 
		\begin{equation*}
			\|\partial_t\widehat{\pa_{x_1}\phi_{j}}\|_{L^2_{x_1}}+|\bar{k}|\|\partial_t\widehat{\phi_{j}}\|_{L^2_{x_1}}\lesssim \|\partial_t\widehat{b_j}\|_{L^2_{x_1}},
		\end{equation*} 
		\begin{equation*}
			\|\widehat{\pa_{x_1}\phi_{j}}\|_{L^2_{x_1}}+|\bar{k}|\|\widehat{\phi_{j}}\|_{L^2_{x_1}}\lesssim \|\widehat{b_j}\|_{L^2_{x_1}},
		\end{equation*}
		and
		\begin{align*}
			\|\pa^2_{x_1}\widehat{\phi_{j}}\|_{L^2_{x_1}}+|\bar{k}|\|\pa_{x_1}\widehat{\phi_{j}}\|_{L^2_{x_1}}+|\bar{k}|^2\|\widehat{\phi_{j}}\|_{L^2_{x_1}}\lesssim \|\widehat{\partial b_j}\|_{L^2_{x_1}}.
		\end{align*}
		As a consequence, 
		\begin{align*}
			|S_2|&\le \int^T_0|(\{\I-\P\}\widehat{\partial f},\partial_t\widehat{\Phi^{j,m}_b})_{L^2_{x_1,v}}\,dt + \int^T_0|(\P\widehat{\partial f},\partial_t\widehat{\Phi^{j,m}_b})_{L^2_{x_1,v}}\,dt\\
			&\le \eta\int^T_0\|\partial_t\widehat{\na_x\phi_j}\|^2_{L^2_{x_1}}\,dt
			+C_\eta\int^T_0\|\{\I-\P\}\widehat{\partial f}\|^2_{L^2_{x_1}L^2_D}\,dt + C_\eta\int^T_0\|\widehat{\pa c}\|^2_{L^2_{x_1}}\,dt\\
			&\le \eta\int^T_0\|({\nabla_x (a_++a_-)})^\wedge\|^2_{L^2_{x_1}}\,dt
			+C_\eta\sum_{|\alpha|\le1}\int^T_0\|\{\I-\P\}\widehat{\pa^\alpha f}\|^2_{L^2_{x_1}L^2_D}\,dt\\
			&\qquad\qquad\qquad\qquad\qquad\qquad\qquad\qquad\qquad\qquad\qquad + C_\eta\int^T_0\|\widehat{\pa c}\|^2_{L^2_{x_1}}\,dt,
		\end{align*}
		where we used the second equation of \eqref{19} in the last inequality. 
		By the same argument as in the case of $\widehat{c}(t,x_1,\bar{k})$, we have 
		\begin{multline*}
			|S_1|+|S_3|+|S_4|+|S_5|\lesssim \sum_{|\alpha|\le 1}\Big(\|\widehat{\partial^\alpha f}(T)\|^2_{L^2_{x_1,v}}+\|\widehat{\partial^\alpha f_0}\|^2_{L^2_{x_1,v}}\Big)+ \eta\sum_{|\alpha|\le1}\int^T_0\|\widehat{\partial^\alpha b}\|^2_{L^2_{x_1}}\,dt\\
			+C_\eta \int^T_0\|\widehat{\partial E}\|^2_{L^2_{x_1}}\,dt+C_\eta\int^T_0\|\{\I-\P\}\widehat{\nabla_x f}\|^2_{L^2_{x_1}L^2_D}\,dt+C_\eta\int^T_0\|(\widehat{\partial g},\zeta)_{L^2_v}\|^2_{L^2_{x_1}}\,dt.
		\end{multline*}
		
		Now we consider the boundary term $S_6$. For the case $\partial=\partial_{x_1}$, by boundary condition \eqref{118a}, we have $\widehat{\partial_{x_j}\phi_j}(\pm 1)=0$ for $j=1,2,3$ and $\widehat{\partial_{x_m}\phi_j}(\pm 1)=0$ for $m,j=2,3$. Thus 
		\begin{align*}
			\widehat{\Phi_b}(-1,\bar{k},-v_1,\bar{v}) = -\widehat{\Phi_b}(-1,\bar{k},v_1,\bar{v}),\quad\widehat{\Phi_b}(1,\bar{k},-v_1,\bar{v}) = -\widehat{\Phi_b}(1,\bar{k},v_1,\bar{v}).
		\end{align*}
		Together with \eqref{115} and \eqref{115a}, by changing of variable $v_1\mapsto -v_1$, we know that 
		\begin{align*}
			S_6 &=\notag -\int^T_0(v_1\widehat{\partial_{x_1}f_\pm}(1,v_1),\widehat{\Phi_b}(1,v_1))_{L^2_v}\,dt+ \int^T_0(v_1\widehat{\partial_{x_1}f_\pm}(-1,v_1),\widehat{\Phi_b}(-1,v_1))_{L^2_v}\,dt\\
			&=\notag \int^T_0(v_1\widehat{\partial_{x_1}f_\pm}(1,v_1),\widehat{\Phi_b}(1,v_1))_{L^2_v}\,dt- \int^T_0(v_1\widehat{\partial_{x_1}f_\pm}(-1,v_1),\widehat{\Phi_b}(-1,v_1))_{L^2_v}\,dt= 0. 
		\end{align*}
		For the case $\partial=I,\partial_{x_2},\partial_{x_3}$, by \eqref{118b}, we know that $\partial_{x_1}\phi_j(\pm1)=\partial_{j}\phi_1(\pm1)=0$ for $j=2,3$. Thus 
		\begin{align*}
			\widehat{\Phi_b}(-1,\bar{k},-v_1,\bar{v}) = \widehat{\Phi_b}(-1,\bar{k},v_1,\bar{v}),\quad\widehat{\Phi_b}(1,\bar{k},-v_1,\bar{v}) = \widehat{\Phi_b}(1,\bar{k},v_1,\bar{v}),
		\end{align*}
		and hence by \eqref{115b} and change of variable $v_1\mapsto -v_1$, 
		\begin{align*}
			S_6 &=\notag -\int^T_0(v_1\widehat{\partial f_\pm}(1,v_1),\widehat{\Phi_b}(1,v_1))_{L^2_v}\,dt+ \int^T_0(v_1\widehat{\partial f_\pm}(-1,v_1),\widehat{\Phi_b}(-1,v_1))_{L^2_v}\,dt\\
			&=\notag \int^T_0(v_1\widehat{\partial f_\pm}(1,v_1),\widehat{\Phi_b}(1,v_1))_{L^2_v}\,dt- \int^T_0(v_1\widehat{\partial f_\pm}(-1,v_1),\widehat{\Phi_b}(-1,v_1))_{L^2_v}\,dt= 0. 
		\end{align*}
		Combining the above estimates, taking summation of \eqref{100a} over $|\alpha|\le1$, and then taking the square root and summation over $\bar{k}\in\Z^2$, and finally letting $\eta$ sufficiently small, we have 
		\begin{multline}\label{122bf}
			\sum_{|\alpha|\le1}\|\widehat{\partial^\alpha b}\|_{L^1_{\bar{k}}L^2_TL^2_{x_1}}\lesssim \sum_{|\alpha|\le1}\Big(\|\widehat{\partial^\alpha f}(T)\|_{L^1_{\bar{k}}L^2_{x_1,v}}+\|\widehat{\partial^\alpha f_0}\|_{L^1_{\bar{k}}L^2_{x_1,v}}\Big)\\+\eta^{1/2}\|({\nabla_x (a_++a_-)})^\wedge\|_{L^1_{\bar{k}}L^2_TL^2_{x_1}}
			+C_\eta\sum_{|\alpha|\le1}\Big(\|\{\I-\P\}\widehat{\pa^\alpha f}\|_{L^1_{\bar{k}}L^2_TL^2_{x_1}L^2_D} \\+ \|\widehat{\pa^\alpha c}\|_{L^1_{\bar{k}}L^2_TL^2_{x_1}} +\|(\widehat{\partial^\alpha g},\zeta)_{L^2_v}\|_{L^1_{\bar{k}}L^2_TL^2_{x_1}}+  \|\widehat{\partial^\alpha E}\|_{L^1_{\bar{k}}L^2_TL^2_{x_1}}\Big).
		\end{multline}

		
		\medskip\noindent{\bf Step 3. Estimate on $\widehat{a_+}(t,x_1,\bar{k})-\widehat{a_-}(t,x_1,\bar{k})$ and $\widehat{E}(t,x_1,\bar{k})$.} We choose the following two test functions
		\begin{align*}
			\widehat{\Phi} = \widehat{\tilde{\Phi}_{a\pm}} = (|v|^2-10)\big(v\cdot\widehat{\nabla_{x_1,\bar{x}}\phi_{a\pm}}(t,x_1,\bar{k})\big)\mu^{1/2},
		\end{align*}
		where $\phi_a = (\phi_{a+}(x_1,\bar{k}),\phi_{a_-}(x_1,\bar{k}))$ solves 
		\begin{equation}\begin{aligned}\label{102h}
				-\partial^2_{x_1}\widehat{\phi_{a+}}+|\bar{k}|^2\widehat{\phi_{a+}} = \widehat{\partial a_+}-\widehat{\partial a_-},\\	-\partial^2_{x_1}\widehat{\phi_{a-}}+|\bar{k}|^2\widehat{\phi_{a-}} = \widehat{\partial a_-}-\widehat{\partial a_+},
			\end{aligned}
		\end{equation}
		with boundary condition $\widehat{\phi_{a}}(\pm 1,\bar{k})=0$
		for the case $\partial=\partial_{x_1}$ and, 
		$\partial_{x_1}\widehat{\phi_{a}}(\pm 1,\bar{k})=0$
		for the case $\partial=I,\partial_{x_2},\partial_{x_3}$. When $|\bar{k}|=0$ and $\pa=I$, \eqref{102h} is a pure Neumann boundary problem and we need $\int^1_{-1}\int_{\T^2}a_+-a_-\,d\bar{x}dx_1$ to ensure the existence for \eqref{102h}, which follows from \eqref{conservation_finite}. In particular, as in the case of $\widehat{c}$, we add zero mean condition $\int_{-1}^1\widehat{\phi_a}(x_1,0)\,dx_1=0$ for Neumann boundary case. Thus, similar to the calculation from \eqref{99cc} to \eqref{4.12a}, we have 
		\begin{equation*}
			\|\pa^2_{x_1}\widehat{\phi_{a}}\|_{L^2_{x_1}}+|\bar{k}|\|\pa_{x_1}\widehat{\phi_{a}}\|_{L^2_{x_1}}+|\bar{k}|^2\|\widehat{\phi_{a}}\|_{L^2_{x_1}}\lesssim \|\widehat{\partial a_+}-\widehat{\partial a_-}\|_{L^2_{x_1}},
		\end{equation*}
		\begin{equation*}
			\|\pa_{x_1}\wh{\phi_a}\|_{L^2_{x_1}}+|\bar{k}|\|\wh{\phi_a}\|_{L^2_{x_1}}\lesssim \|\wh{a_+}-\wh{a_-}\|_{L^2_{x_1}}, 
		\end{equation*}and
		\begin{equation}\label{4.18a}
			\|\pa_t\pa_{x_1}\wh{\phi_a}\|_{L^2_{x_1}}+|\bar{k}|\|\pa_t\wh{\phi_a}\|_{L^2_{x_1}}\lesssim \|\pa_t(\wh{a_+}-\wh{a_-})\|_{L^2_{x_1}}.
		\end{equation}	
		Now we compute \eqref{100a} with summation on $\pm$. For the left hand side, taking summation on $\pm$, we have 
		\begin{align*}
			&\quad\,-\sum_{\pm}\int^T_0(\widehat{\partial \PP f},v\cdot\widehat{\nabla_{x_1,\bar{x}}\Phi_{a\pm}})_{L^2_{x_1,v}} \,dt\\
			&= -\sum_{\pm}\sum_{j,m=1}^3\int^T_0(\widehat{\partial a_\pm}+\widehat{\partial b}\cdot v+(|v|^2-3)\widehat{\partial c} ,v_jv_m(|v|^2-10)\mu{(\partial_{x_j}\partial_{x_m}\phi_{a\pm})^\wedge})_{L^2_{x_1,v}} \,dt\\
			&= \sum_{\pm}\sum_{j=1}^3\int^T_0(\widehat{\partial a_\pm} ,\widehat{-\partial^2_j\phi_{a\pm}})_{L^2_{x_1,v}} \,dt = \int^T_0\|\widehat{\partial a_+}-\widehat{\partial a_-}\|^2_{L^2_{x_1,v}} \,dt.
		\end{align*}
		%
		For $S_2$, we decompose $f_\pm$ into $\PP f$ and $(\II-\PP)f$ and apply \eqref{4.18a} to obtain 
		\begin{align*}
			|\sum_\pm S_2|&\lesssim \int^T_0|(\{\I-\P\}\widehat{\partial f},\partial_t\widehat{\Phi_a})_{L^2_{x_1,v}}|\,dt + \int^T_0|(\P \widehat{\partial f},\partial_t\widehat{\Phi_a})_{L^2_{x_1,v}}|\,dt\\
			&\lesssim \int^T_0\|\partial_t\widehat{\nabla_{x}\phi_a}\|^2_{L^2_{x_1}}\,dt+\int^T_0\|\{\I-\P\}\widehat{\partial f}\|^2_{L^2_{x_1}L^2_D}+\int^T_0\|\widehat{\partial b}\|^2_{L^2_{x_1}}\,dt\\
			&\lesssim \sum_{|\alpha|\le 1}\int^T_0\|\{\I-\P\}\widehat{\partial^\alpha  f}\|^2_{L^2_{x_1}L^2_D}+\int^T_0\|\widehat{\partial b}\|^2_{L^2_{x_1}}\,dt,
		\end{align*}
		where we used the first equation of \eqref{98a}
		For $S_1$, $S_3$ and $S_4$, we apply the same argument as in Step 1 to obtain 
		\begin{multline*}
			|S_1|+|S_3|+|S_4|\lesssim \sum_{|\alpha|\le 1}\Big(\|\widehat{\partial^\alpha f}(T)\|_{L^2_{x_1,v}}+\|\widehat{\partial^\alpha f_0}\|_{L^2_{x_1,v}}\Big)+ \eta\sum_{|\alpha|\le1}\int^T_0\|\widehat{\partial^\alpha a_+}-\widehat{\partial^\alpha a_-}\|^2_{L^2_{x_1}}\,dt\\
			\qquad+C_\eta\sum_{|\alpha|\le1}\int^T_0\|\{\I-\P\}\widehat{\pa^\alpha f}\|^2_{L^2_{x_1}L^2_D}\,dt+C_\eta\int^T_0\|(\widehat{\partial g},\zeta)_{L^2_v}\|^2_{L^2_{x_1}}\,dt.
		\end{multline*}
		For $S_5$, notice from \eqref{102h} that $\phi_{a+} = -\phi_{a-}$.
		Thus, by direction calculations, 
		\begin{align*}
			S_5 &= \sum_\pm\mp\int^T_0((\partial{\nabla_x\phi})^\wedge\cdot v\mu^{1/2},\widehat{\Phi_{a\pm}})_{L^2_{x_1,v}}\,dt\\
			&= \sum_{\pm,j,m} \mp \int^T_0((\partial{\partial_{x_j}\phi})^\wedge \mu^{1/2},(|v|^2-10)v_jv_m\widehat{\partial_{x_m}\phi_{a\pm}}(t,x_1,\bar{k})\mu^{1/2})_{L^2_{x_1,v}}\,dt\\
			&= 5\sum_{\pm,j} \mp \int^T_0((\partial{\partial_{x_j}\phi})^\wedge ,\widehat{\partial_{x_j}\phi_{a\pm}}(t,x_1,\bar{k}))_{L^2_{x_1}}\,dt\\
			&= -10\sum_{j=1}^3  \int^T_0((\partial{\partial_{x_j}\phi})^\wedge ,\widehat{\partial_{x_j}\phi_{a+}}(t,x_1,\bar{k}))_{L^2_{x_1}}\,dt.
		\end{align*}
		When $\partial=\partial_{x_1}$, by equation \eqref{40abcd}, boundary condition $\widehat{\phi_{a}}(\pm 1,\bar{k})=0$ and Neumann boundary condition \eqref{97a}, we deduce from integration by parts that  
		\begin{align*}
			\quad\,\sum_{j=1}^3((\partial_{x_1}{\partial_{x_j}\phi})^\wedge ,\widehat{\partial_{x_j}\phi_{a+}})_{L^2_{x_1}}
			&= 
			(\widehat{\partial_{x_1}\phi} ,-\widehat{\Delta_x\phi_{a+}})_{L^2_{x_1}}
			=
			(\widehat{\partial_{x_1}\phi},-\partial_{x_1}\widehat{\Delta_x\phi})_{L^2_{x_1}}\\
			&=\|\widehat{\partial_{x_1}\nabla_x\phi}\|^2_{L^2_{x_1}}.
		\end{align*}
		Similarly, when $\partial=I,\partial_{x_2},\partial_{x_3}$, note that $\partial_{x_1}\widehat{\phi_{a+}}(\pm 1)=0$ and $\widehat{\partial\partial_{x_1}\phi}(\pm1)=0$.
		Then
		\begin{align*}
			\sum_{j=1}^3((\partial{\partial_{x_j}\phi})^\wedge ,\widehat{\partial_{x_j}\phi_{a+}})_{L^2_{x_1}}
			&= (\widehat{\partial\phi},\partial(\widehat{a_+}-\widehat{a_-}))_{L^2_{x_1}}= (\widehat{\partial\phi},-\partial\widehat{\Delta_x\phi})_{L^2_{x_1}}\\
			&=\|\widehat{\partial\nabla_x\phi}\|^2_{L^2_{x_1}}.
		\end{align*} Consequently, 
		\begin{align*}
			S_5 = -10\int^T_0\|\widehat{\partial\nabla_x\phi}\|^2_{L^2_{x_1}}\,dt.
		\end{align*}
		Applying the same arguments in \eqref{118} and \eqref{116}, one can obtain $S_6=0$. Therefore, combining the above estimates, taking summation of \eqref{100a} over $|\alpha|\le1$, and then taking the square root and summation over $\bar{k}\in\Z^2$, and finally letting $\eta>0$ suitably small, we have 
		\begin{multline}\label{122c}
			\sum_{|\alpha|\le1}\Big(\|\widehat{\partial^\alpha a_+}-\widehat{\partial^\alpha a_-}\|_{L^1_{\bar{k}}L^2_TL^2_{x_1}}  + \|\widehat{\partial^\alpha\nabla_x\phi}\|_{L^1_{\bar{k}}L^2_TL^2_{x_1}}
			\Big)
			\\\lesssim \sum_{|\alpha|\le1}\Big(\|\widehat{\partial^\alpha f}(T)\|_{L^1_{\bar{k}}L^2_TL^2_{x_1,v}}
			+\|\widehat{\partial^\alpha f_0}\|_{L^1_{\bar{k}}L^2_TL^2_{x_1,v}}+\|\widehat{\partial^\alpha b}\|_{L^1_{\bar{k}}L^2_TL^2_{x_1}}
			\\+\|\{\I-\P\}\widehat{\partial^\alpha  f}\|_{L^1_{\bar{k}}L^2_TL^2_{x_1}L^2_D}
			+\|(\widehat{\partial^\alpha g},\zeta)_{L^2_v}\|_{L^1_{\bar{k}}L^2_TL^2_{x_1}}\Big).
		\end{multline}
		
		\medskip \noindent{\bf Step 4. Estimate on $\widehat{a_+}(t,x_1,\bar{k})+\widehat{a_-}(t,x_1,\bar{k})$.}
		Similar to the above case, we choose the following two test functions
		\begin{align*}
			\widehat{\Phi} = \widehat{\Phi_{a\pm}} = (|v|^2-10)\big(v\cdot\widehat{\nabla_{x_1,\bar{x}}\phi_{a\pm}}(t,x_1,\bar{k})\big)\mu^{1/2},
		\end{align*}
		where $\phi_a = (\phi_{a+}(x_1,\bar{k}),\phi_{a_-}(x_1,\bar{k}))$ solves 
		\begin{equation}\label{102b}\begin{aligned}
				-\partial^2_{x_1}\widehat{\phi_{a+}}+|\bar{k}|^2\widehat{\phi_{a+}} = \widehat{\partial a_+}+\widehat{\partial a_-},\\	-\partial^2_{x_1}\widehat{\phi_{a-}}+|\bar{k}|^2\widehat{\phi_{a-}} = \widehat{\partial a_+}+\widehat{\partial a_-},
			\end{aligned}
		\end{equation}
		with boundary condition $\widehat{\phi_{a}}(\pm 1,\bar{k})=0$
		for the case $\partial=\partial_{x_1}$, and 
		$\partial_{x_1}\widehat{\phi_{a}}(\pm 1,\bar{k})=0$
		for the case $\partial=I,\partial_{x_2},\partial_{x_3}$. When $|\bar{k}|=0$ and $\pa=I$, \eqref{102b} is a pure Neumann boundary problem and we need $\int^1_{-1}\int_{\T^2}a_+-a_-\,d\bar{x}dx_1$ to ensure its existence and we assume $\phi_{a\pm}$ has zero mean in this case. 
		
		For the left hand side of \eqref{100a} , taking summation on $\pm$, we have 
		\begin{align*}
			&\quad\,-\sum_{\pm}\int^T_0(\widehat{\partial \PP f},v\cdot\widehat{\nabla_{x_1,\bar{x}}\Phi_{a\pm}})_{L^2_{x_1,v}} \,dt\\
			&= -\sum_{\pm}\sum_{j,m=1}^3\int^T_0(\widehat{\partial a_\pm}+\widehat{\partial b}\cdot v+(|v|^2-3)\widehat{\partial c} ,v_jv_m(|v|^2-10)\mu{(\partial_{x_j}\partial_{x_m}\phi_{a\pm})^\wedge})_{L^2_{x_1,v}} \,dt\\
			&= \sum_{\pm}\sum_{j=1}^3\int^T_0(\widehat{\partial a_\pm} ,\widehat{-\partial^2_j\phi_{a\pm}})_{L^2_{x_1}} \,dt = \int^T_0\|\widehat{\partial a_+}+\widehat{\partial a_-}\|^2_{L^2_{x_1}} \,dt.
		\end{align*}
		Following the same argument as in the Step 3 and using the first equation of \eqref{19}, we have 
		\begin{align*}
			|\sum_\pm S_2|\lesssim \sum_{|\alpha|\le 1}\int^T_0\|\{\I-\P\}\widehat{\partial^\alpha  f}\|^2_{L^2_{x_1}L^2_D}+\sum_{|\alpha|\le 1}\int^T_0\|\widehat{\partial^\alpha b}\|^2_{L^2_{x_1}}\,dt,
		\end{align*}
		and 
		\begin{multline*}
			|S_1|+|S_3|+|S_4|+|S_5|\lesssim \|\widehat{\partial f}(T)\|^2_{L^2_{x_1,v}}+\|\widehat{\partial f_0}\|^2_{L^2_{x_1,v}}+ \eta\int^T_0\|\widehat{\partial a_+}+\widehat{\partial a_-}\|^2_{L^2_{x_1}}\,dt\\+C_\eta \int^T_0\|\widehat{\partial E}\|^2_{L^2_{x_1}}\,dt+C_\eta\int^T_0\|\{\I-\P\}\widehat{\nabla_x f}\|^2_{L^2_{x_1}L^2_D}\,dt+C_\eta\int^T_0\|(\widehat{\partial g},\zeta)_{L^2_v}\|^2_{L^2_{x_1}}\,dt.
		\end{multline*}
		Also, similar to Step 3, $S_6$ vanishes by using the boundary condition for $\phi_{a}$. Thus, 
		\begin{multline}\label{122df}
			\sum_{|\alpha|\le1}\|\widehat{\partial^\alpha a_+}+\widehat{\partial^\alpha a_-}\|_{L^1_{\bar{k}}L^2_TL^2_{x_1}} 
			\lesssim  \sum_{|\alpha|\le 1}\Big(\|\widehat{\partial^\alpha f}(T)\|_{L^1_{\bar{k}}L^2_{x_1,v}}+\|\widehat{\partial^\alpha f_0}\|_{L^1_{\bar{k}}L^2_{x_1,v}}\\+ \|\widehat{\partial^\alpha E}\|_{L^1_{\bar{k}}L^2_TL^2_{x_1}}+\|\widehat{\partial^\alpha b}\|_{L^1_{\bar{k}}L^2_TL^2_{x_1}}+\|\{\I-\P\}\widehat{ \partial^\alpha f}\|_{L^1_{\bar{k}}L^2_TL^2_{x_1}L^2_D}\\+\|(\widehat{\partial^\alpha g},\zeta)_{L^2_v}\|_{L^1_{\bar{k}}L^2_TL^2_{x_1}}\Big).
		\end{multline}
		
		\medskip\noindent{\bf Step 5. Energy estimate.}
		Now we take the linear combination $\eqref{122c}+\kappa\times\eqref{122af}+\kappa^2\times\eqref{122bf}+\kappa^3\times\eqref{122df}$ and let $\kappa,\eta$ sufficiently small, then 
		\begin{multline}\label{121}
			\sum_{|\alpha|\le 1}\Big(\|\partial^\alpha (\widehat{a_+},\widehat{a_-},\widehat{b},\widehat{c})\|_{L^1_{\bar{k}}L^2_TL^2_{x_1}}+\int^T_0\|\widehat{\partial^\alpha \nabla_x\phi}\|_{L^1_{\bar{k}}L^2_TL^2_{x_1}}\Big)\\
			\lesssim\sum_{|\alpha|\le 1}\Big( \|\widehat{\partial^\alpha f}(T)\|_{L^1_{\bar{k}}L^2_{x_1}L^2_v}+\|\widehat{\partial^\alpha  f_0}\|_{L^1_{\bar{k}}L^2_{x_1}L^2_v}+\|(\widehat{\partial^\alpha g},\zeta)_{L^2_v}\|_{L^1_{\bar{k}}L^2_TL^2_{x_1}}\\+\|\{\I-\P\}\widehat{\partial^\alpha  f}\|_{L^1_{\bar{k}}L^2_TL^2_{x_1}L^2_D}\Big)
			+\|\wh{E}\|_{L^1_{\bar{k}}L^\infty_TL^2_{x_1}}\|\wh{E}\|_{L^1_{\bar{k}}L^2_TL^2_{x_1}}.
		\end{multline}
		Note that $|\widehat{\partial a_+}+\widehat{\partial a_-}|^2+|\widehat{\partial a_+}-\widehat{\partial a_-}|^2 = 2|\widehat{\partial a_+}|^2+2|\widehat{\partial a_+}|^2$. 
		
		\medskip
		Next we estimate $g_\pm$. 
		Using a similar argument as \eqref{84b}, we have 
		\begin{align*}
			&\quad\,\int_{\Z^2}\Big(\int^T_0\big\|\big((\partial(\nabla_x\phi\cdot \nabla_vf_\pm))^\wedge,\zeta(v)\big)_{L^2_{v}}\big\|^2_{L^2_{x_1}} dt\Big)^{1/2}d\Sigma(\bar{k})\\
			&\lesssim\notag \int_{\Z^2}\Big(\int^T_0\Big(\int_{\Z^2}\big(\|\widehat{\partial E}(\bar{k}-\bar{l})\|_{L^2_{x_1}}\|\widehat{f}(\bar{l})\|_{L^2_{x_1}L^2_{D}}\\
			&\qquad\qquad\qquad\qquad\qquad\qquad+\|\widehat{ E}(\bar{k}-\bar{l})\|_{L^2_{x_1}}\|\widehat{\partial f}(\bar{l})\|_{L^2_{x_1}L^2_{D}}\big)\,d\Sigma(\bar{l})\Big)^2dt\Big)^{1/2}d\Sigma(\bar{k})\\
			&\lesssim \sum_{|\alpha|\le 1}\|\widehat{\partial^\alpha  E}\|_{L^1_{\bar{k}}L^\infty_TL^2_{x_1}}\sum_{|\alpha|\le 1}\|\widehat{\partial^\alpha f}\|_{L^1_{\bar{k}}L^2_TL^2_{x_1}L^2_{D}}.
		\end{align*}
		Similar to \eqref{2.23} and \eqref{2.24}, we have 
		\begin{multline*}
			\int_{\Z^2}\Big(\int^T_0\big|\big(\frac{1}{2}((\partial (\nabla_x\phi\cdot vf_\pm))^\wedge,\zeta(v)\big)_{L^2_{x_1,v}}\big|^2 dt\Big)^{1/2}d\Sigma(\bar{k})
			\\\lesssim \sum_{|\alpha|\le 1}\|\widehat{\partial^\alpha E}\|_{L^1_{\bar{k}}L^\infty_TL^2_{x_1}}\sum_{|\alpha|\le 1}\|\widehat{\partial^\alpha f}\|_{L^1_{\bar{k}}L^2_TL^2_{x_1}L^2_{D}},
		\end{multline*}
		and 
		\begin{multline*}
			\int_{\Z^2}\Big(\int^T_0\big|\big((\partial \Gamma_\pm(f,f))^\wedge,\zeta(v)\big)_{L^2_{x_1,v}}\big|^2dt\Big)^{1/2}d\Sigma({\bar{k}})
			\\\lesssim \sum_{|\alpha|\le 1}\|\widehat{\partial^\alpha f}\|_{L^1_{\bar{k}}L^\infty_TL^2_{x_1}L^2_v}\sum_{|\alpha|\le 1}\|\widehat{\partial^\alpha f}\|_{L^1_{\bar{k}}L^2_TL^2_{x_1}L^2_{D}}.
		\end{multline*}
		Plugging the above three estimates into \eqref{121}, we have 
		\begin{multline*}\notag
			\quad\,\sum_{|\alpha|\le 1} \|\partial^\alpha(\widehat{a_+},\widehat{a_-},\widehat{b},\widehat{c})\|_{L^1_{\bar{k}}L^2_TL^2_{x_1}}+\sum_{|\alpha|\le 1}\|\widehat{\partial^\alpha E}\|_{L^1_{\bar{k}}L^2_TL^2_{x_1}}\\
			\notag\lesssim \sum_{|\alpha|\le 1}\big(\|\widehat{\partial^\alpha f}(T)\|_{L^1_{\bar{k}}L^2_{x_1,v}}+\|\widehat{\partial^\alpha f_0}\|_{L^1_{\bar{k}}L^2_{x_1,v}}\big)+\sum_{|\alpha|\le1}\|\{\I-\P\}\widehat{\partial^\alpha f}\|_{L^1_{\bar{k}}L^2_TL^2_{x_1}L^2_D}\\
			\notag\qquad+\sum_{|\alpha|\le 1}\big(\|\widehat{\partial^\alpha  E}\|_{L^1_{\bar{k}}L^\infty_TL^2_{x_1}}+\|\widehat{\partial^\alpha f}\|_{L^1_{\bar{k}}L^\infty_TL^2_{x_1}L^2_v}\big)\sum_{|\alpha|\le 1}\|\widehat{\partial^\alpha f}\|_{L^1_{\bar{k}}L^2_TL^2_{x_1}L^2_{D}}\\
			+\|\wh{E}\|_{L^1_{\bar{k}}L^\infty_TL^2_{x_1}}\|\wh{E}\|_{L^1_{\bar{k}}L^2_TL^2_{x_1}}.
		\end{multline*}This completes the proof of Theorem \ref{Thm41f}. 
		\qe\end{proof}

	\section{Proof of the main result in torus}\label{SecTorus}
	
	In this section, we shall obtain the global existence and large time behavior for system \eqref{1} in torus. 
	Assume $\gamma\ge -2$ for Landau case and $\gamma+2s\ge 1$, $1/2\le s<1$ for Boltzmann case. 
	

Recall that for VPL case, we let $\vt = -\gamma$ when $-2\le \gamma<-1$ and $q = 0$ when $\gamma\ge -1$. For VPB case, we let $q=0$ when $\gamma+2s\ge 1$, $\frac{1}{2}\le s<1$. Then $\vt\in[1,2]$ and we define weight $w$ as in \eqref{w2}, 
	where $q\ge 0$ and we restrict $0<q<1$ when $\vt=2$. 
	To obtain the microscopic estimate, we take Fourier transform of \eqref{1} on $x$ to obtain  
	\begin{align}\label{30a}
		\partial_t\widehat{f_\pm}+iv\cdot k\widehat{f_\pm}\pm\frac{1}{2}(\nabla_x\phi\cdot vf_\pm)^\wedge \mp (\nabla_x\phi\cdot\nabla_vf_\pm)^\wedge \pm \widehat{\nabla_x\phi}\cdot v\mu^{1/2} - L_\pm \widehat{f} = \widehat{\Gamma_\pm(f,f)}.
	\end{align}
	Denote $h=e^{\delta t}f$. Multiplying \eqref{30a} with $e^{\delta t}$ and $w$, we have 
	\begin{multline}\label{30b}
		\partial_t(w\widehat{h_\pm})+\frac{qN\<v\>^\vt}{(1+t)^{N+1}}w\widehat{h_\pm}+iv\cdot kw\widehat{h_\pm}\pm\frac{1}{2}(\nabla_x\phi\cdot vwh_\pm)^\wedge\mp (\nabla_x\phi\cdot w\nabla_vh_\pm)^\wedge \\ \pm e^{\delta t}\widehat{\nabla_x\phi}\cdot vw\mu^{1/2} - wL_\pm \widehat{h} = e^{-\delta t}w\widehat{\Gamma_\pm(h,h)}+\delta wh_\pm.
	\end{multline}
	Taking the inner product of \eqref{30b} with $w\widehat{h_\pm}$ over $\R^3_v$ and taking the real part, we have
	\begin{align}\label{31a}\notag
		&\quad\,\frac{1}{2}\partial_t|w\widehat{h_\pm}|^2_{L^2_v}+\frac{qN}{(1+t)^{N+1}}|w\<v\>^{\frac{\vt}{2}}\widehat{h_\pm}|^2_{L^2_v}\pm\frac{1}{2}\Re\big((\nabla_x\phi\cdot vwh_\pm)^\wedge,w\widehat{h_\pm}\big)_{L^2_v}\\
		&\mp \notag\Re\big((\nabla_x\phi\cdot w\nabla_vh_\pm)^\wedge,w\widehat{h_\pm}\big)_{L^2_v} \pm \Re\big(e^{\delta t}\widehat{\nabla_x\phi}\cdot vw\mu^{1/2},w\widehat{h_\pm}\big)_{L^2_v} - \Re\big(w^2L_\pm \widehat{h},\widehat{h_\pm}\big)_{L^2_v}\\
		& = \Re\big(e^{-\delta t}w^2\widehat{\Gamma_\pm(h,h)},\widehat{h_\pm}\big)_{L^2_v}+\delta |wh_\pm|^2_{L^2_v}.
	\end{align} 
	We will take summation on \eqref{31a} over $\pm$, integration over $t\in[0,T]$, the square root and integration over $k\in\Z^3$. So, we write the following estimates to control the trouble terms.

	For the third term on the left hand side of \eqref{31a}, recalling $E=-\nabla_x\phi$, we have 
	\begin{align*}\notag
		&\quad\,\Big|\int^T_0\frac{1}{2}\big((\nabla_x\phi\cdot vh_\pm)^\wedge,w^2\widehat{h_\pm}\big)_{L^2_v}\,dt\Big|\\
		&\notag\lesssim \int^T_0\int_{\Z^3}|e^{\delta t}\widehat{E}(k-l)||e^{-\frac{\delta t}{2}}w\<v\>^{\frac{1}{2}}\widehat{h}(l)|_{L^2_v}\,d\Sigma(l)|e^{-\frac{\delta t}{2}}w\<v\>^{\frac{1}{2}}\widehat{h}(k)|_{L^2_v}\,dt\\
		&\lesssim \int^T_0C_\eta\Big(\int_{\Z^3}|e^{\delta t}\widehat{E}(k-l)||e^{-\frac{\delta t}{2}}w\<v\>^{\frac{1}{2}}\widehat{h}(l)|_{L^2_v}\,d\Sigma(l)\Big)^2+\eta^2|e^{-\frac{\delta t}{2}}w\<v\>^{\frac{1}{2}}\widehat{h}(k)|^2_{L^2_v}\,dt.
	\end{align*}
	Taking the square root and summation over $k\in\Z^3$, we have 
	\begin{align}\notag\label{32a}
		&\quad\,\int_{\Z^3}\Big|\int^T_0\frac{1}{2}\big((\nabla_x\phi\cdot vh_\pm)^\wedge,w^2\widehat{h_\pm}\big)_{L^2_v}\,dt\Big|^{\frac{1}{2}}d\Sigma(k)\\
		&\notag\lesssim C_\eta\int_{\Z^3}\int_{\Z^3}\Big(\int^T_0|e^{\delta t}\widehat{E}(k-l)|^2|e^{-\frac{\delta t}{2}}w\<v\>^{\frac{1}{2}}\widehat{h}(l)|_{L^2_v}^2\,dt\Big)^{\frac{1}{2}}\,d\Sigma(l)d\Sigma(k)\\
		&\notag\qquad\qquad\qquad\qquad\qquad\qquad\qquad\qquad\qquad\qquad+\eta\|e^{-\frac{\delta t}{2}}w\<v\>^{\frac{1}{2}}\widehat{h}(k)\|_{L^1_kL^2_TL^2_v}\\
		&\notag\lesssim C_\eta\int_{\Z^3}\int_{\Z^3}\sup_{0\le t\le T}|e^{\delta t}\widehat{E}(k-l)|\Big(\int^T_0|e^{-\frac{\delta t}{2}}w\<v\>^{\frac{1}{2}}\widehat{h}(l)|_{L^2_v}^2\,dt\Big)^{\frac{1}{2}}\,d\Sigma(l)d\Sigma(k)\\
		&\notag\qquad\qquad\qquad\qquad\qquad\qquad\qquad\qquad\qquad\qquad+\eta\|e^{-\frac{\delta t}{2}}w\<v\>^{\frac{1}{2}}\widehat{h}(k)\|_{L^1_kL^2_TL^2_v}\\
		&\lesssim C_\eta\|e^{\delta t}\widehat{E}\|_{L^1_kL^\infty_T}\|e^{-\frac{\delta t}{2}}w\<v\>^{\frac{1}{2}}\widehat{h}\|_{L^1_kL^2_TL^2_v}+\eta\|e^{-\frac{\delta t}{2}}w\<v\>^{\frac{1}{2}}\widehat{h}\|_{L^1_kL^2_TL^2_v},
	\end{align}where we used Young's inequality for integration and Fubini's Theorem. 
	For Landau case, the forth term on the left hand of \eqref{31a} with integration on $t\in[0,T]$ can be controlled by 
	\begin{align*}
		&\quad\,\Big|\int^T_0\big((\nabla_x\phi\cdot \nabla_vh_\pm)^\wedge,w^2\widehat{h_\pm}\big)_{L^2_v}\,dt\Big|\\
		&\lesssim \int^T_0\int_{\Z^3}|e^{\delta t}\widehat{E}(k-l)||w\<v\>^{\gamma/2}\nabla_v\widehat{h}(l)|_{L^2_v}\,d\Sigma(l)|e^{-\delta t}w\<v\>^{-\gamma/2}\widehat{h}(k)|_{L^2_v}\,dt\\
		&\lesssim \int^T_0C_\eta\Big(\int_{\Z^3}|e^{\delta t}\widehat{E}(k-l)||e^{-\frac{\delta t}{2}}w\<v\>^{\gamma/2}\nabla_v\widehat{h}(l)|_{L^2_v}\,d\Sigma(l)\Big)^2+\eta^2|e^{-\frac{\delta t}{2}}w\<v\>^{-\gamma/2}\widehat{f}(k)|^2_{L^2_v}\,dt.
	\end{align*}
	Taking the square root and integration over $k\in\Z^3$, similar to \eqref{32a}, we have 
	\begin{align}\label{32b}\notag
		&\quad\,\int_{\Z^3}\Big|\int^T_0\frac{1}{2}\big((\nabla_x\phi\cdot \nabla_vh_\pm)^\wedge,w^2\widehat{h_\pm}\big)_{L^2_v}\,dt\Big|^{1/2}\,d\Sigma(k)\\
		&\lesssim C_\eta\|e^{\delta t}\widehat{E}\|_{L^1_kL^\infty_T}\|e^{-\frac{\delta t}{2}}\widehat{h}\|_{L^1_kL^2_TL^2_{D,w}}+\eta\|e^{-\frac{\delta t}{2}}w\<v\>^{-\gamma/2}\widehat{h}\|_{L^1_kL^2_TL^2_v}.
	\end{align}
	For Boltzmann case, we deal with the forth term on the left hand of \eqref{31a} by interpolation. Indeed, since $q=0$ for Boltzmann case, we have 
	\begin{align}\label{116a}\notag
		&\quad\,\Big|\int^T_0\big((\nabla_x\phi\cdot \nabla_vh_\pm)^\wedge,\widehat{h_\pm}\big)_{L^2_v}\,dt\Big|\\
		&\notag= \Big|\int^T_0\int_{\Z^3}\big(\widehat{E}(k-l) \nabla_v\widehat{h}(l),\widehat{h}(k)\big)_{L^2_v}\,d\Sigma(l)dt\Big|\\
		&\lesssim \int^T_0\int_{\Z^3}|e^{\delta t}\widehat{E}(k-l)||e^{-\frac{\delta t}{2}}\<v\>^{\frac{\gamma}{2}}\<D_v\>^s\widehat{h}(l)|_{L^2_v}|e^{-\frac{\delta t}{2}}\<v\>^{-\frac{\gamma}{2}}\<D_v\>^{1-s}\widehat{h}(k)|_{L^2_v}\,d\Sigma(l)dt.
	\end{align}
	Here we shall deal with the term $|e^{-\frac{\delta t}{2}}\<v\>^{-\frac{\gamma}{2}}\<D_v\>^{1-s}\widehat{h}(k)|_{L^2_v}$ in \eqref{116a}. By Young's inequality, 
	\begin{align*}
		\<v\>^{-\frac{\gamma}{2}}\<\eta\>^{1-s}&\lesssim \big(\<v\>^{\frac{\gamma(1-s)}{2s}}\<\eta\>^{1-s}\big)^{\frac{s}{1-s}}+\big(\<v\>^{-\frac{\gamma}{2s}}\big)^{\frac{s}{2s-1}}\\
		&\lesssim \<v\>^{\frac{\gamma}{2}}\<\eta\>^s+\<v\>^{-\frac{\gamma}{2(2s-1)}}, 
	\end{align*}where $\eta$ is the Fourier variable of $v$. 
	Similar calculation can be applied on derivatives of $\<v\>^{-\frac{\gamma}{2}}\<\eta\>^{1-s}$. Thus, 
	$\<v\>^{-\frac{\gamma}{2}}\<\eta\>^{1-s}$ belongs to symbol class $S(\<v\>^{\frac{\gamma}{2}}\<\eta\>^s+\<v\>^{-\frac{\gamma}{2(2s-1)}})$.
	Applying \cite[Lemma 2.4 and Corollary 2.5]{Deng2020a}, we have 
	\begin{align*}
		|\<v\>^{-\frac{\gamma}{2}}\<D_v\>^{1-s}(\widehat{h})(k)|_{L^2_v}\lesssim |\<v\>^{\frac{\gamma}{2}}\<D_v\>^s\widehat{h}(k)|_{L^2_v}+|\<v\>^{-\frac{\gamma}{2(2s-1)}}\widehat{h}(k)|_{L^2_v}.
	\end{align*}
	Recall that $\frac{1}{2} \ge -\frac{\gamma}{2(2s-1)}$ for Boltzmann case in our setting. Thus, substituting the above estimate into \eqref{116a}, taking square root and summation over $k\in\Z^3$ of the resultant estimate, we have 
	\begin{align}\label{217}
		&\notag\quad\,\int_{\Z^3}\Big|\int^T_0\big((\nabla_x\phi\cdot \nabla_vh_\pm)^\wedge,\widehat{h_\pm}\big)_{L^2_v}\,dt\Big|^{1/2}\,d\Sigma(k)\\
		&\notag\lesssim \int_{\Z^3}\Big(\int^T_0\Big(\int_{\Z^3}|e^{\delta t}\widehat{E}(k-l)||e^{-\frac{\delta t}{2}}\<v\>^{\frac{\gamma}{2}}\<D_v\>^s\widehat{h}(l)|_{L^2_v}\,d\Sigma(l)\Big)^2\\
		&\notag\qquad\qquad\qquad\qquad+\eta^2|e^{-\frac{\delta t}{2}}\<v\>^{\frac{\gamma}{2}}\<D_v\>^s\widehat{h}(k)|^2_{L^2_v}+\eta^2|e^{-\frac{\delta t}{2}}\<v\>^{\frac{1}{2}}\widehat{h}(k)|_{L^2_v}\,dt \\
		&\lesssim \|e^{\delta t}\widehat{E}\|_{L^1_kL^\infty_T}\|e^{-\frac{\delta t}{2}}h\|_{L^1_kL^2_TL^2_{D}}
		+\eta\big(\|e^{-\frac{\delta t}{2}}\widehat{h}\|_{L^1_kL^2_TL^2_{D}}+\|e^{-\frac{\delta t}{2}}\<v\>^{\frac{1}{2}}\widehat{h}\|_{L^1_kL^2_TL^2_v}\big). 
	\end{align}
	For the fifth term on the left hand of \eqref{31a} when $q=0$, we take the summation over $\pm$ to obtain 
	\begin{multline}\label{33a}
		\sum_\pm\pm\big(\widehat{\nabla_x\phi}\cdot v\mu^{1/2},\widehat{f_\pm}\big)_{L^2_v}= \widehat{\nabla_x\phi}\cdot\overline{\widehat{G}}
		= \widehat{\phi}\,\overline{-ik\cdot\widehat{G}}
		\\
		=\widehat{\phi}\,\overline{\partial_t(\widehat{a_+}-\widehat{a_-})}
		=\widehat{\phi}\,\overline{ik\cdot\partial_t\widehat{E}}
		=\widehat{\nabla_x\phi}\cdot\overline{\partial_t\widehat{\nabla_x\phi}},
	\end{multline}where the first inequality follows from the definition \eqref{98a} of $G$, the third and fourth inequalities follow from \eqref{27}. Taking the real part of \eqref{33a} and multiplying the weight $e^{2\delta t}$, we have 
	\begin{align}\label{32c}
		\sum_\pm\pm\Re\big(e^{\delta t}\widehat{\nabla_x\phi}\cdot v\mu^{1/2},\widehat{h_\pm}\big)_{L^2_v}&=\frac{1}{2}\partial_t|e^{\delta t}\widehat{E}|^2 - \delta|e^{\delta t}\widehat{E}|^2. 
	\end{align}
	For the fifth left-hand term of \eqref{31a} when $q>0$, we write an upper bound:
	\begin{align}\label{32cc}
		\big|\Re\big(e^{\delta t}\widehat{\nabla_x\phi}\cdot v\mu^{1/2},w^2\widehat{h_\pm}\big)_{L^2_v}\big|\lesssim |e^{\delta t}\widehat{E}|^2+|\<v\>^{\frac{\gamma+2s}{2}}\widehat{h_\pm}|^2_{L^2_{v}}.
	\end{align}
	For the sixth term on the left hand of \eqref{31a}, when $q=0$, we take the summation over $\pm$ to obtain 
	\begin{align}\label{32d}
		\sum_\pm (L_\pm \widehat{h},\widehat{h_\pm})_{L^2_v}\ge \lambda|\{\I-\P\}\widehat{h}|_D^2.
	\end{align}
	When $q>0$, we use \eqref{36c} to deduce that  
	\begin{align}\label{32dd}
		\sum_\pm (w^2L_\pm \widehat{h},\widehat{h_\pm})_{L^2_v}\ge \lambda|\widehat{h}|_{L^2_{D,w}}^2-C|\widehat{h}|^2_{L^2_{B_C}},
	\end{align}for some $\lambda,C>0$. 
	For the first term on the right hand of \eqref{31a}, taking integration over $t\in[0,T]$, by \eqref{35a}, we have 
	\begin{align*}
		&\quad\,\Big|\int^T_0\big(e^{-\delta t}w^2\widehat{\Gamma_\pm(h,h)},\widehat{h_\pm}\big)_{L^2_v}dt\Big|\\
		&= \int^T_0\int_{\Z^3}\big(w^2\Gamma_\pm(\widehat{h}(k-l),\widehat{h}(l)),\widehat{h_\pm}(k)\big)_{L^2_v}d\Sigma(l)dt\\
		&\lesssim \int^T_0\int_{\Z^3}|w\widehat{h}(k-l)|_{L^2_v}|\widehat{h}(l)|_{L^2_{D,w}}|\widehat{h}(k)|_{L^2_{D,w}}d\Sigma(l)dt\\
		&\lesssim \int^T_0C_\eta\Big(\int_{\Z^3}|w\widehat{h}(k-l)|_{L^2_v}|\widehat{h}(l)|_{L^2_{D,w}}d\Sigma(l)\Big)^2+\eta^2|\widehat{h}|^2_{L^2_{D,w}}dt.
	\end{align*}
	Taking the square root and integration over $k\in\Z^3$, using the trick of \eqref{32a}, we have 
	\begin{multline}\label{45a}
		\quad\,\int_{\Z^3}\Big|\int^T_0\big(e^{-\delta t}w^2\widehat{\Gamma_\pm(h,h)},\widehat{h_\pm}\big)_{L^2_v}dt\Big|^{1/2}\,dk
		\\\lesssim C_\eta\|w\widehat{h}\|_{L^1_kL^\infty_TL^2_v}\|\widehat{h}\|_{L^1_kL^2_TL^2_{D,w}}+\eta\|\widehat{h}\|^2_{L^1_kL^2_TL^2_{D,w}}.
	\end{multline}
	Now we take the summation on \eqref{31a} over $\pm$, integration over $t\in[0,T]$, the square root and then integration over $k\in\Z^3$. If $q=0$ in \eqref{w2}, then combining estimates \eqref{32a}, \eqref{32b}, \eqref{217}, \eqref{32c}, \eqref{32d} and \eqref{45a}, we have 
	\begin{align}\label{46a}\notag
		&\quad\,\|\widehat{h}\|_{L^1_kL^\infty_TL^2_v}+\|e^{\delta t}\widehat{E}\|_{L^1_kL^\infty_T}+\|\{\I-\P\}\widehat{h}\|_{L^1_kL^2_TL^2_{D}}\\
		&\lesssim\|\widehat{f_0}\|_{L^1_kL^2_v}+\|\widehat{E_0}\|_{L^1_k}+\notag
		\delta^{1/2}\big(\|e^{\delta t}\widehat{E}\|_{L^1_kL^2_T}+\|\widehat{h}\|_{L^1_kL^2_TL^2_v}\big)+C_\eta\|\widehat{h}\|_{L^1_kL^\infty_TL^2_v}\|\widehat{h}\|_{L^1_kL^2_TL^2_{D}}\\
		&\notag\quad+C_\eta\|e^{\delta t}\widehat{E}\|_{L^1_kL^\infty_T}\big(\|\<v\>^{\gamma/2}\nabla_v\widehat{h}\|_{L^1_kL^2_TL^2_v}+\|e^{-\frac{\delta t}{2}}\<v\>^{1/2}\widehat{h}\|_{L^1_kL^2_TL^2_v}\big)\\
		&\quad+\eta\big(\|e^{-\frac{\delta t}{2}}\<v\>^{-\gamma/2}\widehat{h}\|_{L^1_kL^2_TL^2_v}
		+\|e^{-\frac{\delta t}{2}}\<v\>^{1/2}\widehat{h}\|_{L^1_kL^2_TL^2_v}\big)+\eta\|\widehat{h}\|_{L^1_kL^2_TL^2_{D}}.
	\end{align}
	If $q\neq 0$, then combining estimates \eqref{32a}, \eqref{32b}, \eqref{217}, \eqref{32cc}, \eqref{32dd} and \eqref{45a}, we have 
	\begin{align}\label{46b}\notag
		&\quad\,\|w\widehat{h}\|_{L^1_kL^\infty_TL^2_v}+\sqrt{qN}\|{\<v\>^{\frac{\vt}{2}}}{(1+t)^{-\frac{N+1}{2}}}w\widehat{h}\|_{L^1_kL^2_TL^2_v}
		+\|\widehat{h}\|_{L^1_kL^2_TL^2_{D,w}}\\
		&\lesssim\|w\widehat{f_0}\|_{L^1_kL^2_v}+\|\widehat{E_0}\|_{L^1_k}+\notag
		\delta^{1/2}\big(\|e^{\delta t}\widehat{E}\|_{L^1_kL^2_T}+\|w\widehat{h}\|_{L^1_kL^2_TL^2_v}\big)\\
		&\notag\quad+C_\eta\|e^{\delta t}\widehat{E}\|_{L^1_kL^\infty_T}\big(\|\widehat{h}\|_{L^1_kL^2_TL^2_{D,w}}+\|e^{-\frac{\delta t}{2}}w\<v\>^{1/2}\widehat{h}\|_{L^1_kL^2_TL^2_v}\big)\\
		&\notag\quad+\eta\big(\|e^{-\frac{\delta t}{2}}w\<v\>^{-\gamma/2}\widehat{f}\|_{L^1_kL^2_TL^2_v}
		+\|e^{-\frac{\delta t}{2}}w\<v\>^{1/2}\widehat{h}\|_{L^1_kL^2_TL^2_v}\big) + \|\widehat{h}\|_{L^1_kL^2_TL^2_D}\\
		&\quad+C_\eta\|w\widehat{h}\|_{L^1_kL^\infty_TL^2_v}\|\widehat{h}\|_{L^1_kL^2_TL^2_{D,w}}+\eta\|\widehat{h}\|_{L^1_kL^2_TL^2_{D,w}}+\|e^{\delta t}\widehat{E}\|_{L^1_kL^2_T} .
	\end{align}
	Noticing $-\gamma\le \vt$, we take combination $\eqref{46a}+\delta^{1/2}\times\eqref{46b}$ and let $\eta\ll\delta<1$ sufficiently small to obtain 
	\begin{align}\label{51aa}\notag
		&\quad\,\|\widehat{h}\|_{L^1_kL^\infty_TL^2_v}+\|e^{\delta t}\widehat{E}\|_{L^1_kL^\infty_T}+\|\{\I-\P\}\wh h\|_{L^1_kL^2_TL^2_{D}}+\delta^{1/2}\|w\widehat{h}\|_{L^1_kL^\infty_TL^2_v}\\
		&\quad\notag+\sqrt{\delta qN}\|{\<v\>^{\frac{\vt}{2}}}{(1+t)^{-\frac{N+1}{2}}}w\widehat{h}\|_{L^1_kL^2_TL^2_v}+\delta^{1/2}\|\widehat{h}\|_{L^1_kL^2_TL^2_{D,w}}\\
		&\lesssim\|w\widehat{f_0}\|_{L^1_kL^2_v}+\|\widehat{E_0}\|_{L^1_k}\notag+
		\delta^{1/2}\big(\|e^{\delta t}\widehat{E}\|_{L^1_kL^2_T}+\|w\widehat{h}\|_{L^1_kL^2_TL^2_v}\big)\\
		&\notag\quad+C_\eta\|e^{\delta t}\widehat{E}\|_{L^1_kL^\infty_T}\big(\|\widehat{h}\|_{L^1_kL^2_TL^2_{D,w}}+\|e^{-\frac{\delta t}{2}}w\<v\>^{1/2}\widehat{h}\|_{L^1_kL^2_TL^2_v}\big)\\
		&\quad+C_\eta\|w\widehat{h}\|_{L^1_kL^\infty_TL^2_v}\|\widehat{h}\|_{L^1_kL^2_TL^2_{D,w}}+\eta\|\widehat{h}\|_{L^1_kL^2_TL^2_{D}}.
	\end{align}
	The terms with velocity weight such as $\|e^{-\frac{\delta t}{2}}w\<v\>^{1/2}\widehat{h}\|_{L^1_kL^2_TL^2_v}$ are controlled by extra dissipation term $\|{\<v\>^{\frac{\vt}{2}}}{(1+t)^{-\frac{N+1}{2}}}w\widehat{h}\|_{L^1_kL^2_TL^2_v}$.

	\medskip
	
	Now we are in a position to prove Theorem \ref{Main}. 
	\begin{proof}[Proof of Theorem \ref{Main}]
		Recall that $h=e^{\delta t}f$. 
		Adding the weight $e^{\delta t}$ in \eqref{40}, we have 
		\begin{align*}
			\partial_t\widehat{h_\pm}+iv\cdot k\widehat{h_\pm} \pm e^{\delta t}\widehat{\nabla_x\phi}\cdot v\mu^{1/2} - L_\pm \widehat{h} = e^{\delta t}\widehat{g_\pm} + \delta \widehat{h_\pm}.
		\end{align*}
		Note that, compared to \eqref{40}, the only extra term is $\delta \widehat{h_\pm}$. 
		Then following the same argument in Theorem \ref{Thm21}, we deduce the macroscopic estimate: 
		\begin{multline}\label{53a}
			\|e^{\delta t}(\widehat{a_+},\widehat{a_-},\widehat{b},\widehat{c})\|_{L^1_kL^2_T}+\|e^{\delta t}\widehat{E}\|_{L^1_kL^2_T}\lesssim \|\widehat{h}\|_{L^1_kL^\infty_TL^2_v}+\|\widehat{f_0}\|_{L^1_kL^2_v}\\+\|\{\I-\P\}\widehat{h}\|_{L^1_kL^2_TL^2_{D}} 
			+\|e^{\delta t}\widehat{E}\|_{L^1_kL^\infty_T}\|e^{\delta t}\widehat{E}\|_{L^1_kL^2_T}	\\+\big(\|e^{\delta t}\widehat{E}\|_{L^1_kL^\infty_T}+\|\widehat{h}\|_{L^1_kL^\infty_TL^2_v}\big)\|\widehat{h}\|_{L^1_kL^2_TL^2_{D}}+\delta^{1/2}\|\widehat{h}\|_{L^1_kL^2_TL^2_v},
		\end{multline}
	where we have an extra term $\delta^{1/2}\|\widehat{h}\|_{L^1_kL^2_TL^2_v}$ compared to Theorem \ref{Thm21}. 
		Now we take the combination $\eqref{51aa}+\kappa\times\eqref{53a}$ with sufficiently small $\kappa,\delta,\eta>0$ to obtain 
		\begin{align}\label{52a}
			&\notag\quad\,\|\widehat{h}\|_{L^1_kL^\infty_TL^2_v}+\|e^{\delta t}\widehat{E}\|_{L^1_kL^\infty_T}+\|\wh h\|_{L^1_kL^2_TL^2_{D}}+\|e^{\delta t}\widehat{E}\|_{L^1_kL^2_T}\\
			&\quad\notag+\delta^{1/2}\|w\widehat{h}\|_{L^1_kL^\infty_TL^2_v}+\sqrt{\delta qN}\|{\<v\>^{\frac{\vt}{2}}}{(1+t)^{-\frac{N+1}{2}}}w\widehat{h}\|_{L^1_kL^2_TL^2_v}+\delta^{1/2}\|\widehat{h}\|_{L^1_kL^2_TL^2_{D,w}}\\
			&\lesssim\|w\widehat{f_0}\|_{L^1_kL^2_v}+\|\widehat{E_0}\|_{L^1_k}+\notag
			\delta^{1/2}\|w\widehat{h}\|_{L^1_kL^2_TL^2_v}+\|w\widehat{h}\|_{L^1_kL^\infty_TL^2_v}\|\widehat{h}\|_{L^1_kL^2_TL^2_{D,w}}\\
			&\quad+\|e^{\delta t}\widehat{E}\|_{L^1_kL^\infty_T}\big(\|\widehat{h}\|_{L^1_kL^2_TL^2_{D,w}}+\|e^{-\frac{\delta t}{2}}w\<v\>^{1/2}\widehat{h}\|_{L^1_kL^2_TL^2_v}+\|e^{\delta t}\widehat{E}\|_{L^1_kL^2_T}+\|\widehat{h}\|_{L^1_kL^2_TL^2_{D}}\big).
		\end{align}
		Next we discuss the result in the following two cases. 
		
		\medskip \noindent{\bf Case I: $\gamma\ge -1$ for VPL case and $\gamma+2s\ge 1$, $1/2\le s<1$ for VPB case.} 
		In this case, we have $|\<v\>^{-\frac{\gamma}{2}}(\cdot)|_{L^2_v}\lesssim |\<v\>^{\frac{1}{2}}(\cdot)|_{L^2_v}\lesssim |\cdot|_{L^2_{D}}$ and $\|\widehat{h}\|_{L^1_kL^2_TL^2_v}\lesssim \|\widehat{h}\|_{L^1_kL^2_TL^2_{D}}$. 
		Choosing $q=0$ in \eqref{w2} and $\delta$ in \eqref{52a} sufficiently small, we obtain 
		\begin{equation}\label{6.3}
			\E_T+\D_T
			\lesssim\|\widehat{f_0}\|_{L^1_kL^2_v}+\|\widehat{E_0}\|_{L^1_k}
			+\E_T\D_T,
		\end{equation}where $\E_T$ and $\D_T$ are defined by \eqref{defe} and \eqref{defd} respectively. 
		Under the smallness of $\|\widehat{f_0}\|_{L^1_kL^2_v}+\|\widehat{E_0}\|_{L^1_k}$, it's now standard to apply the continuity argument to obtain 
		\begin{align*}
			\E_T+\D_T
			\lesssim\|\widehat{f_0}\|_{L^1_kL^2_v}+\|\widehat{E_0}\|_{L^1_k}.
		\end{align*}
		This completes the case $\gamma\ge -1$ for VPL systems and $\gamma+2s\ge 1$, $\frac{1}{2}\le s<1$ for VPB systems.

		\medskip \noindent{\bf Case II: $-2\le\gamma<-1$ for VPL case.} We need to apply the estimate with time-velocity weight. 
		In this case, we have $\|w\widehat{h}\|_{L^1_kL^2_TL^2_v}\lesssim \|\widehat{h}\|_{L^1_kL^2_TL^2_{D,w}}$ and 
		\begin{equation*}
			\|e^{-\frac{\delta t}{2}}w\<v\>^{1/2}\widehat{h}\|_{L^1_kL^2_TL^2_v}\lesssim \sqrt{\delta qN}\|{\<v\>^{\frac{\vt}{2}}}{(1+t)^{-\frac{N+1}{2}}}w\widehat{h}\|_{L^1_kL^2_TL^2_v}.
		\end{equation*} Then letting $\delta>0$ small enough in \eqref{52a}, we obtain 
		\begin{align}\label{6.4}
			\E_T+\D_T+\E_{T,w}+\D_{T,w}
			&\lesssim\|w\widehat{f_0}\|_{L^1_kL^2_v}+\|\widehat{E_0}\|_{L^1_k}+(\E_{T,w}+\E_T)(\D_{T,w}+\D_T),
		\end{align}
		where $\E_{T,w}$ and $\D_{T,w}$ are defined by \eqref{ETw} and \eqref{DTw} respectively. Under the smallness of $\|w\widehat{f_0}\|_{L^1_kL^2_v}+\|\widehat{E_0}\|_{L^1_k}$, we obtain \eqref{energytorus} by using continuity argument and local existence from Section \ref{Sec_loc}. 
		\qe\end{proof}

	Next we prove the propagation of initial regularity. 
	\begin{proof}
		[Proof of Theorem \ref{spatial_regurlarity}]
		Following closely the proofs of from \eqref{32a} to \eqref{217} and Theorem \ref{Thm21}, one can show the analogous higher order trilinear estimates. That is, for $m\ge 0$, 
		\begin{multline*}
			\int_{\Z^3}\Big|\int^T_0\frac{1}{2}\big((\nabla_x\phi\cdot vwf_\pm)^\wedge,\<k\>^{2m}\widehat{wf_\pm}\big)_{L^2_v}\,dt\Big|^{1/2}d\Sigma(k)\\
			\qquad\lesssim{C_\eta}{} \|e^{\delta t}\widehat{\nabla_x\phi}\|_{L^1_{k,m}L^\infty_T}\|e^{-\frac{\delta t}{2}}\<v\>^{1/2}\widehat{wf}\|_{L^1_{k,m}L^2_TL^2_v}+{\eta}{}\|e^{-\frac{\delta t}{2}}\<v\>^{1/2}\widehat{wf}\|_{L^1_{k,m}L^2_TL^2_v},
		\end{multline*} 
	\begin{multline*}
			\int_{\Z^3}\Big|\int^T_0\big((\nabla_x\phi\cdot w\nabla_vf_\pm)^\wedge,\<k\>^{2m}\widehat{wf_\pm}\big)_{L^2_v}\,dt\Big|^{1/2}\,d\Sigma(k)\\
			\qquad\lesssim {C_\eta}\|e^{\delta t}\widehat{\nabla_x\phi}\|_{L^1_{k,m}L^\infty_T}\|e^{-\frac{\delta t}{2}}\<v\>^{\frac{\gamma}{2}}(w\nabla_v{f})^\wedge\|_{L^1_{k,m}L^2_TL^2_v}+\eta\|e^{-\frac{\delta t}{2}}\<v\>^{-\frac{\gamma}{2}}\widehat{wf}\|_{L^1_{k,m}L^2_TL^2_v},
		\end{multline*} 
	\begin{multline*}
			\int_{\Z^3}\Big|\int^T_0(\widehat{w\nabla_x\phi}\cdot v\mu^{1/2},\<k\>^{2m}\widehat{wf_\pm})_{L^2_v}dt\Big|^{1/2}d\Sigma(k)\\\le C_\eta\|\widehat{\nabla_x\phi}\|_{L^1_{k,m}L^2_T}+\eta\|\widehat{wf_\pm}\mu^{1/8}\|_{L^1_{k,m}L^2_TL^2_v},
		\end{multline*} 
		and 
		\begin{multline*}
			\quad\,\|e^{\delta t}(\widehat{a_+},\widehat{a_-},\widehat{b},\widehat{c})\|_{L^1_{k,m}L^2_T}+\|\widehat{E}\|_{L^1_{k,m}L^2_T}\lesssim \|\widehat{h}\|_{L^1_{k,m}L^\infty_TL^2_v}+\|\widehat{f_0}\|_{L^1_{k,m}L^2_v}\\+\|\{\I-\P\}\widehat{h}\|_{L^1_{k,m}L^2_TL^2_{D}} +\|e^{\delta t}\widehat{E}\|_{L^1_kL^\infty_T}\|e^{\delta t}\widehat{E}\|_{L^1_kL^2_T}\\
			+\big(\|e^{\delta t}\widehat{E}\|_{L^1_{k,m}L^\infty_T}+\|\widehat{h}\|_{L^1_{k,m}L^\infty_TL^2_v}\big)\|\widehat{h}\|_{L^1_{k,m}L^2_TL^2_{D}}+\delta^{1/2}\|\widehat{h}\|_{L^1_kL^2_TL^2_v}.
		\end{multline*}
		Taking the $L^2_v$ inner product of of \eqref{30b} with $\<k\>^{2m}\widehat{wh_\pm}$, we have 
		\begin{align*}\notag
			\frac{1}{2}\partial_t&|\<k\>^m\widehat{wh_\pm}|^2_{L^2_v}  +\frac{qN}{(1+t)^{N+1}}|w\<k\>^m\<v\>^{\frac{\vt}{2}}\widehat{h_\pm}|^2_{L^2_v}  - \Re(\<k\>^{2m}\widehat{wL_\pm h},\widehat{wh_\pm})_{L^2_v} \\&\le \mp \frac{1}{2}\Re(\<k\>^{2m}(\nabla_x\phi\cdot vwh_\pm)^\wedge,\widehat{wh_\pm})_{L^2_v}\pm \Re(\<k\>^{2m}(\nabla_x\phi\cdot w\nabla_vh_\pm)^\wedge,\widehat{wh_\pm})_{L^2_v}\\&\qquad \mp \Re(\<k\>^{2m}e^{\delta t}\widehat{w\nabla_x\phi}\cdot v\mu^{1/2},\widehat{wh_\pm})_{L^2_v} +\Re(\<k\>^{2m}e^{-\delta t}(w\Gamma_{\pm}(h,h))^\wedge,\widehat{wh_\pm})_{L^2_v}.
		\end{align*}
		Using the same argument deriving \eqref{6.4}, under the smallness of $\|\widehat{wf_0}\|_{L^1_{k,m}L^\infty_TL^2_v}+\|e^{\delta t}\widehat{E_0}\|_{L^1_{k,m}L^\infty_T}$, we have 
		\begin{align*}
			&\quad\,\|\widehat{wh}\|_{L^1_{k,m}L^\infty_TL^2_v}  +\sqrt{\delta qN}\|\<v\>^{\frac{\vt}{2}}(1+t)^{-\frac{N+1}{2}}\widehat{wh}\|_{L^1_{k,m}L^2_TL^2_v}  +\|\widehat{h}\|_{L^1_{k,m}L^2_TL^2_{D,w}} \\
			&\qquad\notag
			+\|\widehat{h}\|_{L^1_{k,m}L^\infty_TL^2_v}+\|e^{\delta t}\widehat{E}\|_{L^1_{k,m}L^\infty_T}+\|\wh h\|_{L^1_{k,m}L^2_TL^2_{D}}+\|e^{\delta t}\widehat{E}\|_{L^1_{k,m}L^2_T}\\
			&\lesssim \|\widehat{wf_0}\|_{L^1_{k,m}L^2_v}+\|\widehat{E_0}\|_{L^1_{k,m}},
		\end{align*}
		which implies \eqref{regularity} for the case $-1>\gamma\ge-2$. The case of $\gamma\ge-1$ follows closely to the argument that was used to derive \eqref{6.3} and we omit the proof for brevity. This completes the proof of Theorem \ref{spatial_regurlarity}.
\end{proof}

	\section{Proof of the main result in finite channel}\label{SecFinite}
	
	In this section, we write $\widehat{\cdot}=\F_{\bar{x}}$ to be the Fourier transform on $\bar{x}\in\T^2$. 
	Recall that we define $h=e^{\delta t}f$. Let $\partial=\partial^\alpha$ with $|\alpha|\le 1$. Acting $\partial$ on \eqref{1} and multiplying $we^{\delta t}$, we have 
	\begin{multline*}\notag\partial_tw\pa h_\pm +\frac{qN\<v\>^\vt}{(1+t)^{N+1}}w\partial h_\pm + v\cdot\nabla_xw(\partial h_\pm) \pm \frac{1}{2}w\partial(\nabla_x\phi\cdot vh_\pm)    
		\mp w\partial(\nabla_x\phi\cdot \nabla_vh_\pm)\\
		\pm e^{\delta t}\partial\nabla_x\phi\cdot vw\mu^{1/2} - wL_\pm \partial h = we^{-\delta t}\partial(\Gamma_{\pm}(h,h)) + \delta wh_\pm.
	\end{multline*}
	Taking Fourier transform $\widehat{\cdot}$ over $\T^2_x$ and the inner product with $w\widehat{h_\pm}$ over $[-1,1]\times\R_v^3$ and the real part, we have 
	\begin{align}\notag
		&\quad\,\frac{1}{2}\partial_t|\widehat{w\partial h_\pm}|^2_{L^2_{x_1,v}}  +\frac{1}{2}\int_{\R^3}\big(v_1|\widehat{w\partial h_\pm}(1)|^2-v_1|\widehat{w\partial h_\pm}(-1)|^2\,\big)dv\\&\notag\qquad+\frac{qN}{(1+t)^{N+1}}\Big|\<v\>^{\frac{\vt}{2}}w\widehat{\partial h_\pm}\Big|^2_{L^2_{x_1,v}} \\&\le \notag
		\Re(\widehat{wL_\pm \partial h},\widehat{w\partial h_\pm})_{L^2_{x_1,v}}\mp \frac{1}{2}\Re(w(\partial(\nabla_x\phi\cdot vh_\pm))^\wedge,\widehat{w\partial h_\pm})_{L^2_{x_1,v}}\\&\notag\qquad\pm \Re(w(\partial(\nabla_x\phi\cdot \nabla_vh_\pm))^\wedge,\widehat{w\partial h_\pm})_{L^2_{x_1,v}}\mp \Re(e^{\delta t}\widehat{w\partial\nabla_x\phi}\cdot v\mu^{1/2},\widehat{w\partial h_\pm})_{L^2_{x_1,v}} \\&\qquad +\Re(e^{-\delta t}w(\partial\Gamma_{\pm}(h,h))^\wedge,\widehat{w\partial h_\pm})_{L^2_{x_1,v}} + \delta |wh_\pm|^2_{L^2_{x_1,v}}.\label{62a}
	\end{align}

	Following carefully the argument from \eqref{32a} to \eqref{45a}, by replacing Fourier transform on torus $\T^3$ by Fourier transform on $\bar{x}\in\T^2$, we have the following estimates. 
	\begin{Lem} \label{Lem33a}
		Assume $\gamma\ge -2$ in Landau case and $\gamma+2s\ge 1$, $\frac{1}{2}\le s<1$ in Boltzmann casae. Let $\partial=\partial^\alpha$ with $|\alpha|\le 1$. For any $T>0$, $\eta>0$, we have 
		\begin{multline}\label{84a}
			\int_{\Z^2}\Big|\int^T_0\frac{1}{2}\big(w^2(\partial(\nabla_x\phi\cdot vh_\pm))^\wedge,\widehat{\partial h_\pm}\big)_{L^2_{x_1,v}}\,dt\Big|^{1/2}d\Sigma(\bar{k})\\
			\le{C_\eta}{} \sum_{|\alpha|\le 1} \|e^{\delta t}\widehat{\partial^\alpha \nabla_x\phi}\|_{L^1_{\bar{k}}L^\infty_TL^2_{x_1}}\sum_{|\alpha|\le 1}\|e^{-\frac{\delta t}{2}}\<v\>^{1/2}w\widehat{\partial^\alpha h}\|_{L^1_{\bar{k}}L^2_TL^2_{x_1,v}}\\
			+{\eta}{}\sum_{|\alpha|\le 1}\|e^{-\frac{\delta t}{2}}\<v\>^{1/2}w\widehat{\partial^\alpha h}\|_{L^1_{\bar{k}}L^2_TL^2_{x_1,v}},
		\end{multline}and
		\begin{multline*}
			\notag\int_{\Z^2}\Big|\int^T_0(e^{\delta t}w^2\widehat{\partial\nabla_x\phi}\cdot v\mu^{1/2},\widehat{\partial h_\pm})_{L^2_{x_1,v}}dt\Big|^{1/2}d\Sigma({\bar{k}})\\
			\qquad\qquad\qquad\qquad\le C_\eta\|e^{\delta t}\widehat{\partial\nabla_x\phi}\|_{L^1_{\bar{k}}L^2_T}+\eta\|w\widehat{\partial h_\pm}\mu^{1/8}\|_{L^1_{\bar{k}}L^2_TL^2_{x_1,v}}.
		\end{multline*}
		For the boundedness of $\Gamma_\pm$, we have 
		\begin{multline*}
			\int_{\Z^2}\Big(\int^T_0\big|(
			w^2(\partial\Gamma_{\pm}(f,g))^\wedge,\widehat{h_\pm})_{L^2_{x_1,v}}\big|dt\Big)^{1/2}d\Sigma({\bar{k}})\\ 
			\le C_\eta\sum_{|\alpha|\le 1}\|w\widehat{\partial^\alpha f}\|_{L^1_{\bar{k}}L^\infty_TL^2_{x_1,v}}\sum_{|\alpha|\le 1}\|\widehat{\partial^\alpha g}\|_{L^1_{\bar{k}}L^2_TL^2_{x_1}L^2_{D,w}}+\eta\sum_{|\alpha|\le 1}\|\widehat{\partial^\alpha h}\|^2_{L^1_{\bar{k}}L^2_TL^2_{x_1}L^2_{D,w}}.
		\end{multline*}
		Moreover, for the term $\na_x\phi\cdot\na_vh_\pm$, we have 
		\begin{multline*}
			\notag\int_{\Z^2}\Big|\int^T_0\big((\partial(\nabla_x\phi\cdot w\nabla_vh_\pm))^\wedge,w\widehat{\partial h_\pm}\big)_{L^2_{x_1,v}}\,dt\Big|^{1/2}\,d\Sigma({\bar{k}})\\
			\notag\qquad\qquad\le {C_\eta}\sum_{|\alpha|\le 1}\|e^{\delta t}\widehat{\partial^\alpha \nabla_x\phi}\|_{L^1_{\bar{k}}L^\infty_TL^2_{x_1}}\sum_{|\alpha|\le 1}\|e^{-\frac{\delta t}{2}}\<v\>^{\gamma/2}(w\nabla_v{\partial^\alpha h})^\wedge\|_{L^1_{\bar{k}}L^2_TL^2_{x_1,v}}\\
			\notag\qquad\qquad\qquad\qquad\qquad\qquad\qquad\qquad+\eta\sum_{|\alpha|\le 1}\|e^{-\frac{\delta t}{2}}\<v\>^{-\gamma/2}w\widehat{\partial^\alpha h}\|_{L^1_{\bar{k}}L^2_TL^2_{x_1,v}},
		\end{multline*}
		for Landau case, and 
		\begin{align*}
			&\notag\quad\,\int_{\Z^3}\Big|\int^T_0\big((\nabla_x\phi\cdot \nabla_vh_\pm)^\wedge,\widehat{h_\pm}\big)_{L^2_v}\,dt\Big|^{1/2}\,d\Sigma(k)\\
			&\lesssim \|e^{\delta t}\widehat{E}\|_{L^1_kL^\infty_T}\|e^{-\frac{\delta t}{2}}h\|_{L^1_kL^2_TL^2_{D}}
			+\eta\big(\|e^{-\frac{\delta t}{2}}\widehat{h}\|_{L^1_kL^2_TL^2_{D}}+\|e^{-\frac{\delta t}{2}}\<v\>^{\frac{1}{2}}\widehat{h}\|_{L^1_kL^2_TL^2_v}\big). 
		\end{align*}for Boltzmann case. 
	\end{Lem}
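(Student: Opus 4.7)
The plan is to repeat the torus computations \eqref{32a}--\eqref{45a} with two systematic modifications: take the Fourier transform only in the tangential variable $\bar x\in\T^2$ (so convolutions now run over $\Z^2$), and carry an extra $L^2_{x_1}$ norm through every H\"older/Cauchy--Schwarz step. Since $\partial=\partial^\alpha$ with $|\alpha|\le 1$ acts only on $(t,\bar x)$, no velocity derivative is created and no boundary terms at $x_1=\pm 1$ appear in these bilinear/trilinear estimates; the Leibniz rule merely produces a sum of two contributions distributing the derivative on either factor, which is exactly why each right-hand side carries the sums $\sum_{|\alpha|\le 1}$.

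For the first inequality I expand $\partial(\nabla_x\phi\cdot vh_\pm)=\partial\nabla_x\phi\cdot vh_\pm+\nabla_x\phi\cdot v\,\partial h_\pm$, transform in $\bar x$ to get a convolution in $\bar k$, bound $|v|w\lesssim \<v\>^{1/2}w\cdot\<v\>^{1/2}$, and apply Cauchy--Schwarz in $(x_1,v)$, then in $t$, then the splitting $ab\le C_\eta a^2+\eta b^2$, and finally Young's convolution inequality on $\Z^2$ together with placing $\sup_{0\le t\le T}$ on the $\wh{\partial^{\alpha_1}E}$ factor, just as in \eqref{32a}. The second inequality is the easiest: absorb $|v|w\mu^{1/2}\lesssim\mu^{1/8}$ and apply Cauchy--Schwarz in $(x_1,v,t)$ plus $ab\le C_\eta a^2+\eta b^2$. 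For the $\Gamma_\pm$ bound, I apply the pointwise trilinear estimate \eqref{35a} after splitting $\partial\Gamma_\pm(h,h)=\sum_{\alpha_1+\alpha_2=\alpha}\Gamma_\pm(\partial^{\alpha_1}h,\partial^{\alpha_2}h)$ and writing the tangential transform as a convolution in $\bar k$; the inner product against $w^2\wh{\partial h_\pm}(\bar k)$ is then bounded by an $L^2_v$-norm of $w\wh{\partial^{\alpha_1}h}$ times two $L^2_{D,w}$ factors. H\"older in $x_1$, Cauchy--Schwarz in $t$ with an $\eta$-absorption on the $\wh{\partial h}$ factor, and Young in $\bar k$ reproduce \eqref{45a} in the finite-channel setting.

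The last estimate for $\nabla_x\phi\cdot\nabla_v h_\pm$ is the main obstacle. In the Landau case the factor $\<v\>^{\gamma/2}$ marries $\nabla_v$ to the dissipation via the identity $|\cdot|_{L^2_{D,w}}^2\sim|\<v\>^{\gamma/2}w\nabla_v\cdot|_{L^2_v}^2+\cdots$, so the first factor sits inside the tangential convolution cleanly, and the leftover $\eta$-term $\<v\>^{-\gamma/2}w\wh{\partial h}$ is meant to be absorbed by the time-velocity dissipation $\sqrt{qN}\|\<v\>^{\vt/2}(1+t)^{-(N+1)/2}w\wh{\partial h}\|$ once \eqref{62a} is closed (using $-\gamma\le\vt$). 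The Boltzmann case is the genuinely delicate one, since a full $\nabla_v$ on $h$ exceeds the $\<D_v\>^s$ available in $|\cdot|_{L^2_D}\sim|\<v\>^{(\gamma+2s)/2}\<D_v\>^s\cdot|_{L^2_v}$. Mirroring \eqref{116a}--\eqref{217}, I put the half-derivative loss on $\partial h_\pm$ via the symbol-class embedding $\<v\>^{-\gamma/2}\<\xi\>^{1-s}\in S(\<v\>^{\gamma/2}\<\xi\>^s+\<v\>^{-\gamma/(2(2s-1))})$ and invoke \cite[Lemma~2.4, Corollary~2.5]{Deng2020a}; the assumption $\gamma+2s\ge 1$ forces $-\gamma/(2(2s-1))\le 1/2$, so the residual polynomial weight is no worse than $\<v\>^{1/2}$ and is absorbable in the same $\eta$-term. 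The remaining work is the now-routine convolution/Young argument together with the $L^2_{x_1}$ integration; the only delicate bookkeeping is threading the Leibniz splitting $|\alpha|\le 1$ through the Boltzmann interpolation so that the symbol estimate lands on the correct factor.
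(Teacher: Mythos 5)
Your overall route matches the paper's: redo the torus estimates \eqref{32a}--\eqref{45a} with the Fourier transform taken only in $\bar x\in\T^2$, Leibniz splitting of $\partial^\alpha$, Cauchy--Schwarz in $(x_1,v)$ and in $t$, the splitting $ab\le C_\eta a^2+\eta b^2$, Young's convolution inequality on $\Z^2$ with $\sup_{0\le t\le T}$ placed on the field factor, the $\mu^{1/8}$ absorption for the linear field term, \eqref{35a} for $\Gamma_\pm$, and the symbol-class interpolation of \eqref{116a}--\eqref{217} (using $\gamma+2s\ge1$, $s\ge\frac12$) for the Boltzmann $\nabla_v$ term; the paper indeed proves only \eqref{84a} in detail and declares the remaining bounds analogous.

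There is, however, a genuine gap in how you treat the $x_1$ variable, tied to a misreading of the setup. First, $\partial=\partial^\alpha$ with $|\alpha|\le1$ is a full spatial derivative and includes $\partial_{x_1}$ (the paper's proof even takes $\partial=\partial_{x_1}$ as the representative case), so it does not act only on $(t,\bar x)$. More importantly, ``carrying an extra $L^2_{x_1}$ norm through every H\"older step'' does not close the estimates: after the tangential transform the convolution in $\bar k$ still leaves a pointwise product in $x_1$, e.g. one must bound $\int_{-1}^1 |\widehat{\partial^{\alpha_1}\nabla_x\phi}(\bar k-\bar l)|\,\big|\<v\>^{1/2}w\widehat{\partial^{\alpha_2}h}(\bar l)\big|_{L^2_v}\,\big|\<v\>^{1/2}w\widehat{\partial h}(\bar k)\big|_{L^2_v}\,dx_1$, and a trilinear integral of three $L^2_{x_1}$ functions on $[-1,1]$ is not controlled by the product of their $L^2_{x_1}$ norms. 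One factor must be placed in $L^\infty_{x_1}$, and the only available source is the one-dimensional embedding $H^1_{x_1}\hookrightarrow L^\infty_{x_1}$ (the ``Banach algebra structure of $H^1_{x_1}$'' the paper invokes), which is exactly what the extra $x_1$-derivative in the functionals \eqref{141}--\eqref{144} pays for: when the derivative falls on $\phi$ you put the $h$-factor in $L^\infty_{x_1}$ via $H^1_{x_1}$, and when it falls on $h$ you do the reverse. This, and not the Leibniz rule alone, is why every right-hand side of the lemma carries $\sum_{|\alpha|\le1}$ on both factors in $L^2_{x_1}$-based norms --- the sums are needed even when $\partial=I$. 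As written, your argument stalls at the $x_1$-integration; the remaining ingredients (the Landau dissipation identity for $\<v\>^{\gamma/2}\nabla_v$, absorption of the $\<v\>^{-\gamma/2}$ leftover by the time--velocity weight, the Boltzmann interpolation) are consistent with the paper.
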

	\begin{proof}
		Here we only prove \eqref{84a}, since the other estimates are similar to the argument from \eqref{32a} to \eqref{45a} by using the Banach algebra structure of $H^1_{x_1}$. 
		We also assume $\partial=\partial_{x_1}$, since the other cases are similar. 
		Noticing  
		\begin{align*}
			\partial(\nabla_x\phi\cdot vf_\pm) = \partial\nabla_x\phi\cdot vf_\pm + \nabla_x\phi\cdot v\partial f_\pm,
		\end{align*}	one has 
		\begin{align*}\notag
			&\quad\,\Big|\int^T_0\frac{1}{2}\big((w^2\partial{(\nabla_x\phi\cdot vf_\pm)})^\wedge,\widehat{\partial f_\pm}\big)_{L^2_{x_1,v}}\,dt\Big|\\
			&\lesssim\notag \int^T_0\int_{\R^3}\int^1_{-1}\int_{\Z^2}\Big(|\widehat{\partial\nabla_x\phi}({\bar{k}}-\bar{l})|\<v\>|\widehat{wf}(\bar{l})|+|\widehat{\nabla_x\phi}({\bar{k}}-\bar{l})|\<v\>|w\widehat{\partial f}(\bar{l})|\Big)\,d\Sigma(\bar{l})\\&\qquad\qquad\qquad\qquad\qquad\qquad\qquad\qquad\qquad\qquad\qquad\qquad\times|w\widehat{\partial f}({\bar{k}})|\,dx_1dvdt\\
			&\lesssim \int^T_0\int^1_{-1}{C_\eta}{}\Big(\int_{\Z^2}\Big(|e^{\delta t}\widehat{\partial\nabla_x\phi}({\bar{k}}-\bar{l})||e^{-\frac{\delta t}{2}}\<v\>^{1/2}\widehat{wf}(\bar{l})|_{L^2_v}\\&\qquad\qquad\qquad\qquad\qquad
			+|e^{\delta t}\widehat{\nabla_x\phi}({\bar{k}}-\bar{l})||e^{-\frac{\delta t}{2}}\<v\>^{1/2}w\widehat{\partial f}(\bar{l})|_{L^2_v}\Big)\,d\Sigma(\bar{l})\Big)^2\\
			&\qquad\qquad\qquad\qquad\qquad\qquad\qquad\qquad\qquad+{\eta^2}{}|e^{-\frac{\delta t}{2}}\<v\>^{1/2}w\widehat{\partial f}({\bar{k}})|^2_{L^2_v}\,dx_1dt.
		\end{align*}
		Taking the square root and summation over ${\bar{k}}\in\Z^2$, we have 
		\begin{align*}\notag
			&\quad\,\int_{\Z^2}\Big|\int^T_0\frac{1}{2}\big(w(\partial(\nabla_x\phi\cdot vf_\pm))^\wedge,w\widehat{\partial f_\pm}\big)_{L^2_{x_1,v}}\,dt\Big|^{1/2}d\Sigma({\bar{k}})\\
			&\notag\lesssim {C_\eta}{} \int_{\Z^2}\int_{\Z^2}\Big(\int^T_0\int^1_{-1}|e^{\delta t}\widehat{\partial\nabla_x\phi}({\bar{k}}-\bar{l})|^2|e^{-\frac{\delta t}{2}}\<v\>^{1/2}\widehat{wf}(\bar{l})|_{L^2_v}^2\,dx_1dt\Big)^{1/2}\,d\Sigma(\bar{l})d\Sigma({\bar{k}})\\
			&\quad+{C_\eta}{} \int_{\Z^2}\int_{\Z^2}\Big(\int^T_0\int^1_{-1}|e^{\delta t}\widehat{\nabla_x\phi}({\bar{k}}-\bar{l})|^2|e^{-\frac{\delta t}{2}}\<v\>^{1/2}w\widehat{\partial f}(\bar{l})|_{L^2_v}^2\,dx_1dt\Big)^{1/2}\,d\Sigma(\bar{l})d\Sigma({\bar{k}})\\
			&\quad+{\eta}{}\|e^{-\frac{\delta t}{2}}\<v\>^{1/2}w\widehat{\partial f}({\bar{k}})\|_{L^1_{\bar{k}}L^2_TL^2_{x_1,v}}\\
			&\notag\lesssim C_\eta \int_{\Z^2}\int_{\Z^2}\sup_{0\le t\le T}|e^{\delta t}\widehat{\partial\nabla_x\phi}({\bar{k}}-\bar{l})|\Big(\int^T_0|e^{-\frac{\delta t}{2}}\<v\>^{1/2}\widehat{wf}(\bar{l})|_{L^2_v}^2\,dt\Big)^{1/2}\,d\Sigma(\bar{l})d\Sigma({\bar{k}})\\
			&\notag\quad+{C_\eta}{} \int_{\Z^2}\int_{\Z^2}\sup_{0\le t\le T}|e^{\delta t}\widehat{\nabla_x\phi}({\bar{k}}-\bar{l})|\Big(\int^T_0|e^{-\frac{\delta t}{2}}\<v\>^{1/2}w\widehat{\partial f}(\bar{l})|_{L^2_v}^2\,dt\Big)^{1/2}\,d\Sigma(\bar{l})d\Sigma({\bar{k}})\\
			&\quad+{\eta}{}\|e^{-\frac{\delta t}{2}}\<v\>^{1/2}w\widehat{\partial f}({\bar{k}})\|_{L^1_{\bar{k}}L^2_TL^2_{x_1,v}}\\
			&\lesssim{C_\eta}{} \|e^{\delta t}\widehat{\partial\nabla_x\phi}\|_{L^1_{\bar{k}}L^\infty_TL^2_{x_1}}\|e^{-\frac{\delta t}{2}}\<v\>^{1/2}\widehat{wf}\|_{L^1_{\bar{k}}L^2_TL^2_{x_1,v}}\\
			&\quad+\|e^{\delta t}\widehat{\nabla_x\phi}\|_{L^1_{\bar{k}}L^\infty_TL^2_{x_1}}\|e^{-\frac{\delta t}{2}}\<v\>^{1/2}w\widehat{\partial f}\|_{L^1_{\bar{k}}L^2_TL^2_{x_1,v}}+{\eta}{}\|e^{-\frac{\delta t}{2}}\<v\>^{1/2}w\widehat{\partial f}\|_{L^1_{\bar{k}}L^2_TL^2_{x_1,v}}.
		\end{align*}Here we have used Young's inequality and Fubini's Theorem. 
		\qe\end{proof}

	We are ready to prove the result in finite channel. 
	\begin{proof}[Proof of Theorem \ref{Channel1}]
		Define $h=e^{\delta t}f$ and let $\partial=\partial^\alpha$ with $|\alpha|\le 1$. Applying $\partial$ on \eqref{1} and multiplying $we^{\delta t}$, we have 
		\begin{multline*}\notag\partial_tw\pa h_\pm +\frac{qN\<v\>^\vt}{(1+t)^{N+1}}w\partial h_\pm + v\cdot\nabla_xw(\partial h_\pm) \pm \frac{1}{2}w\partial(\nabla_x\phi\cdot vh_\pm)    
			\mp w\partial(\nabla_x\phi\cdot \nabla_vh_\pm)\\
			\pm e^{\delta t}\partial\nabla_x\phi\cdot vw\mu^{1/2} - wL_\pm \partial h = we^{-\delta t}\partial(\Gamma_{\pm}(h,h)) + \delta wh_\pm.
		\end{multline*}
		With time weight $e^{\delta t}$, the only extra term is $\delta wh_\pm$. Then following the argument we used in Theorem \ref{Thm41f}, we have 
		\begin{align}\notag\label{7.1}
			&\quad\,\sum_{|\alpha|\le 1} \|e^{\delta t}\partial^\alpha(\widehat{a_+},\widehat{a_-},\widehat{b},\widehat{c})\|_{L^1_{\bar{k}}L^2_TL^2_{x_1}}+\sum_{|\alpha|\le 1}\|e^{\delta t}\widehat{\partial^\alpha E}\|_{L^1_{\bar{k}}L^2_TL^2_{x_1}}\\
			\notag&\lesssim \sum_{|\alpha|\le 1}\big(\|\widehat{\partial^\alpha h}(T)\|_{L^1_{\bar{k}}L^2_{x_1,v}}+\|\widehat{\partial^\alpha f_0}\|_{L^1_{\bar{k}}L^2_{x_1,v}}\big)\\
			&\notag\quad+\sum_{|\alpha|\le 1}\big(\|e^{\delta t}\widehat{\partial^\alpha  E}\|_{L^1_{\bar{k}}L^\infty_TL^2_{x_1}}+\|\widehat{\partial^\alpha h}\|_{L^1_{\bar{k}}L^\infty_TL^2_{x_1}L^2_v}\big)\sum_{|\alpha|\le 1}\|\widehat{\partial^\alpha h}\|_{L^1_{\bar{k}}L^2_TL^2_{x_1}L^2_{D}}\\
			&\quad+\sum_{|\alpha|\le1}\|\{\I-\P\}\widehat{\partial^\alpha h}\|_{L^1_{\bar{k}}L^2_TL^2_{x_1}L^2_D} + \delta^{1/2}\|\partial^\alpha h\|_{L^1_{\bar{k}}L^2_TL^2_{x_1}L^2_{v}}.
		\end{align}
		Note from \eqref{103a} and \eqref{115} that 
		\begin{align}\label{146}
			|\partial f_\pm(-1,\bar{k},-v_1,\bar{v})|=|\partial f_\pm(-1,\bar{k},v_1,\bar{v})|,\quad |\partial f_\pm(1,\bar{k},-v_1,\bar{v})|=|\partial f_\pm(1,\bar{k},v_1,\bar{v})|,
		\end{align}on $v_1\neq 0$. 
		Thus, by change of variable $v_1\mapsto -v_1$, 
		\begin{align}\label{135}
			\int_{\R^3}\big(v_1|\widehat{w\partial f_\pm}(1)|^2-v_1|\widehat{w\partial f_\pm}(-1)|^2\,\big)dv = 0. 
		\end{align}
		Then \eqref{62a} becomes 
		\begin{multline}
			\quad\,\frac{1}{2}\partial_t|\widehat{w\partial h_\pm}|^2_{L^2_{x_1,v}}  +\frac{qN}{(1+t)^{N+1}}\Big|\<v\>^{\frac{\vt}{2}}w\widehat{\partial h_\pm}\Big|^2_{L^2_{x_1,v}} \le 
			\Re(\widehat{wL_\pm \partial h},\widehat{w\partial h_\pm})_{L^2_{x_1,v}}\\
			\mp \frac{1}{2}\Re(w(\partial(\nabla_x\phi\cdot vh_\pm))^\wedge,\widehat{w\partial h_\pm})_{L^2_{x_1,v}}\pm \Re(w(\partial(\nabla_x\phi\cdot \nabla_vh_\pm))^\wedge,\widehat{w\partial h_\pm})_{L^2_{x_1,v}}
			\\\qquad\mp \Re(e^{\delta t}\widehat{w\partial\nabla_x\phi}\cdot v\mu^{1/2},\widehat{w\partial h_\pm})_{L^2_{x_1,v}} +\Re(e^{-\delta t}w(\partial\Gamma_{\pm}(h,h))^\wedge,\widehat{w\partial h_\pm})_{L^2_{x_1,v}}.\label{134}
		\end{multline}
		If $q\neq 0$, the second to fifth terms on the right hand of \eqref{134} can be estimated by using Lemma \ref{Lem33a}. 
		If $q=0$, we further look at the forth term of the right hand of \eqref{134}:
		\begin{align*}
			&\quad\,\sum_\pm \pm\Re\big(\widehat{\partial\nabla_x\phi}\cdot v\mu^{1/2},\widehat{\partial f_\pm}\big)_{L^2_{x_1,v}}= \Re(\widehat{\partial\nabla_x\phi},\widehat{\partial G})_{L^2_{x_1}}\\
			&= \Re\big(\widehat{\partial\phi}(1)\,|\,\widehat{\partial G}(1)\big)-\Re\big(\widehat{\partial\phi}(-1)\,|\,\widehat{\partial G}(-1)\big)-\Re\big(\widehat{\partial\phi},\widehat{\partial\nabla_x G}\big)_{L^2_{x_1}},
		\end{align*}
		where $G$ is defined in \eqref{93a}. Note from \eqref{Neumann_finite}, \eqref{103a} and \eqref{115} we have the following. $\widehat{\partial\phi}(\pm1)=0$ when $\partial=\partial_{x_1}$. For the case $\partial=I,\partial_{x_2},\partial_{x_3}$, we know that 
		$v_1\mu^{1/2}f_\pm(v_1)$ is odd with respect to $v_1$ and hence, $\partial G(\pm 1)=0$. In any cases, we have $\big(\widehat{\partial\phi}(1)\,|\,\widehat{\partial G}(1)\big)=\big(\widehat{\partial\phi}(-1)\,|\,\widehat{\partial G}(-1)\big)=0$. 
		Then multiplying $e^{2\delta t}$, we have from \eqref{98a}$_1$ and \eqref{2} that 
		\begin{align*}
			&\quad\,e^{2\delta t}\sum_\pm \pm\Re(\widehat{\partial_{x_1}\nabla_x\phi}\cdot v\mu^{1/2},\widehat{\partial f_\pm})_{L^2_{x_1,v}}\\
			&= -e^{2\delta t}\Re\big(\widehat{\partial\phi},{(\partial\partial_t(a_+-a_-))^\wedge}\big)_{L^2_{x_1}}
			= -e^{2\delta t}\Re\big(\widehat{\partial\phi},-{(\partial\partial_t(\Delta_x\phi))^\wedge}\big)_{L^2_{x_1}}\\
			&=
		-\Re\big(\widehat{\partial\nabla_x\phi},{(\partial\partial_t\nabla_x\phi)^\wedge}\big)_{L^2_{x_1}}
			= \frac{1}{2}\partial_t\|e^{\delta t}\partial\nabla_x\phi\|^2_{L^2_{x_1}}-\delta\|e^{\delta t}\partial\nabla_x\phi\|^2_{L^2_{x_1}}. 
		\end{align*}
		Together with Lemma \ref{Lem33a}, 
		taking summation on $\pm$ of \eqref{134}, integration on $t\in[0,T]$, absolute value, square root and summation over $\bar{k}\in\Z^2$ and $|\alpha|\le 1$, we have that when $q=0$, 
		\begin{align}\notag\label{133}
			&\quad\,\sum_{|\alpha|\le 1}\|\widehat{\partial^\alpha h}\|_{L^1_{\bar{k}}L^\infty_TL^2_{x_1,v}} +\sum_{|\alpha|\le 1}\|\{\I-\P\}\widehat{\partial^\alpha h}\|_{L^1_{\bar{k}}L^2_TL^2_{x_1}L^2_{D}}+\sum_{|\alpha|\le 1}\|e^{\delta t}\wh{\partial^\alpha E}\|_{L^1_{\bar{k}}L^\infty_TL^2_{x_1}}\\
			&\notag\lesssim \sum_{|\alpha|\le 1}\|\widehat{\partial^\alpha f_0}\|_{L^1_{\bar{k}}L^\infty_TL^2_{x_1,v}}+\sum_{|\alpha|\le 1}\|\widehat{\partial^\alpha E_0}\|_{L^1_{\bar{k}}L^2_{x_1}}+\eta\sum_{|\alpha|\le 1}\|e^{-\frac{\delta t}{2}}\<v\>^{-\gamma/2}\widehat{\partial^\alpha h}\|_{L^1_{\bar{k}}L^2_TL^2_{x_1,v}}\\
			&\notag\qquad+\Big(C_\eta \E_T+\eta\Big)\Big(\sum_{|\alpha|\le 1}\|e^{-\frac{\delta t}{2}}\<v\>^{1/2}\widehat{\partial^\alpha h}\|_{L^1_{\bar{k}}L^2_TL^2_{x_1,v}}+\D_T\Big) \\
			&\qquad+\delta^{1/2} \sum_{|\alpha|\le 1}\big(\|\partial^\alpha h_\pm\|_{L^1_{\bar{k}}L^2_TL^2_{x_1,v}}+\|e^{\delta t}\partial^\alpha\nabla_x\phi\|^2_{L^1_{\bar{k}}L^2_TL^2_{x_1}}\big),
		\end{align}
		where $\E_T$ and $\D_T$ are given by \eqref{141} and \eqref{142} respectively. 
		Similarly, when $q\neq 0$, we use \eqref{36c} to find that 
		\begin{align}\label{7.6}\notag
			&\sum_{|\alpha|\le 1}\Big(\|w\widehat{\partial^\alpha h}\|_{L^1_{\bar{k}}L^\infty_TL^2_{x_1,v}}+\sqrt{qN}\|{\<v\>^{\frac{\vt}{2}}}{(1+t)^{-\frac{N+1}{2}}}\widehat{\partial^\alpha{h}}\|_{L^1_kL^2_TL^2_v} +\|\widehat{\partial^\alpha h}\|_{L^1_{\bar{k}}L^2_TL^2_{x_1}L^2_{D,w}}\Big)\\
			&\notag\lesssim \sum_{|\alpha|\le 1}\Big(\|w\widehat{\partial^\alpha f_0}\|_{L^1_{\bar{k}}L^\infty_TL^2_{x_1,v}}+\|\widehat{\partial^\alpha E_0}\|_{L^1_{\bar{k}}L^2_{x_1}}\Big)+\eta\sum_{|\alpha|\le 1}\|e^{-\frac{\delta t}{2}}w\<v\>^{-\gamma/2}\widehat{\partial^\alpha h}\|_{L^1_{\bar{k}}L^2_TL^2_{x_1,v}}\\&\quad\notag
			+\Big(C_\eta \E_{T,w}+\eta\Big)\Big(\sum_{|\alpha|\le 1}\|e^{-\frac{\delta t}{2}}w\<v\>^{1/2}\widehat{\partial^\alpha h}\|_{L^1_{\bar{k}}L^2_TL^2_{x_1,v}}+\D_{T,w}\Big) \\
			&\notag\quad+\delta^{1/2}\sum_{|\alpha|\le 1}\big(\|\partial^\alpha h_\pm\|_{L^1_{\bar{k}}L^2_TL^2_{x_1,v}}+\|e^{\delta t}\partial^\alpha\nabla_x\phi\|^2_{L^1_{\bar{k}}L^2_TL^2_{x_1}}\big)\\
			&\quad+\sum_{|\alpha|\le 1}\Big(\|\widehat{\partial^\alpha h}\|_{L^1_{\bar{k}}L^2_TL^2_{x_1}L^2_{D}} + \|e^{\delta t}\widehat{\partial^\alpha E}\|_{L^1_{\bar{k}}L^2_TL^2_{x_1}}\Big),
		\end{align}
		where $\E_{T,w}$ and $\D_{T,w}$ are given by \eqref{143} and \eqref{144}. Notice that $\|\partial^\alpha h_\pm\|_{L^1_{\bar{k}}L^2_TL^2_{x_1,v}}\lesssim \|\partial^\alpha h_\pm\|_{L^1_{\bar{k}}L^2_TL^2_{x_1}L^2_D}$. Then taking linear combination $\eqref{133}+\kappa\times\eqref{7.1} + \kappa^2\times\eqref{7.6}$ and letting $\delta,\eta,\kappa>0$ suitably small, we have 
		\begin{multline}\label{7.7}
			\E_T+\D_T+\E_{T,w}+\D_{T,w} \lesssim \sum_{|\alpha|\le 1}\Big(\|w\widehat{\partial^\alpha f_0}\|_{L^1_{\bar{k}}L^\infty_TL^2_{x_1,v}}+\|\widehat{\partial^\alpha E_0}\|_{L^1_{\bar{k}}L^2_{x_1}}\Big)\\
			+\Big(C_\eta (\E_T+\E_{T,w})+\eta\Big)\Big(\sum_{|\alpha|\le 1}\|e^{-\frac{\delta t}{2}}w\<v\>^{1/2}\widehat{\partial^\alpha h}\|_{L^1_{\bar{k}}L^2_TL^2_{x_1,v}}+\D_T+\D_{T,w}\Big) \\
			+ \eta
			\sum_{|\alpha|\le 1}\|e^{-\frac{\delta t}{2}}w\<v\>^{-\gamma/2}\widehat{\partial^\alpha h}\|_{L^1_{\bar{k}}L^2_TL^2_{x_1,v}}.
		\end{multline}
		Then we discuss the {\it a priori} estimates in two cases. 
		
		\medskip \noindent{\bf Case I: $\gamma\ge -1$ for VPL case and $\gamma+2s\ge 1$, $1/2\le s<1$ for VPB case.} 
		In this case, we have 
		\begin{align*}
			\|e^{-\frac{\delta t}{2}}\<v\>^{\gamma/2}\widehat{\partial^\alpha f}\|_{L^1_{\bar{k}}L^2_TL^2_{x_1,v}}
			\le \|e^{-\frac{\delta t}{2}}\<v\>^{1/2}\widehat{\partial^\alpha f}\|_{L^1_{\bar{k}}L^2_TL^2_{x_1,v}}
			\lesssim \D_T. 
		\end{align*} 
		Letting $q=0$ and $\eta>0$ small enough in \eqref{7.7}, we have
		\begin{align*}
			\E_T + \D_T &\lesssim \sum_{|\alpha|\le 1}\big(\|\widehat{\partial f_0}\|_{L^1_{\bar{k}}L^\infty_TL^2_{x_1,v}}+\|\widehat{\partial E_0}\|^2_{L^1_{\bar{k}}L^2_{x_1}}\big) + \E_T\D_T.
		\end{align*}
		This concludes the proof when $\gamma\ge -1$ in Landau case and $\gamma+2s\ge 1$, $\frac{1}{2}\le s<1$ in Boltzmann case by using the standard continuity argument under the smallness of \eqref{124}. 

		\medskip \noindent{\bf Case II: $-2\le\gamma<-1$ for VPL case.}
		In this case, recall that we choose $\vt=-\gamma$. Then we have 
		\begin{multline*}
			\|e^{-\frac{\delta t}{2}}w\<v\>^{1/2}\widehat{\partial^\alpha h}\|_{L^1_{\bar{k}}L^2_TL^2_{x_1,v}}
			\lesssim\|e^{-\frac{\delta t}{2}}w\<v\>^{\gamma/2}\widehat{\partial^\alpha h}\|_{L^1_{\bar{k}}L^2_TL^2_{x_1,v}}\\
			\lesssim \|{\<v\>^{\frac{\vt}{2}}}{(1+t)^{-\frac{N+1}{2}}}\widehat{\partial^\alpha{h}}\|_{L^1_kL^2_TL^2_v} \le \D_{T,w}.
		\end{multline*}
		Letting $\eta>0$ in \eqref{7.7} small enough, we have 
		\begin{multline}\label{147}
			\E_{T,w}+\D_{T,w}+\E_T + \D_T \lesssim \sum_{|\alpha|\le 1}\big(\|\widehat{w\partial^\alpha f_0}\|^2_{L^1_{\bar{k}}L^2_{x_1,v}}+\|\widehat{\partial^\alpha E_0}\|^2_{L^1_{\bar{k}}L^2_{x_1}}\big)\\+ (\E_T+ \E_{T,w})(\D_T +\D_{T,w}).
		\end{multline}
		Once \eqref{147} is obtained, then \eqref{13} follows from the standard continuity argument and local existence from Section \ref{Sec_loc} under the smallness of $\sum_{|\alpha|\le 1}\big(\|\widehat{w\partial^\alpha f_0}\|^2_{L^1_{\bar{k}}L^2_{x_1,v}}+\|\widehat{\partial^\alpha E_0}\|^2_{L^1_{\bar{k}}L^2_{x_1}}\big)$; cf. \cite{Guo2002a}. 
		This concludes the proof of the global existence and large-time behavior of mild solutions. 

		The uniqueness of the initial boundary value problem \eqref{1} can be proved by applying the similar method as the previous energy estimates and is now quite standard. Also the local solution we extend from section \ref{localsolution} is unique. So we omit these analogous details. 
		The positivity of the solutions to VPL systems can be obtained from \cite[Lemma 12, page 800]{Guo2012}. The positivity of solutions to VPB systems can be guaranteed by \cite{Guo2002}. This completes the proof of Theorem \ref{Channel1}. 
		\qe\end{proof}

	\begin{proof}[Proof of Theorem \ref{Channel2}]
		We will show that the regularity of the initial data can propagate along time. Let $m\ge 0$. Since $\bar{k}$ doesn't concern the boundary $x_1\in[-1,1]$, following closely the argument proving Theorem \ref{Thm41f}, Lemma \ref{Lem33a}, we have the following estimates:
		\begin{align*}\notag
			&\quad\,\sum_{|\alpha|\le 1} \|e^{\delta t} \partial^\alpha(\widehat{a_+},\widehat{a_-},\widehat{b},\widehat{c})\|_{L^1_{\bar{k},m}L^2_TL^2_{x_1}}+\sum_{|\alpha|\le 1}\|e^{\delta t}\widehat{\partial^\alpha E}\|_{L^1_{\bar{k},m}L^2_TL^2_{x_1}}\\
			\notag&\lesssim \sum_{|\alpha|\le 1}\big(\|\widehat{\partial^\alpha h}(T)\|_{L^1_{\bar{k},m}L^2_{x_1,v}}+\|\widehat{\partial^\alpha f_0}\|_{L^1_{\bar{k},m}L^2_{x_1,v}}\big)+\sum_{|\alpha|\le1}\|\{\I-\P\}\widehat{\partial^\alpha h}\|_{L^1_{\bar{k},m}L^2_TL^2_{x_1}L^2_D}\\
			&\qquad+\sum_{|\alpha|\le 1}\big(\|e^{\delta t}\widehat{\partial^\alpha  E}\|_{L^1_{\bar{k},m}L^\infty_TL^2_{x_1}}+\|\widehat{\partial^\alpha h}\|_{L^1_{\bar{k},m}L^\infty_TL^2_{x_1}L^2_v}\big)\sum_{|\alpha|\le 1}\|\widehat{\partial^\alpha h}\|_{L^1_{\bar{k},m}L^2_TL^2_{x_1}L^2_{D}},
		\end{align*}and
		\begin{multline*}\notag
			\quad\,\int_{\Z^2}\Big|\int^T_0\frac{1}{2}\big((w\partial(\nabla_x\phi\cdot vh_\pm))^\wedge,\<\bar{k}\>^{2m}\widehat{w\partial h_\pm}\big)_{L^2_{x_1,v}}\,dt\Big|^{1/2}d\Sigma(\bar{k})\\
			\lesssim{C_\eta}{} \sum_{|\alpha|\le 1} \|e^{\delta t}\widehat{\partial^\alpha \nabla_x\phi}\|_{L^1_{\bar{k},m}L^\infty_TL^2_{x_1}}\sum_{|\alpha|\le 1}\|e^{-\frac{\delta t}{2}}\<v\>^{1/2}\widehat{w\partial^\alpha h}\|_{L^1_{\bar{k},m}L^2_TL^2_{x_1,v}}\\\qquad\qquad\qquad\qquad\qquad+{\eta}{}\sum_{|\alpha|\le 1}\|e^{-\frac{\delta t}{2}}\<v\>^{1/2}\widehat{w\partial^\alpha h}\|_{L^1_{\bar{k},m}L^2_TL^2_{x_1,v}},
		\end{multline*}
	\begin{multline*}
			\notag\quad\,\int_{\Z^2}\Big|\int^T_0\big((\partial(\nabla_x\phi\cdot w\nabla_vh_\pm))^\wedge,\<\bar{k}\>^{2m}\widehat{w\partial h_\pm}\big)_{L^2_{x_1,v}}\,dt\Big|^{1/2}\,d\Sigma({\bar{k}})\\
			\notag\lesssim {C_\eta}\sum_{|\alpha|\le 1}\|e^{\delta t}\widehat{\partial^\alpha \nabla_x\phi}\|_{L^1_{\bar{k},m}L^\infty_TL^2_{x_1}}\sum_{|\alpha|\le 1}\|e^{-\frac{\delta t}{2}}\<v\>^{\gamma/2}(w\nabla_v{\partial^\alpha h})^\wedge\|_{L^1_{\bar{k},m}L^2_TL^2_{x_1,v}}\\\qquad\qquad\qquad\qquad\qquad+\eta\sum_{|\alpha|\le 1}\|e^{-\frac{\delta t}{2}}\<v\>^{-\gamma/2}\widehat{w\partial^\alpha h}\|_{L^1_{\bar{k},m}L^2_TL^2_{x_1,v}},
		\end{multline*}
	\begin{multline*}
			\notag\quad\,\int_{\Z^2}\Big|\int^T_0(\widehat{w\partial\nabla_x\phi}\cdot v\mu^{1/2},\<\bar{k}\>^{2m}\widehat{w\partial h_\pm})_{L^2_{x_1,v}}dt\Big|^{1/2}d\Sigma({\bar{k}})\\\qquad\qquad\qquad\qquad\le C_\eta\|\widehat{\partial\nabla_x\phi}\|_{L^1_{\bar{k},m}L^2_T}+\eta\|\widehat{w\partial h_\pm}\mu^{1/8}\|_{L^1_{\bar{k},m}L^2_TL^2_{x_1,v}}.
		\end{multline*}
		For the boundedness of $\Gamma_\pm$, we have 
		\begin{multline*}\notag
			\int_{\Z^2}\Big(\int^T_0\big|(
			w^2\partial(\Gamma_{\pm}(f,g))^\wedge,\<\bar{k}\>^{2m}\widehat{h_\pm})_{L^2_{x_1,v}}\big|dt\Big)^{1/2}d\Sigma({\bar{k}})\\ \le\notag C_\eta\sum_{|\alpha|\le 1}\|w\widehat{\partial^\alpha f}\|_{L^1_{\bar{k},m}L^\infty_TL^2_{x_1,v}}\sum_{|\alpha|\le 1}\|\widehat{\partial^\alpha f}\|_{L^1_{\bar{k},m}L^2_TL^2_{x_1}L^2_{D,w}}+\eta\sum_{|\alpha|\le 1}\|\widehat{\partial^\alpha f}\|^2_{L^1_{\bar{k},m}L^2_TL^2_{x_1}L^2_{D,w}}.
		\end{multline*}
		
		Consequently, noticing $\<\bar{k}\>^{m}$ doesn't affect the boundary terms, one can apply similar calculation for deriving \eqref{147} to obtain that 
		\begin{align*}
			&\sum_{|\alpha|\le 1}\Big(\|\widehat{\partial^\alpha h}\|_{L^1_{\bar{k},m}L^\infty_TL^2_{x_1,v}} +\|\wh{\partial^\alpha E}\|_{L^1_{\bar{k},m}L^\infty_TL^2_{x_1}}+\|\widehat{\partial^\alpha h}\|_{L^1_{\bar{k},m}L^2_TL^2_{x_1}L^2_{D}}+\|\widehat{\partial^\alpha E}\|_{L^1_{\bar{k},m}L^2_TL^2_{x_1}}\\
			&\ +\|w\widehat{\partial^\alpha h}\|_{L^1_{\bar{k},m}L^\infty_TL^2_{x_1,v}}+\sqrt{qN}\|{\<v\>^{\frac{\vt}{2}}}{(1+t)^{-\frac{N+1}{2}}}\widehat{\partial^\alpha{h}}\|_{L^1_kL^2_TL^2_v} +\|\widehat{\partial^\alpha h}\|_{L^1_{\bar{k},m}L^2_TL^2_{x_1}L^2_{D,w}}\Big)\\
			&\lesssim \sum_{|\alpha|\le 1}\big(\|\widehat{w\partial^\alpha f_0}\|^2_{L^1_{\bar{k},m}L^\infty_TL^2_{x_1,v}}+\|\widehat{\partial^\alpha E_0}\|^2_{L^1_{\bar{k},m}L^2_{x_1}}\big),
		\end{align*}provided that $\varepsilon_0>0$ in \eqref{1.23} is suitably small. 
		This completes the proof of Theorem \ref{Channel2} for VPL systems when $-2\le\gamma<-1$. The proof of other cases follows similarly and we omit the details for brevity.  
		\qe\end{proof}

	\section{Local existence}\label{Sec_loc}
	
	In this section, we are concerned with the local-in-time existence of solutions to problem \eqref{1}. 
	For brevity of presentation, we only give the proof for Vlasov-Poisson-Landau equation with the specular reflection boundary condition in the finite channel, since the other cases are similar. 
	
	\begin{Thm}\label{localsolution}
		Let $\gamma\ge -2$ and $w$ be given by \eqref{w2}. Then there exists $\ve_0>0$, $T_0>0$ such that if $F_0(x_1,\bar{x},v)=\mu+\mu^{1/2}f_0(x_1,\bar{x},v)\ge 0$ and 
		\begin{align*}
			\sum_{|\alpha|\le 1}\big(\|\widehat{w\partial^\alpha f_0}\|^2_{L^1_{\bar{k}}L^2_{x_1,v}}+\|\widehat{\partial^\alpha E_0}\|^2_{L^1_{\bar{k}}L^2_{x_1}}\big)\le\ve_0,
		\end{align*}
		then the specular reflection boundary problem for VPL systems \eqref{1}, \eqref{Neumann_finite}, \eqref{specular_finite} and \eqref{VPL} admits a unique solution $f(t,x,v)$ on $t\in[0,T_0]$, $x\in\Omega=[-1,1]\times\T^2$, $v\in\R^3$, satisfying estimate 
		\begin{align}
			\label{169}\E_{{T_0}}+\D_{T_0}+\E_{{T_0},w}+\D_{{T_0},w}\lesssim  \sum_{|\alpha|\le 1}\big(\|\widehat{w\partial^\alpha f_0}\|_{L^1_{\bar{k}}L^2_{x_1,v}}+\|\widehat{\partial^\alpha E_0}\|_{L^1_{\bar{k}}L^2_{x_1}}\big),
		\end{align}
		where $\E_{{T_0}}$, $\D_{T_0}$, $\E_{{T_0},w}$ and $\D_{{T_0},w}$ are defined by \eqref{141}, \eqref{142}, \eqref{143} and \eqref{144} respectively.
	\end{Thm}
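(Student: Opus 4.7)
The plan is to construct the local solution by a regularization-and-limit procedure driven by the \emph{a priori} estimates already developed in Sections \ref{MacroFinite} and \ref{SecFinite}. The key observation is that once the uniform bounds from Theorems \ref{Thm41f}--\ref{Channel1} are available on a short time interval, the difficult work is to produce an approximating sequence of smooth solutions respecting the specular reflection boundary and the Neumann condition for $\phi$.

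First, I would regularize the initial datum $f_0$ by mollification that preserves the reflection symmetry: extend $f_0$ across $x_1 = \pm 1$ by even reflection in $(x_1, v_1)$, mollify in $(x, v)$ with a standard mollifier, and truncate in velocity to obtain smooth data $f_0^\ve$ satisfying \eqref{specular_finite}, \eqref{conservation_finite}, $F_0^\ve = \mu + \mu^{1/2} f_0^\ve \ge 0$, and the smallness
\begin{equation*}
\sum_{|\alpha|\le 1}\bigl(\|\widehat{w\pa^\alpha f_0^\ve}\|_{L^1_{\bar{k}}L^2_{x_1,v}} + \|\widehat{\pa^\alpha E_0^\ve}\|_{L^1_{\bar{k}}L^2_{x_1}}\bigr)\le 2\ve_0
\end{equation*}
uniformly in $\ve$, together with convergence to $f_0$ in the same norm as $\ve\to 0$. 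For each such smooth, symmetric initial datum, the high-regularity local theory for the VPL system with specular reflection (as in \cite{Dong2020, Guo2002a} adapted to the finite channel) yields a smooth local solution $f^\ve$ on a maximal interval $[0, T^\ve)$.

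Next, I would derive uniform bounds in $\ve$. Since $f^\ve$ is smooth and satisfies both boundary conditions exactly, the macroscopic estimate of Theorem \ref{Thm41f} and the microscopic calculation leading to \eqref{147} apply to $f^\ve$ without modification, giving
\begin{equation*}
\E_T^\ve + \D_T^\ve + \E_{T,w}^\ve + \D_{T,w}^\ve \lesssim \mathcal{I}_0 + (\E_T^\ve + \E_{T,w}^\ve)(\D_T^\ve + \D_{T,w}^\ve),
\end{equation*}
where $\mathcal{I}_0 := \sum_{|\alpha|\le 1}(\|\widehat{w\pa^\alpha f_0}\|_{L^1_{\bar{k}}L^2_{x_1,v}} + \|\widehat{\pa^\alpha E_0}\|_{L^1_{\bar{k}}L^2_{x_1}})$. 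Under the assumed smallness $\mathcal{I}_0 \le \ve_0$, a standard continuity/bootstrap argument yields a uniform time $T_0 > 0$ depending only on $\ve_0$ such that $T^\ve \ge T_0$ and the bound \eqref{169} holds with $f$ replaced by $f^\ve$.

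Finally, I would pass to the limit by a Cauchy argument. The difference $f^\ve - f^{\ve'}$ solves a linearized system whose source terms are bilinear in $(f^\ve, f^{\ve'})$ and the associated fields. Repeating the energy-type estimate on the difference in the same framework, but in a slightly weaker norm without the velocity weight $w$ to avoid regularity losses in the nonlinear coupling, one obtains a contraction estimate with constant proportional to the uniform bound; this produces a Cauchy sequence on $[0, T_0]$. The limit $f$ satisfies the integral form of \eqref{1}, inherits the estimate \eqref{169} by lower semi-continuity, retains the boundary condition \eqref{specular_finite} by trace continuity, and $F = \mu + \mu^{1/2} f \ge 0$ follows from the pointwise positivity of $F^\ve$ via \cite[Lemma 12]{Guo2012}. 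The main obstacle is ensuring that the regularization simultaneously preserves the reflection symmetry at $x_1 = \pm 1$ and the conservation laws \eqref{conservation_finite} so that the macroscopic test functions $\widehat{\Phi_c}, \widehat{\Phi_b}, \widehat{\Phi_a}$ of Section \ref{MacroFinite} remain well defined for $f^\ve$; the even-reflection construction handles the former, and a vanishing multiple of $\mu^{1/2}$ can be subtracted from $f_0^\ve$ to enforce the latter without altering the limiting smallness. Uniqueness on $[0, T_0]$ is proved by an analogous energy estimate on the difference of two mild solutions.
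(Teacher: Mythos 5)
Your overall architecture (mollify the data, produce approximate solutions, run the a priori estimates of Sections \ref{MacroFinite} and \ref{SecFinite} uniformly, pass to the limit by a Cauchy argument, and get positivity/uniqueness as in the paper) matches the spirit of Section \ref{Sec_loc}, but it rests on one step that is not justified and that is precisely the nontrivial content the section is there to supply: the assertion that ``for each smooth, symmetric initial datum, the high-regularity local theory for the VPL system with specular reflection (as in \cite{Dong2020,Guo2002a}) yields a smooth local solution $f^\ve$.'' Neither citation provides this. \cite{Guo2002a} treats the periodic box with no boundary, and \cite{Dong2020} is a global-in-time result for data that are \emph{small} in strong, velocity-weighted high-order Sobolev norms; mollification of $f_0$ makes the data smooth but does not make it small in those norms (the derivative norms scale like negative powers of $\ve$), and that paper does not supply a large-data local theory either. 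Moreover, smooth solutions with specular reflection require compatibility conditions at the boundary/grazing set that an even reflection of the data does not by itself guarantee. So the existence of the approximating sequence $f^\ve$ --- the foundation of your argument --- is assumed rather than proved.

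The paper closes exactly this gap by \emph{constructing} the approximate solutions instead of quoting them: it first proves a linear solvability result (Lemma \ref{Lem82}) for the inhomogeneous problem \eqref{154} with given $(h,\psi)$, by adding the vanishing regularization $\ve\<v\>^{30-8|\beta|}\partial^{2\alpha}_{2\beta}f$, working directly with a weak formulation on a Hilbert space that has the specular reflection condition built in (so no boundary compatibility is needed), and invoking linear evolution-equation theory; the $\ve$-terms are then absorbed in the weighted energy estimates and one passes to the weak limit. The nonlinear local solution is then obtained by Picard iteration $f^n\mapsto f^{n+1}$ using this linear lemma, with the contraction estimate for $f^{n+1}-f^n$ playing the role of your Cauchy argument. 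Your uniform-bound and limiting steps would go through essentially as in the paper once an approximating sequence exists, but as written the proposal replaces the hard part by an appeal to results that do not cover this setting; you would need to either prove a large-data local theory with specular reflection yourself or adopt a linearize-regularize-iterate scheme of the above type.
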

	
	We begin with the following linear inhomogeneous problem on the finite channel:
	\begin{equation}\label{154}\left\{
		\begin{split}
			&\partial_tf_\pm + v\cdot \nabla_xf_\pm  \pm \frac{1}{2}\nabla_x\psi\cdot vf_\pm  \mp\nabla_x\psi\cdot\nabla_vf_\pm \\
			&\qquad\qquad\qquad\pm \nabla_x\phi\cdot v\mu^{1/2} - A_\pm f = \Gamma_{\pm}(g,h)+Kh,\\
			&-\Delta_x\phi = \int_{\R^3}(f_+-f_-)\mu^{1/2}\,dv,\\ 
			&f(0,x,v) = f_0(x,v),\quad E(0,x)=E_0(x),\\ 
			&\widehat{f}(t,-1,\bar{k},v_1,\bar{v})|_{v_1>0} = \widehat{f}(t,-1,\bar{k},-v_1,\bar{v}),\\
			&\widehat{f}(t,1,\bar{k},v_1,\bar{v})|_{v_1<0} = \widehat{f}(t,1,\bar{k},-v_1,\bar{v}),\\
			&\partial_{x_1}\phi = 0,\ \text{ on } x_1=\pm 1, 
		\end{split}\right.
	\end{equation}
	for a given $h=h(t,x,v)$ and $\psi=\psi(t,x)$.

	\begin{Lem}\label{Lem82}
		There exist $\varepsilon_0>0$, $T_0>0$ such if 
		\begin{multline}\label{156a}
			\sum_{|\alpha|\le 1}\Big\{\|w\widehat{\partial^\alpha f_0}\|_{L^1_{\bar{k}}L^2_{x_1,v}}+\|w\widehat{\partial^\alpha h}\|_{L^1_{\bar{k}}L^2_{T_0}L^2_{x_1}L^2_{D}}\\
			+\|w\widehat{\partial^\alpha h}\|_{L^1_{\bar{k}}L^\infty_{T_0}L^2_{x_1,v}}+\|\widehat{\partial^\alpha\nabla_x\psi}\|_{L^1_{\bar{k}}L^\infty_{T_0}L^2_{x_1}}\Big\}\le \varepsilon_0,
		\end{multline}
		then the initial boundary value problem \eqref{154} admits a unique weak solution $f=f(t,x,v)$ on $[-1,1]\times \T^2\times\R^3$ satisfying 
		\begin{multline}\label{158a}
			\E_{{T_0}}+\D_{T_0}+\E_{{T_0},w}+\D_{{T_0},w}\\\lesssim  \sum_{|\alpha|\le 1}\big(\|\widehat{w\partial^\alpha f_0}\|_{L^1_{\bar{k}}L^2_{x_1,v}}+\|\widehat{\partial^\alpha E_0}\|_{L^1_{\bar{k}}L^2_{x_1}}\big)+T_0^{1/2}\sum_{|\alpha|\le 1}\|\widehat{\partial^\alpha h}\|_{L^1_{\bar{k}}L^\infty_{T_0}L^2_{x_1,v}}, 
		\end{multline}
		where $\E_{{T_0}}$, $\D_{T_0}$, $\E_{{T_0},w}$ and $\D_{{T_0},w}$ are defined by \eqref{141}, \eqref{142}, \eqref{143} and \eqref{144} respectively. 
		The weak solution $f$ is defined by 
		\begin{align*}
			&
			\int^{T_0}_0(\partial_t\widehat{f_\pm}+v\cdot \widehat{\nabla_xf_\pm} \pm \frac{1}{2}(\nabla_x\psi\cdot v{f_\pm})^\wedge \mp(\nabla_x\psi\cdot\nabla_v{f_\pm})^\wedge
			\pm\widehat{\nabla_x\phi}\cdot v\mu^{1/2},\widehat{g_\pm})_{L^2_{x_1,\bar{k},v}}\,dt\\&\quad - \int^{T_0}_0(\widehat{A_\pm f},\widehat{g_\pm})_{L^2_{x_1,\bar{k},v}}\,dt = \int^{T_0}_0(\widehat{\Gamma_{\pm}(h,f)},\widehat{g_\pm})_{L^2_{x_1,\bar{k},v}}+(\widehat{Kh},\widehat{g_\pm})_{L^2_{x_1,\bar{k},v}}\,dt, 
		\end{align*} for any $g$ belonging to 
		\begin{align*}
			\big\{g=(g_+,g_-)\in L^\infty_{\bar{k}}L^\infty_{T_0} L^2_{x_1,v}:\ &\widehat{g}(t,-1,\bar{k},v_1,\bar{v})|_{v_1>0} = \widehat{g}(t,-1,\bar{k},-v_1,\bar{v}),\\
			&\widehat{g}(t,1,\bar{k},v_1,\bar{v})|_{v_1<0} = \widehat{g}(t,1,\bar{k},-v_1,\bar{v})\big\}.
		\end{align*}
	\end{Lem}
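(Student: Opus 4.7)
The plan is to establish the a priori estimate \eqref{158a} first, then construct the weak solution via a standard approximation, and deduce uniqueness from the linearity of \eqref{154}.

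For the a priori estimate, I would mirror the argument from \eqref{62a}--\eqref{147} in Section \ref{SecFinite}, treating $\psi$ and $h$ as external coefficients rather than as the unknowns. Applying $\pa^\alpha$ for $|\alpha|\le 1$, taking the Fourier transform in $\bar x$, and pairing with $w^2\wh{\pa^\alpha f_\pm}$ on $[-1,1]\times\R^3_v$: the specular reflection \eqref{specular_finite2} yields \eqref{115}--\eqref{115a} for the coefficient field $\psi$ as well, making the $v_1$ boundary flux vanish as in \eqref{135}. The coercivity \eqref{36a} of $A_\pm$ then produces the dissipation $\|\wh{\pa^\alpha f}\|_{L^1_{\bar k}L^2_{T_0}L^2_{x_1}L^2_{D,w}}$. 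The velocity-transport pieces $\na_x\psi\cdot vf_\pm$ and $\na_x\psi\cdot\na_v f_\pm$ are absorbed through Lemma \ref{Lem33a} with $\psi$ in place of $\phi$, gaining a factor $\|\wh{\pa^\alpha \na_x\psi}\|_{L^1_{\bar k}L^\infty_{T_0}L^2_{x_1}}\le\ve_0$. The bilinear source $\Gamma_\pm(h,f)$ is treated by \eqref{35a} with the factor $\|w\wh{\pa^\alpha h}\|_{L^1_{\bar k}L^\infty_{T_0}L^2_{x_1,v}}\le\ve_0$, and $Kh$, being $L^2_v$-bounded, contributes the $T_0^{1/2}\sum_{|\alpha|\le 1}\|\wh{\pa^\alpha h}\|_{L^1_{\bar k}L^\infty_{T_0}L^2_{x_1,v}}$ term after Cauchy--Schwarz in $t\in[0,T_0]$. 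The macroscopic estimates on $(\wh{a_\pm},\wh b,\wh c,\wh E)$ are recovered from Theorem \ref{Thm41f} with the same test functions, since the Poisson relation and the Neumann condition \eqref{Neumann_finite} are unchanged. Choosing $\ve_0$ small enough to reabsorb the $\ve_0$-factored dissipation on the right-hand side delivers \eqref{158a}.

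For existence, I would use a Galerkin truncation in $L^2_{x_1,v}$ based on a countable orthonormal system compatible with the specular reflection at $x_1=\pm 1$ (e.g.\ symmetrized Hermite--Fourier modes). For each $\bar k\in\Z^2$ and each truncation level $N$, the projected system becomes a linear Banach-valued ODE with globally defined coefficients, hence globally solvable. The bound \eqref{158a} is stable under the truncation because every ingredient (coercivity of $A_\pm$, boundedness of $K$, macroscopic test-function computations, specular cancellation) respects the projection. Weak-$*$ compactness in $L^1_{\bar k}L^\infty_{T_0}L^2_{x_1,v}$ together with weak compactness in $L^1_{\bar k}L^2_{T_0}L^2_{x_1}L^2_{D,w}$ extracts a subsequential limit satisfying the weak formulation stated in the lemma; the boundary trace is inherited through the $v_1\pa_{x_1}$-integrability of the limit. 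Uniqueness follows from linearity: the difference of two weak solutions solves \eqref{154} with zero initial data and with $Kh$ replaced by zero, so \eqref{158a} forces it to vanish on $[0,T_0]$.

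The main obstacle is handling the term $\na_x\psi\cdot\na_v f_\pm$, since our norms do not directly control $\na_v f$. This is circumvented exactly as in Lemma \ref{Lem33a}: after integration by parts in $v$, the velocity derivative is traded for weights on $f$, which are then absorbed into $|\cdot|_{L^2_{D,w}}$ via the structural identity for this norm recorded in Section \ref{Sec2} (in the Landau case), or via the interpolation \eqref{217} in the Boltzmann case, the cost being only an $\ve_0$-factor on the dissipation. A secondary issue is ensuring the Galerkin basis preserves the specular symmetry so that the approximate traces converge; this is standard but requires care in the basis construction. Once Lemma \ref{Lem82} is in hand, Theorem \ref{localsolution} follows from an iteration $f^{n+1}$ solving \eqref{154} with $(h,\psi)=(f^n,\phi^n)$: \eqref{158a} propagates \eqref{156a} uniformly in $n$, and on a possibly smaller $T_0$ the $T_0^{1/2}$ factor makes the map contractive, giving a limit that solves \eqref{1} and satisfies \eqref{169}.
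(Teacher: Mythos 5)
Your a priori estimate is essentially the paper's own: with $(\psi,h)$ treated as given small coefficients, one tests with $w^2\wh{\pa^\alpha f_\pm}$, uses the specular cancellation as in \eqref{135}, the coercivity \eqref{36a} of $A_\pm$ (which already gives the \emph{full} $L^2_{D,w}$ dissipation up to a lower-order term absorbed for small $T_0$), Lemma \ref{Lem33a} for the field terms, and Cauchy--Schwarz in $t$ for $Kh$, which is precisely how \eqref{158a} is obtained. Where you genuinely differ is the construction of the approximate solutions. The paper does not use a Galerkin truncation: it mollifies $(f_0,E_0)$, adds the degenerate elliptic regularization $\ve\sum_{|\alpha|+|\beta|\le 3}\big(\<v\>^{30-8|\beta|}\pa^\alpha_\beta f_\pm,\pa^\alpha_\beta g_\pm\big)_{L^2_{x,v}}$ in the weak form \eqref{156} posed on the Hilbert space $\mathcal H$ that already encodes the specular condition, solves the regularized linear problem by abstract evolution theory, derives uniform bounds by choosing $g=f^\ve$ in \eqref{157}, \eqref{157a}, \eqref{163}, \eqref{164} (at the price of the weight--regularization commutator terms, which your route avoids), and then lets $\ve\to 0$. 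The payoff of the paper's route is that $f^\ve$ has three derivatives with polynomial velocity weights, so \emph{all} duality computations --- including the macroscopic estimates of Theorem \ref{Thm41f} and the $\pa_t|\wh{E}|^2$ identity coming from \eqref{98a} and \eqref{2} --- can be carried out on $f^\ve$ before passing to the limit; the payoff of your route is smooth finite-dimensional approximations and no commutator bookkeeping.

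The one claim you should not make as stated is that \eqref{158a} is stable under truncation because the ``macroscopic test-function computations respect the projection.'' The test functions $\wh{\Phi_c},\wh{\Phi_b},\wh{\Phi_a}$ of Theorem \ref{Thm41f} are built from elliptic solutions depending on the unknown and do not lie in a finite-dimensional Galerkin space, so pairing the projected equation with them creates uncontrolled projection errors. This is repairable rather than fatal: for the local estimate Theorem \ref{Thm41f} is not needed, since \eqref{36a} already yields the full $f$-dissipation in \eqref{142}, the $E$-part of $\D_{T_0}$ is bounded by $T_0^{1/2}$ times the $E$-part of $\E_{T_0}$, and the $E$-part of $\E_{T_0}$ follows either from elliptic regularity for the Poisson equation or from the $\pa_t|\wh E|^2$ identity, provided your basis contains the low-order velocity moments $\mu^{1/2},v\mu^{1/2}$ (a Hermite basis does) so that the continuity equation survives the projection; alternatively, perform the macroscopic estimates only on the limit, at the cost of justifying the non-projected test functions for a mere weak solution. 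With that correction, and with a basis genuinely compatible with specular reflection (e.g.\ via even reflection extension in $x_1$, which you rightly flag), your scheme yields the same lemma; your uniqueness argument is at the same level of rigor as the paper's, since testing a weak solution against itself requires justification in either approach.
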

	\begin{proof}
		Let $\eta_v$ and $\eta_x$ be the standard mollifier in $\R^3$ and $[-1,1]\times\T^2$: $\eta_v,\eta_x\in C^\infty_c$, $0\le \eta_v,\eta_x\le 1$, $\int\zeta_vdv=\int\zeta_xdx=1$. For $\varepsilon>0$, let $\eta^\varepsilon_v(v) = \ve^{-3}\zeta_v{(\ve^{-1}v)}$ and $\eta_x^\ve(x)=\ve^{-3}\zeta_x{(\ve^{-1}x)}$. Then we mollify the initial data as $f_0^\ve=f_0*\eta^\ve_v*\eta^\ve_x$, $E^\ve_0=E_0*\eta_x^\ve$. Note that $f_0^\ve$, $E^\ve_0$ are still periodic with respect to $\bar{x}\in\T^2$. 
		Since $\int^{1}_{-1}|\widehat{\eta_x^\ve}|\,dx_1\le\int^{1}_{-1}\int_{\T^2}\eta_x(x)\,d\bar{x}dx_1 = 1$ and $|\widehat{\eta_v^\ve}|\le \int\eta_v\,dv=1$, we have 
		\begin{align*}
			\|\widehat{f_0^\ve}\|_{L^1_{\bar{k}}L^2_{x_1,v}}\le \|(f_0*\eta^\ve_v*\eta^\ve_x)^\wedge\|_{L^1_{\bar{k}}L^2_{x_1,v}}\lesssim \|\eta_v\|_{L^1_{v}}\|\widehat{\eta_x^\ve}\|_{L^1_{x_1}}\|\widehat{f_0}\|_{L^1_{\bar{k}}L^2_{x_1,v}}\le \|\widehat{f_0}\|_{L^1_{\bar{k}}L^2_{x_1,v}}, 
		\end{align*}
		and similarly,
		\begin{align*}
			\|\widehat{\nabla_xf_0^\ve}\|_{L^1_{\bar{k}}L^2_{x_1,v}}&\le \|\widehat{\nabla_x f_0}\|_{L^1_{\bar{k}}L^2_{x_1,v}},\\ 
			\|\widehat{E_0^\ve}\|_{L^1_{\bar{k}}L^2_{x_1}}&\le \|\widehat{ E_0}\|_{L^1_{\bar{k}}L^2_{x_1}},\\
			\|\widehat{\nabla_xE_0^\ve}\|_{L^1_{\bar{k}}L^2_{x_1}}&\le \|\widehat{\nabla_x E_0}\|_{L^1_{\bar{k}}L^2_{x_1}}.
		\end{align*}
		We would like to add the vanishing term $\ve\<v\>^{30-8|\beta|}\partial^{2\alpha}_{2\beta}f$ to the first equation of \eqref{154}. Here we pick $30$ as merely a large constant. But since there's boundary on $x_1$, in order to eliminate the boundary effect, we directly consider the weak form of the solution. For any $g_\pm$, we consider
		\begin{multline}\label{156}
			(\partial_tf_\pm,g_\pm)_{L^2_{x,v}} +\ve\sum_{|\alpha|+|\beta|\le 3}(\<v\>^{30-8|\beta|}\partial^\alpha_\beta f_\pm,\partial^\alpha_\beta g_\pm)_{L^2_{x,v}}+(v\cdot \nabla_xf_\pm,g_\pm)_{L^2_{x,v}}\\   \pm \Big(\frac{1}{2}\nabla_x\psi\cdot vf_\pm ,g_\pm\Big)_{L^2_{x,v}} \mp\big(\nabla_x\psi\cdot\nabla_vf_\pm,g_\pm\big)_{L^2_{x,v}}  \pm \big(\nabla_x\phi\cdot v\mu^{1/2},g_\pm\big)_{L^2_{x,v}}\\ - (A_\pm f,g_\pm)_{L^2_{x,v}} = (\Gamma_{\pm}(h,f),g_\pm)_{L^2_{x,v}}+(Kh,g_\pm)_{L^2_{x,v}}. 
		\end{multline}
		Denote \eqref{156} by 
		\begin{equation*}			(\partial_tf_\pm,g_\pm)_{L^2_{x,v}}+ \mathbf{B}_\pm[f,g]=(Kh,g_\pm)_{L^2_{x,v}}.
		\end{equation*} 
		Then $\mathbf{B}=(\mathbf{B}_+,\mathbf{B}_-)$ is a bilinear operator on $\mathcal{H}\times\mathcal{H}$, where 
		\begin{align*}
			\mathcal{H} = \big\{f=(f_+,f_-)\in  L^2_{x,v}:\ &\<v\>^{15-4|\beta|}\partial^\alpha_\beta f\in L^2_{x,v},\ \forall\, |\alpha|+|\beta|\le 3,\\&\widehat{f}(t,-1,\bar{k},v_1,\bar{v})|_{v_1>0} = \widehat{f}(t,-1,\bar{k},-v_1,\bar{v}),\\
			&\widehat{f}(t,1,\bar{k},v_1,\bar{v})|_{v_1<0} = \widehat{f}(t,1,\bar{k},-v_1,\bar{v})\big\}.
		\end{align*} 
		Note that the terms involving both $\psi$ and $h$ can be controlled as 
		\begin{align*}
			&\quad(\pm \frac{1}{2}\nabla_x\psi\cdot vf_\pm ,g_\pm)_{L^2_{x,v}}+(\mp\nabla_x\psi\cdot\nabla_vf_\pm,g_\pm)_{L^2_{x,v}}+(\Gamma_{\pm}(h,f),g_\pm)_{L^2_{x,v}}\\
			&\lesssim \|\nabla_x\psi\|_{L^\infty_x}\|\<v\>^{1/2}f\|_{L^2_{x,v}}\|\<v\>^{1/2}g\|_{L^2_{x,v}} + \|\nabla_x\psi\|_{L^\infty_x}\|\<v\>^{\gamma/2}\nabla_vf\|_{L^2_{x,v}}\|\<v\>^{-\gamma/2}g\|_{L^2_{x,v}} \\&\qquad\qquad\qquad\qquad\qquad\qquad\qquad\qquad\quad+ \|h\|_{L^\infty_xL^2_v}\|f\|_{L^2_{x}L^2_D}\|g\|_{L^2_{x}L^2_D}\\
			&\lesssim  \|\widehat{\nabla_x\psi}\|_{L^1_{\bar{k}}L^2_{x_1}}\|\<v\>^{1/2}f\|_{L^2_{x,v}}\|\<v\>^{1/2}g\|_{L^2_{x,v}} + \|\widehat{\nabla_x\psi}\|_{L^1_{\bar{k}}L^2_{x_1}}\|\<v\>^{\gamma/2}\nabla_vf\|_{L^2_{x,v}}\|\<v\>^{-\gamma/2}g\|_{L^2_{x,v}} \\&\qquad\qquad\qquad\qquad\qquad\qquad\qquad\qquad\quad + \|\widehat{h}\|_{L^1_{\bar{k}}L^2_{x_1}L^2_v}\|f\|_{L^2_{x}L^2_D}\|g\|_{L^2_{x}L^2_D}.
		\end{align*} 
		Then using \eqref{156a} to control the upper bound of $\|\widehat{\nabla_x\psi}\|_{L^1_{\bar{k}}L^2_{x_1}}$ and $\|\widehat{h}\|_{L^1_{\bar{k}}L^2_{x_1}L^2_v}$, it's direct to obtain from \eqref{36a} that 
		\begin{multline*}
			\int^T_0\sum_\pm\mathbf{B}_\pm[f,f]\,dt\ge \ve\sum_{|\alpha|+|\beta|\le 3}\int^T_0\|\<v\>^{15-4|\beta|}\partial^\alpha_\beta f\|^2_{L^2_{x,v}}\,dt\\ + \lambda\int^T_0\|f\|_{L^2_xL^2_{D}}^2\,dt-CT\sup_{0\le t\le T}\|f\|_{L^2_xL^2_v}^2 ,
		\end{multline*}
	for some $\lambda>0$. 
		Notice that the boundary term generating from $(v\cdot \nabla_xf_\pm,f_\pm)_{L^2_{x,v}}$ vanishes by using the same argument as \eqref{135}. 
		Also, $f_0^\ve$ is smooth and $K$ is a linear bounded operator $L^2_{x,v}$.
		Then by the standard theory of linear evolution equations on Hilbert space $\mathcal{H}$, there exists $T_0>0$ and unique solution $f^\ve\in\mathcal{H}$ to equation \eqref{156} and \eqref{154}$_2$-\eqref{154}$_6$ on time $[0,T_0]$, which is smooth on $(t,\bar{x},v)$, where $\phi$ is solved by \eqref{154}$_2$ and $\eqref{154}_6$. Then $f^\ve$ solves
		\begin{align}\label{159}\notag
			&
			\int^{T_0}_0\Big((\partial_tf_\pm^{\ve},g_\pm)_{L^2_{x,v}} +\ve\sum_{|\alpha|+|\beta|\le 3}(\<v\>^{30-8|\beta|}\partial^\alpha_\beta f_\pm^{\ve},\partial^\alpha_\beta g_\pm)_{L^2_{x,v}}\Big)dt\\&\notag+\int^{T_0}_0(v\cdot \nabla_xf_\pm^{\ve}   \pm \frac{1}{2}\nabla_x\psi\cdot vf_\pm^{\ve} \mp\nabla_x\psi\cdot\nabla_vf_\pm^{\ve}  \pm \nabla_x\phi^\ve\cdot v\mu^{1/2} - A_\pm f^\ve,g_\pm)_{L^2_{x,v}}\,dt\\ & = \int^{T_0}_0(\Gamma_{\pm}(h,f^\ve),g_\pm)_{L^2_{x,v}}+(Kh,g_\pm)_{L^2_{x,v}}\,dt,  
		\end{align}for any $g\in \mathcal{H}$. 
		Using identity $(\cdot,\cdot)_{L^2_{\bar{x}}}=(\widehat{\cdot},\widehat{\cdot})_{L^2_{\bar{k}}}$ with Fourier transform $\widehat{\cdot}$ over $\bar{x}\in\T^2$, we have 
		\begin{align}\notag\label{157}
			&
			\int^{T_0}_0(\partial_t\widehat{f_\pm^{\ve}},\widehat{g_\pm})_{L^2_{x_1,v}}\,dt +\ve\sum_{|\alpha|+|\beta|\le 3}\int^{T_0}_0(\<v\>^{30-8|\beta|}\widehat{\partial^{\alpha}_{\beta} f_\pm^{\ve}},\widehat{\partial^{\alpha}_\beta g_\pm})_{L^2_{x_1,v}}dt\\
			&\notag+\int^{T_0}_0(v\cdot \widehat{\nabla_xf_\pm^{\ve}} \pm \frac{1}{2}(\nabla_x\psi\cdot v{f_\pm^{\ve}})^\wedge \mp(\nabla_x\psi\cdot\nabla_v{f_\pm^{\ve}})^\wedge
			\pm\widehat{\nabla_x\phi^\ve}\cdot v\mu^{1/2} - \widehat{A_\pm f^\ve},\widehat{g_\pm})_{L^2_{x_1,v}}\,dt\\& = \int^{T_0}_0(\widehat{\Gamma_{\pm}(h,f^\ve)},\widehat{g_\pm})_{L^2_{x_1,v}}+(\widehat{Kh},\widehat{g_\pm})_{L^2_{x_1,v}}\,dt, 
		\end{align}
		and with weight $w$ involved, we have 
		\begin{align}\notag\label{157a}
			&\int^{T_0}_0(\partial_t\widehat{wf_\pm^{\ve}},\widehat{wg_\pm})_{L^2_{x_1,v}}\,dt +\ve\sum_{|\alpha|+|\beta|\le 3}\int^{T_0}_0(\<v\>^{30-8|\beta|}\widehat{w\partial^{\alpha}_{\beta} f_\pm^{\ve}},\widehat{w\partial^{\alpha}_\beta g_\pm})_{L^2_{x_1,v}}\,dt\\
			&\notag+\ve\int^{T_0}_0\sum_{|\alpha|+|\beta|\le 3}\big(\<v\>^{30-8|\beta|}\widehat{\partial^{\alpha}_{\beta} f_\pm^{\ve}},\sum_{\substack{\beta'<\beta}}(\partial_{\beta-\beta'}w^2{\partial^{\alpha}_{\beta'} g_\pm})^\wedge\big)_{L^2_{x_1,v}}\,dt\\
			&\notag+\int^{T_0}_0(v\cdot \widehat{w\nabla_xf_\pm^{\ve}} \pm \frac{1}{2}(\nabla_x\psi\cdot v{wf_\pm^{\ve}})^\wedge \mp(\nabla_x\psi\cdot\nabla_v{wf_\pm^{\ve}})^\wedge \pm\widehat{w\nabla_x\phi^\ve}\cdot v\mu^{1/2},\widehat{wg_\pm})_{L^2_{x_1,v}}\,dt \\&- \int^{T_0}_0(\widehat{wA_\pm f^\ve},\widehat{wg_\pm})_{L^2_{x_1,v}}\,dt = \int^{T_0}_0({(w\Gamma_{\pm}(h,f^\ve))^\wedge}+\widehat{wKh},\widehat{wg_\pm})_{L^2_{x_1,v}}\,dt, 
		\end{align}for any $g\in \mathcal{H}$, 
		where $\alpha=(\alpha_1,\alpha_2,\alpha_3)$. Note that after Fourier transform on $\T^2$, we take only inner product $L^2_{x_1,v}$.
		
		Next we derive the identities on derivative. Let $|\alpha_1|= 1$. We consider the equation 
		\begin{align*}
			\partial_tf^{\alpha_1}_\pm + v\cdot \nabla_xf^{\alpha_1}_\pm  \pm \frac{1}{2}\nabla_x\psi\cdot vf^{\alpha_1}_\pm  \mp\nabla_x\psi\cdot\nabla_vf^{\alpha_1}_\pm \mp E^\alpha\cdot v\mu^{1/2} - A_\pm \partial^{\alpha_1} f\\ = \Gamma_{\pm}(\partial^{\alpha_1} h,f)+\Gamma_{\pm}(h,f^{\alpha_1} )\mp \frac{1}{2}\partial^{\alpha_1} \nabla_x\psi\cdot vf_\pm  \pm\partial^{\alpha_1} \nabla_x\psi\cdot\nabla_vf_\pm + K\partial^{\alpha_1} h,
		\end{align*}
		with initial data $f^{\alpha_1}(0,x,v)=\partial^{\alpha_1} f_0(x,v)$, $E^{\alpha}(0,x)=\partial^{\alpha}E_0(x)$ and \eqref{154}$_3$-\eqref{154}$_6$. Using the same argument we used to derive \eqref{159}, there exists $f^{\ve,\alpha_1}\in\mathcal{H}$ such that 
		\begin{align}\label{162}\notag
			&\int^{T_0}_0(\partial_tf_\pm^{\ve,\alpha_1},g_\pm)_{L^2_{x,v}}\,dt +\ve\sum_{|\alpha|+|\beta|\le 3}\int^{T_0}_0(\<v\>^{30-8|\beta|}\partial^\alpha_\beta f_\pm^{\ve,\alpha_1},\partial^\alpha_\beta g_\pm)_{L^2_{x,v}}\,dt\\&\notag+\int^{T_0}_0(v\cdot \nabla_xf_\pm^{\ve,\alpha_1}   \pm \frac{1}{2}\nabla_x\psi\cdot vf_\pm^{\ve,\alpha_1}\mp\nabla_x\psi\cdot\nabla_vf_\pm^{\ve,\alpha_1}  \mp E^{\ve,\alpha_1}\cdot v\mu^{1/2},g_\pm)_{L^2_{x,v}}\,dt \\&\notag = \int^{T_0}_0( A_\pm f^{\ve,\alpha_1}+\Gamma_{\pm}(\partial^{\alpha_1} h,f^\ve)+\Gamma_{\pm}(h,f^{\ve,\alpha_1} ),g_\pm)_{L^2_{x,v}}\,dt\\&
			+\int^{T_0}_0(\mp \frac{1}{2}\partial^{\alpha_1} \nabla_x\psi\cdot vf^\ve_\pm  \pm\partial^{\alpha_1} \nabla_x\psi\cdot\nabla_vf^\ve_\pm+K\partial^{\alpha_1} h,g_\pm)_{L^2_{x,v}}\,dt,
		\end{align}for $g\in\mathcal{H}$. Then $f^{\ve,\alpha_1}=\partial^{\alpha_1} f^\ve$, $E^{\ve,\alpha_1}=\partial^{\alpha_1} E^\ve$ in the weak sense by using \eqref{159} and \eqref{162}. 
		Then similar to \eqref{157} and \eqref{157a}, we have 
		\begin{align}\notag\label{163}
			&
			\int^{T_0}_0(\partial_t\widehat{\partial^{\alpha_1} f_\pm^{\ve}},\widehat{g_\pm})_{L^2_{x_1,v}}\,dt +\ve\sum_{|\alpha|+|\beta|\le 3}\int^{T_0}_0(\<v\>^{30-8|\beta|}\widehat{\partial^{\alpha}_{\beta} \partial^{\alpha_1} f_\pm^{\ve}},\widehat{\partial^{\alpha}_\beta g_\pm})_{L^2_{x_1,v}}dt\\
			&\notag+\int^{T_0}_0(v\cdot \widehat{\nabla_x\partial^{\alpha_1} f_\pm^{\ve}} \pm \frac{1}{2}(\nabla_x\psi\cdot v{\partial^{\alpha_1} f_\pm^{\ve}})^\wedge \mp(\nabla_x\psi\cdot\nabla_v{\partial^{\alpha_1} f_\pm^{\ve}})^\wedge,\widehat{g_\pm})_{L^2_{x_1,v}}\,dt\\
			&\notag+\int^{T_0}_0(
			\pm\widehat{\partial^\alpha\nabla_x\phi^\ve}\cdot v\mu^{1/2}-\widehat{A_\pm \partial^{\alpha_1} f^{\ve}},\widehat{g_\pm})_{L^2_{x_1,v}}\,dt
			 =\notag \int^{T_0}_0((\Gamma_{\pm}(\partial^{\alpha_1} h,f^\ve))^\wedge,\widehat{g_\pm})_{L^2_{x_1,v}}\,dt\\
			&\notag+ \int^{T_0}_0((\Gamma_{\pm}(h,f^{\ve,\alpha_1} ))^\wedge
			\mp \frac{1}{2}(\partial^{\alpha_1} \nabla_x\psi\cdot vf^\ve_\pm)^\wedge ,\widehat{g_\pm})_{L^2_{x_1,v}}\,dt\\& +\int^{T_0}_0(\pm(\partial^{\alpha_1} \nabla_x\psi\cdot\nabla_vf^\ve_\pm)^\wedge,\widehat{g_\pm})_{L^2_{x_1,v}}\,dt+\int^{T_0}_0(K\widehat{\partial^{\alpha_1} h},\widehat{g_\pm})_{L^2_{x_1,v}}\,dt, 
		\end{align}
		and the weighted form is 
		\begin{align}\notag\label{164}
			&\int^{T_0}_0(\partial_t\widehat{w\partial^{\alpha_1} f_\pm^{\ve}},\widehat{wg_\pm})_{L^2_{x_1,v}}\,dt +\ve\sum_{|\alpha|+|\beta|\le 3}\int^{T_0}_0(\<v\>^{30-8|\beta|}\widehat{w\partial^{\alpha}_{\beta} \partial^{\alpha_1} f_\pm^{\ve}},\widehat{w\partial^{\alpha}_\beta g_\pm})_{L^2_{x_1,v}}\,dt\\
			&\notag+\ve\int^{T_0}_0\sum_{|\alpha|+|\beta|\le 3}\big(\<v\>^{30-8|\beta|}\widehat{\partial^{\alpha}_{\beta} \partial^{\alpha_1} f_\pm^{\ve}},\sum_{\substack{\beta'<\beta}}(\partial_{\beta-\beta'}w^2{\partial^{\alpha}_{\beta'} g_\pm})^\wedge\big)_{L^2_{x_1,v}}\,dt\\
			&\notag+\int^{T_0}_0(v\cdot ({w\nabla_x\partial^{\alpha_1} f_\pm^{\ve}})^\wedge \pm \frac{1}{2}(\nabla_x\psi\cdot v{w\partial^{\alpha_1} f_\pm^{\ve}})^\wedge \mp(\nabla_x\psi\cdot\nabla_v{w\partial^{\alpha_1} f_\pm^{\ve}})^\wedge,\widehat{wg_\pm})_{L^2_{x_1,v}}\,dt\\
			&\notag+\int^{T_0}_0(\pm\widehat{w\partial^{\alpha_1} \nabla_x\phi^\ve}\cdot v\mu^{1/2}-w\widehat{A_\pm \partial^{\alpha_1} f^{\ve}},\widehat{wg_\pm})_{L^2_{x_1,v}}\,dt \\
			&=\notag \int^{T_0}_0((\Gamma_{\pm}(\partial^{\alpha_1} h,f^\ve))^\wedge,\widehat{g_\pm})_{L^2_{x_1,v}}\,dt+\int^{T_0}_0((\Gamma_{\pm}(h,\partial^{\alpha_1} f^{\ve} ))^\wedge,\widehat{g_\pm})_{L^2_{x_1,v}}\,dt
			\\&\notag+\int^{T_0}_0(\mp \frac{1}{2}(w\partial^{\alpha_1} \nabla_x\psi\cdot vf^\ve_\pm)^\wedge  \pm(w\partial^{\alpha_1} \nabla_x\psi\cdot\nabla_vf^\ve_\pm)^\wedge,\widehat{wg_\pm})_{L^2_{x_1,v}}\,dt\\&+\int^{T_0}_0(\widehat{wK\partial^{\alpha_1} h},\widehat{wg_\pm})_{L^2_{x_1,v}}\,dt. 
		\end{align}
		Following the arguments from \eqref{146} to \eqref{147}, choosing $g = f$ in \eqref{157}, \eqref{157a}, \eqref{163} and \eqref{164}, we can obtain the similar energy estimate to \eqref{147}. It suffices to deal with the third terms in \eqref{157a} and \eqref{164}. Noticing $|\partial^\alpha_\beta w^2|\le \<v\>^{2|\beta|}w^2$, we have  
		\begin{align*}
			&\ve\int_{\Z^2}\Big(\int^{T_0}_0\Big|\sum_{|\alpha|+|\beta|\le 3}\big(\<v\>^{30-8|\beta|}\widehat{\partial^{\alpha}_{\beta}\pa f_\pm},\sum_{\substack{\beta'<\beta}}(\partial_{\beta-\beta'}w^2{\partial^{\alpha}_{\beta'}\pa f_\pm})^\wedge\big)_{L^2_{x_1,v}}\Big|\,dt\Big)^{1/2}\,d\Sigma(\bar{k})\\
			&= \ve\sum_{|\alpha|+|\beta|\le 3}\|\<v\>^{15-4|\beta|}w\widehat{\partial^{\alpha}_{\beta}\pa f_\pm}\|_{L^1_{\bar{k}}L^2_{T_0}L^2_{x_1,v}}\sum_{|\alpha'|+|\beta'|\le 2}\|\<v\>^{15-4|\beta'|-2}w\widehat{\partial^{\alpha'}_{\beta'}\pa f_\pm}\|_{L^1_{\bar{k}}L^2_{T_0}L^2_{x_1,v}}\\
			&\lesssim \frac{\ve}{2}\sum_{|\alpha|+|\beta|\le 3}\|\<v\>^{15-4|\beta|}w\widehat{\partial^{\alpha}_{\beta}\pa f_\pm}\|_{L^1_{\bar{k}}L^2_{T_0}L^2_{x_1,v}}^2+\ve C\sum_{|\alpha|+|\beta|\le 2}\|\<v\>^{15-4|\beta|}\widehat{\partial^{\alpha}_{\beta}\pa f_\pm}\|^2_{L^1_{\bar{k}}L^2_{T_0}L^2_{x_1,v}},
		\end{align*}
		for some $C>0$. 
		The first and second right-hand terms will be absorbed by the second left-hand terms of \eqref{164} and \eqref{157} respectively. 
		Therefore, we take the linear combination $\eqref{157}+\kappa\times\eqref{157a}+\sum_{|\alpha_1|\le 1}\big(\eqref{163}+\kappa\times\eqref{164}\big)$ with $g=f$ and $\kappa$ suitably small. Following the arguments from \eqref{146} to \eqref{147} and taking summation over $\pm$, square root and summation on $\bar{k}\in \Z^2$ of the resultant estimate, we have the following estimate (Note that we use \eqref{36a} to estimate $A_\pm$ while $K$ is linear bounded on $L^2_{x,v}$):
		\begin{align*}
			&\quad\,\E_{{T_0},w}(f^\ve)+\D_{{T_0},w}(f^\ve)+\E_{{T_0}}(f^\ve)+\D_{T_0}(f^\ve)\\&\qquad+\ve\sum_{|\alpha_1|\le 1}\sum_{|\alpha|+|\beta|\le 3}\int_{\Z^2}\Big(\int^{T_0}_0\|\<v\>^{15-4|\beta|}({w\partial^{\alpha}_{\beta} \partial^{\alpha_1} f_\pm^{\ve}})^\wedge\|^2_{L^2_{x_1,v}}\,dt\Big)^{1/2}\,d\Sigma(\bar{k})\\
			&\lesssim \sum_{|\alpha_1|\le 1}\Big(\|\widehat{\partial^{\alpha_1}\nabla_x\psi}\|_{L^1_{\bar{k}}L^\infty_{T_0}L^2_{x_1}}+\|\widehat{\partial^{\alpha_1} h}\|_{L^1_{\bar{k}}L^2_{T_0}L^2_{x_1}L^2_{D}}+\|\widehat{\partial^{\alpha_1} h}\|_{L^1_{\bar{k}}L^\infty_{T_0}L^2_{x_1,v}}\Big)\\&\qquad\qquad\qquad\qquad\qquad\times\Big(\E_{{T_0},w}(f^\ve)+\D_{{T_0},w}(f^\ve)+\E_{{T_0}}(f^\ve)+\D_{T_0}(f^\ve)\Big)\\
			&\qquad +\sum_{|\alpha_1|\le 1}\big(\|\widehat{w\partial^{\alpha_1} f_0}\|^2_{L^1_{\bar{k}}L^2_{x_1,v}}+\|\widehat{\partial^{\alpha_1} E_0}\|^2_{L^1_{\bar{k}}L^2_{x_1}}\big)\\&\qquad+T_0^{1/2}\sum_{|\alpha_1|\le 1}\|\widehat{\partial^{\alpha_1} f}\|_{L^1_{\bar{k}}L^\infty_{T_0}L^2_{x_1,v}}+T_0^{1/2}\sum_{|\alpha_1|\le 1}\|\widehat{\partial^{\alpha_1} h}\|_{L^1_{\bar{k}}L^\infty_{T_0}L^2_{x_1,v}},
		\end{align*}where we used definition \eqref{141}, \eqref{142}, \eqref{143} and \eqref{144} and write $f^\ve$ to illustrate the dependence on $f^\ve$. Choosing $\ve_0$ in \eqref{156a} and $T_0>0$ sufficiently small, we have 
		\begin{align}\label{165}\notag
			&\quad\,\E_{{T_0},w}(f^\ve)+\D_{{T_0},w}(f^\ve)+\E_{{T_0}}(f^\ve)+\D_{T_0}(f^\ve)\\
			\notag&\quad+\ve\sum_{|\alpha|+|\beta|\le 3}\int_{\Z^2}\Big(\int^{T_0}_0\|\<v\>^{15-4|\beta|}({w\partial^{\alpha}_{\beta} \partial^{\alpha_1} f_\pm^{\ve}})^\wedge\|^2_{L^2_{x_1,v}}\,dt\Big)^{1/2}\,d\Sigma(\bar{k})
			\\&\le \sum_{|\alpha|\le 1}\big(\|\widehat{w\partial^{\alpha_1} f_0}\|^2_{L^1_{\bar{k}}L^2_{x_1,v}}+\|\widehat{\partial^{\alpha_1} E_0}\|^2_{L^1_{\bar{k}}L^2_{x_1}}\big)+T_0^{1/2}\sum_{|\alpha|\le 1}\|\widehat{\partial^{\alpha_1} h}\|_{L^1_{\bar{k}}L^\infty_{T_0}L^2_{x_1,v}}. 
		\end{align}
		Thus, $f^\ve$ is uniformly bounded with respect to norms:
		\begin{multline*}
			\sum_{|\alpha_1|\le 1}\big(\|\widehat{w\partial^{\alpha_1} f}\|^2_{L^1_{\bar{k}}L^\infty_TL^2_{x_1,v}}+\sqrt{qN}\|{\<v\>^{\frac{\vt}{2}}}{(1+t)^{-\frac{N+1}{2}}}w\widehat{\partial^{\alpha_1}{h}}\|_{L^1_kL^2_TL^2_v}+ \|\widehat{\partial^{\alpha_1} f}\|^2_{L^1_{\bar{k}}L^2_TL^2_{x_1}L^2_{D,w}}\\
			+\|\widehat{\partial^{\alpha_1} f}\|_{L^1_{\bar{k}}L^\infty_TL^2_{x_1,v}} 
			+\|\wh{\partial^{\alpha_1} E}\|_{L^1_{\bar{k}}L^\infty_TL^2_{x_1}}+\|\widehat{\partial^{\alpha_1} f}\|_{L^1_{\bar{k}}L^2_TL^2_{x_1}L^2_{D}}
			+\|\widehat{\partial^{\alpha_1} E}\|_{L^1_{\bar{k}}L^2_TL^2_{x_1}}\big).
		\end{multline*}
		Denote the weak limit of $\{f^\ve\}$ as $\ve\to0$ to be $f$. Taking limit $\ve\to 0$ in \eqref{157}, then $f$ solves 
		\begin{align*}
			&
			\int^{T_0}_0(\partial_t\widehat{f_\pm}+v\cdot \widehat{\nabla_xf_\pm} \pm \frac{1}{2}(\nabla_x\psi\cdot v{f_\pm})^\wedge \mp(\nabla_x\psi\cdot\nabla_v{f_\pm})^\wedge
			\pm\widehat{\nabla_x\phi}\cdot v\mu^{1/2},\widehat{g_\pm})_{L^2_{x_1,\bar{k},v}}\,dt\\&\quad - \int^{T_0}_0(\widehat{A_\pm f},\widehat{g_\pm})_{L^2_{x_1,\bar{k},v}}\,dt = \int^{T_0}_0(\widehat{\Gamma_{\pm}(h,f)},\widehat{g_\pm})_{L^2_{x_1,\bar{k},v}}+(\widehat{Kh},\widehat{g_\pm})_{L^2_{x_1,\bar{k},v}}\,dt, 
		\end{align*}with initial data $f(0)=f_0$, for any $g$ belonging to 
		\begin{align*}
			\big\{g=(g_+,g_-)\in L^\infty_{\bar{k}}L^\infty_{T_0} L^2_{x_1,v}:\,&\widehat{g}(t,-1,\bar{k},v_1,\bar{v})|_{v_1>0} = \widehat{g}(t,-1,\bar{k},-v_1,\bar{v}),\\
			&\widehat{g}(t,1,\bar{k},v_1,\bar{v})|_{v_1<0} = \widehat{g}(t,1,\bar{k},-v_1,\bar{v})\big\}.
		\end{align*}
		Taking limit $\ve\to 0$ in \eqref{165}, we obtain \eqref{158a}. This completes the proof of Lemma \ref{Lem82}. 
		\qe\end{proof}
	
	\begin{proof}[Proof of Theorem \ref{localsolution}]Write $(f_0^\ve,E_0^\ve)$ to be the mollification of $(f_0,E_0)$.
		We now construct the approximation solution sequence which is denoted by \begin{align*}
			\{(f^n(t,x,v),\phi^n(t,x))\}^\infty_{n=0}
		\end{align*} using the following iterative scheme:
		\begin{equation*}\left\{
			\begin{split}
				&\partial_tf^{n+1}_\pm + v\cdot \nabla_xf^{n+1}_\pm  \pm \frac{1}{2}\nabla_x\phi^n\cdot vf^{n+1}_\pm  \mp\nabla_x\phi^n\cdot\nabla_vf^{n+1}_\pm \\&\qquad\qquad\pm \nabla_x\phi^{n+1}\cdot v\mu^{1/2} - A_\pm f^{n+1} = \Gamma_{\pm}(f^n,f^{n+1})+Kf^n,\\
				&-\Delta_x\phi^{n+1} = \int^{1}_{-1}\int_{\T^2}(f^{n+1}_+-f^{n+1}_-)\mu^{1/2}\,dx,\\ 
				&f^{n+1}(0,x,v) = f^{\frac{1}{n+1}}_0(x,v),\quad E^{n+1}(0,x)=E^{\frac{1}{n+1}}_0(x),\\ 
				&\widehat{f^{n+1}}(t,-1,\bar{k},v_1,\bar{v})|_{v_1>0} = \widehat{f^{n+1}}(t,-1,\bar{k},-v_1,\bar{v}),\\
				&\widehat{f^{n+1}}(t,1,\bar{k},v_1,\bar{v})|_{v_1<0} = \widehat{f^{n+1}}(t,1,\bar{k},-v_1,\bar{v}),\\
				&\partial_{x_1}\phi^{n+1} = 0,\ \text{ on } x_1=\pm 1, 
			\end{split}\right.
		\end{equation*}
		for $n=0,1,2,\cdots$, where we set $f^0(t,x,v)=f_0(x,v)$. 
		With Lemma \ref{Lem82}, it is a standard procedure to apply the induction argument to show that there exists $\ve_0>0$ and $T_0>0$ such that if 
		\begin{align*}
			\sum_{|\alpha|\le 1}\big(\|\widehat{w\partial^\alpha f_0}\|^2_{L^1_{\bar{k}}L^2_{x_1,v}}+\|\widehat{\partial^\alpha E_0}\|^2_{L^1_{\bar{k}}L^2_{x_1}}\big)\le \ve_0,
		\end{align*}
		then the approximate solution sequence $\{f^n\}$ is well-defined in the sense of the following norms are finite:
		\begin{multline}\label{168}
			\sum_{|\alpha|\le 1}\big(\|\widehat{w\partial^{\alpha} f}\|^2_{L^1_{\bar{k}}L^\infty_TL^2_{x_1,v}}+\sqrt{qN}\|{\<v\>^{\frac{\vt}{2}}}{(1+t)^{-\frac{N+1}{2}}}w\widehat{\partial^{\alpha}{h}}\|_{L^1_kL^2_TL^2_v}+ \|\widehat{\partial^{\alpha} f}\|^2_{L^1_{\bar{k}}L^2_TL^2_{x_1}L^2_{D,w}}\\
			+\|\widehat{\partial^{\alpha} f}\|_{L^1_{\bar{k}}L^\infty_TL^2_{x_1,v}} 
			+\|\wh{\partial^{\alpha} E}\|_{L^1_{\bar{k}}L^\infty_TL^2_{x_1}}+\|\widehat{\partial^{\alpha} f}\|_{L^1_{\bar{k}}L^2_TL^2_{x_1}L^2_{D}}
			+\|\widehat{\partial^{\alpha} E}\|_{L^1_{\bar{k}}L^2_TL^2_{x_1}}\big).
		\end{multline}
		Notice that $f^{n+1}-f^n$ solves 
		\begin{multline}\label{715}
			\partial_t(f^{n+1}_\pm-f^n_\pm) + v\cdot \nabla_x(f^{n+1}_\pm-f^n_\pm)  \pm \frac{1}{2}\nabla_x\phi^n\cdot v(f^{n+1}_\pm -f^n_\pm) \pm \frac{1}{2}(\nabla_x\phi^n-\nabla_x\phi^{n-1})\cdot vf^n_\pm\\
			 \mp\nabla_x\phi^n\cdot\nabla_v(f^{n+1}_\pm -f^n_\pm)
			\mp  (\nabla_x\phi^n-\nabla_x\phi^{n-1})\cdot\nabla_vf^n_\pm
			\pm (\nabla_x\phi^{n+1}-\nabla_x\phi^n)\cdot v\mu^{1/2}\\
			- A_\pm (f^{n+1}-f^n) = \Gamma_{\pm}(f^n,f^{n+1}-f^n)+\Gamma_{\pm}(f^n-f^{n-1},f^n)+K(f^n-f^{n-1}),
		\end{multline}
		for $n=1,2,3,\cdots$. Using the method for deriving \eqref{158a}, we know that $(f^{n+1}-f^n,E^{n+1}-E^n)$ is Cauchy sequence with respect to norms in \eqref{168}
		Then the limit function $f(t,x,v)$ is indeed a solution to \eqref{1}, \eqref{specular_finite} and \eqref{Neumann_finite} satisfying estimate \eqref{169}. 
		For the positivity, we can use the argument from \cite[Lemma 12, page 800]{Guo2012}; the details are omitted for brevity. 
		If $(g,\psi)$ is another solution to \eqref{1}, \eqref{specular_finite} and \eqref{Neumann_finite} satisfying \eqref{169}, then similar to \eqref{715}, $f-g$ satisfies 
		\begin{align*}
			&\partial_t(f_\pm-g_\pm) + v\cdot \nabla_x(f_\pm-g_\pm)  \pm \frac{1}{2}\nabla_x\psi\cdot v(f_\pm -g_\pm) \pm \frac{1}{2}(\nabla_x\phi-\nabla_x\psi)\cdot vg_\pm\\
			&\quad \mp\nabla_x\psi\cdot\nabla_v(f_\pm -g_\pm)
			\mp  (\nabla_x\phi-\nabla_x\psi)\cdot\nabla_vg_\pm
			\pm (\nabla_x\phi-\nabla_x\psi)\cdot v\mu^{1/2}\\
			&\quad- A_\pm (f-g) = \Gamma_{\pm}(g,f-g)+\Gamma_{\pm}(f-g,g)+K(f-g),
		\end{align*}
	with zero initial data. 
	Applying the similar calculations for deriving \eqref{158a} and noticing the zero initial data, we deduce that $f=g$, by choosing $T_0$ sufficiently small. 
		The proof of Theorem \ref{localsolution} is complete. 	
		\qe\end{proof}

	\medskip
	\noindent {\bf Acknowledgements.} 
	Dingqun Deng was supported by a Direct Grant from BIMSA and YMSC. 
	The research of Renjun Duan was partially supported by the NSFC/RGC Joint Research Scheme (N\_CUHK409/19) from RGC in Hong Kong and a Direct Grant from CUHK.

\end{document}